\documentclass[11pt,reqno]{amsart}
\usepackage{amsmath}

\usepackage{float}
\usepackage{times}
\usepackage[T1]{fontenc}
\usepackage{mathrsfs}
\usepackage{latexsym}
\usepackage[dvips]{graphics}
\usepackage{graphicx}
\usepackage{fancyhdr,amssymb}
\usepackage{epsfig}
\usepackage{systeme}
\usepackage{amsmath,amsfonts,amsthm,amssymb,amscd}
\input amssym.def
\input amssym.tex
\usepackage{color}
\usepackage{hyperref}
\usepackage{url}
\usepackage{breakurl}
\usepackage{tikz}
\newcommand{\bburl}[1]{\textcolor{blue}{\url{#1}}}

\usepackage{comment}
\newcommand{\js}[1]{{#1\overwithdelims () p}}
\newcommand{\zsum}[1]{ \sum_{#1 = 0}^{p-1} }
\newcommand{\osum}[1]{ \sum_{#1 = 1}^{p-1} }

\everymath{\displaystyle}

\usepackage[margin=1in]{geometry}
\usepackage{mathtools}
\usepackage{parskip}[=v1]
\usepackage{indentfirst}
\usepackage{listings}
\lstloadlanguages{[5.2]Mathematica}
\newtheorem{theorem}{Theorem}[section]

\newtheorem{lemma}[theorem]{Lemma}
\newtheorem*{theorem*}{Theorem}
\newtheorem{conjecture}[theorem]{Conjecture}
\newtheorem{remark}[theorem]{Remark}

\newtheorem{definition}[theorem]{Definition}

\setlength{\parindent}{0pt}

\newcommand\be{\begin{equation}}
\newcommand\ee{\end{equation}}
\newcommand\bea{\begin{eqnarray}}
\newcommand\eea{\end{eqnarray}}

\newcommand{\Z}{\ensuremath{\mathbb{Z}}}
\newcommand{\Q}{\mathbb{Q}}

\newcommand{\twocase}[5]{#1 \begin{cases} #2 & \text{{\rm #3}}\\ #4
&\text{{\rm #5}} \end{cases}   }

\newcommand{\threecase}[7]{#1 \begin{cases} #2 & \text{{\rm #3}}\\ #4
&\text{{\rm #5}}\\ #6 & \text{{\rm #7}} \end{cases}   }
\newcommand{\ncr}[2]{{#1 \choose #2}}

\title{Biases in Moments of Dirichlet Coefficients of Elliptic Curve Families}
\date{\today}

\author{Yan (Roger) Weng}
\email{\textcolor{blue}{\href{mailto:Roger\_weng2018@outlook.com}{Roger\_weng2018@outlook.com}}}
\address{AFFILIATION}

%%\subjclass[2010]{05D10, 11B05 (primary),  11Y60, (secondary)}

%%\keywords{Ramsey theory, greedy algorithm, non-commutative groups, quaternions, free groups.}

%%\thanks{The authors were partially supported by NSF grants DMS1347804, DMS1265673 and DMS1561945, and by Williams College.}

\date{\today}

\begin{document}

  \begin{titlepage}
    \centering
    \vspace*{0.6in}
    \bgroup
    \Huge\bfseries Biases in Moments of Dirichlet Coefficients of Elliptic Curve Families\par
    \egroup
    \vspace{0.5in}
    \bgroup
    \Large\bfseries Yan (Roger) Weng\\[0.1in]
    \egroup \large
    Peddie School\\ Hightstown, NJ, USA \par
    \vspace{0.3in}
    \large under the direction of\\
    \vspace{0.3in}
    \bgroup
    \Large\bfseries Steven. J. Miller\\[0.1in]
    \egroup
    \large Williams College\\ Williamstown, MA, USA\par
   \vspace{2in}
   \today
  \end{titlepage}

\begin{abstract} Elliptic curves arise in many important areas of modern number theory. One way to study them is take local data, the number of solutions modulo $p$, and create an $L$-function. The behavior of this global object is related to two of the seven Clay Millenial Problems: the Birch and Swinnerton-Dyer Conjecture and the Generalized Riemann Hypothesis. We study one-parameter families over $\Q(T)$, which are of the form $y^2=x^3+A(T)x+B(T)$, with non-constant $j$-invariant. We define the $r$\textsuperscript{th} moment of an elliptic curve to be $A_{r,E}(p) :=  \frac1{p} \sum_{t \bmod p} a_t(p)^r$, where $a_t(p)$ is $p$ minus the number of solutions to $y^2 = x^3 + A(t)x + B(t) \bmod p$. Rosen and Silverman showed biases in the first moment equal the rank of the Mordell-Weil group of rational solutions.

Michel proved that $pA_{2,E}(p)=p^2+O(p^{3/2})$. Based on several special families where computations can be done in closed form, Miller in his thesis conjectured that the largest lower-order term in the second moment that does not average to $0$ is on average negative. He further showed that such a negative bias has implications in the distribution of zeros of the elliptic curve $L$-function near the central point. To date, evidence for this conjecture is limited to special families. In addition to studying some additional families where the calculations can be done in closed form, we also systematically investigate families of various rank. These are the first general tests of the conjecture; while we cannot in general obtain closed form solutions, we discuss computations which support or contradict the conjecture. We then generalize to higher moments, and see evidence that the bias continues in the even moments.
\end{abstract}

\maketitle

\tableofcontents

%%%%%%%%%%%%%%%%%%%%%%%%%%%%%%%%%%%%%%%%%%%%%%%%%%%%%%%%%%%%%%%%%%%%%%%%%%%%%%%%%%%%%%%%%%%%%%%%%%%%%%%%%%%%%%%%%%%%%%%%%%%%%%%%%%%%%%%%%%%%%%%%%%%%%%%%%%%%%%%%%%%%%%%%%%%%%%
%%%%%%%%%%%%%%%%%%%%%%%%%%%%%%%%%%%%%%%%%%%%%%%%%%%%%%%%%%%%%%%%%%%%%%%%%%%%%%%%%%%%%%%%%%%%%%%%%%%%%%%%%%%%%%%%%%%%%%%%%%%%%%%%%%%%%%%%%%%%%%%%%%%%%%%%%%%%%%%%%%%%%%%%%%%%%%
%%%%%%%%%%%%%%%%%%%%%%%%%%%%%%%%%%%%%%%%%%%%%%%%%%%%%%%%%%%%%%%%%%%%%%%%%%%%%%%%%%%%%%%%%%%%%%%%%%%%%%%%%%%%%%%%%%%%%%%%%%%%%%%%%%%%%%%%%%%%%%%%%%%%%%%%%%%%%%%%%%%%%%%%%%%%%%
%%%%%%%%%%%%%%%%%%%%%%%%%%%%%%%%%%%%%%%%%%%%%%%%%%%%%%%%%%%%%%%%%%%%%%%%%%%%%%%%%%%%%%%%%%%%%%%%%%%%%%%%%%%%%%%%%%%%%%%%%%%%%%%%%%%%%%%%%%%%%%%%%%%%%%%%%%%%%%%%%%%%%%%%%%%%%%
\section{Introduction}

Elliptic curves play a major role in many problems modern number theory for a variety of reasons. First, they are part of a tradition going back thousands of years: counting how many rational points satisfy a polynomial. The best known example of this are the Pythagorean triples: finding integer sides $a, b$ and $c$ of a right triangle require $a^2 + b^2 = c^2$, which is equivalent to finding rational points on the circle $x^2 + y^2 = 1$. A more recent application was the discovery that elliptic curves could be used to create strong crypto-systems.

In this paper we explore some properties related to elliptic curves, in particular to biases in the distribution of the Dirichlet coefficients of the associated $L$-functions. To put our work in perspective, we go back a little over a hundred years to the International Congress of Mathematicians in Paris in 1900. David Hilbert \cite{BY} presented 10 of 23 problems (many of which are still open) that he believed would be great guides for mathematics in the coming years. To start this century, the Clay Mathematics Institute proposed 7 problems \cite{MP}. One has since been solved (the Poincare Conjecture), but the other 6 are open; our work intersects two of these: the Generalized Riemann Hypothesis, and the Birch and Swinnerton-Dyer Conjecture.

We briefly summarize our problem, expanding into greater detail below. An elliptic curve is of the form $y^2 = x^3 + ax + b$ with $a, b\in \Z$, and more generally we can consider one-parameter families $y^2 = x^3 + a(T) x + b(T)$ with $a(T), b(T) \in \Z[T]$. If we specialize $T$ to an integer $t$ we get an elliptic curve $E_t$, and for each prime $p$ we can count how many $(x,y)$ solve $y^2 \equiv x^3 + a(t)x + b(t) \bmod p$. We let $a_t(p)$ be related to this count, and build a function $L(E_t,s) = \sum_n a_t(n) / n^s$ (we define how to get $a_t(n)$ from the prime values later); this is a generalization of the famous Riemann Zeta Function $\zeta(s) = \sum_n 1/n^s$. It turns out that the set of rational solutions to an elliptic curves forms a commutative group, and the Birch and Swinnerton-Dyer Conjecture states the order of vanishing of $L(E_t,s)$ at $s=1$ equals the geometric rank of the group.\footnote{We are using the normalization that the critical strip is $0 < {\rm Re}(s) < 2$, so the central point is $s=1/2$.} Further, the Generalized Riemann Hypothesis states that the non-trivial zeros of $L(E_t,s)$ have real part equal to 1.

It turns out that the moments of the Dirichlet coefficients $a_t(p)$ of the elliptic curve $L$-function are related to these problems. We define the $r$\textsuperscript{th} moment of the one-parameter family $E: y^2 = x^3 + a(T)x + b(T)$ to be \be A_{r,E}(p) \ := \ \frac1{p} \sum_{t \bmod p} a_t(p)^r \ee (we will see later that $a_t(p) = a_{t+\ell p}(p)$, and thus it suffices to sum over $t \bmod p$). By work of Rosen and Silverman \cite{RS} the first moment is related to the rank of the one-parameter family.\footnote{Assuming the Tate Conjecture \cite{Ta}, they prove a conjecture of Nagao \cite{Na} linking these two quantities.} Michel \cite{Mic} proved that if the elliptic curve doesn't have complex multiplication then the second moment is of size $p$, or better $pA_E(2) = p^2$ plus lower order terms of size $p^{3/2}, p, p^{1/2}$ and 1. In his thesis Miller \cite{Mi1, Mi2} discovered that in every family of elliptic curves where $pA_{2,E}(p)$ could be computed in closed form, the first lower order term that did not average to zero had a negative average. This has implications for the distribution of zeros of elliptic curves, connecting our work to the Generalized Riemann Hypothesis. This bias has been seen by many others, both in families of elliptic curves as well as other automorphic forms \cite{Mi1,Mi3}

We continue these investigations in two new directions. The first is prior studies concentrated on specific families where the computations could be done in closed form. Unfortunately, what this means is that perhaps the observed bias is merely a consequence of being such a special family that the computation can be done. While we do compute the bias in closed form for several new families, we also launch the first systematic investigation of general one-parameter families. In some cases the numerics do suggest closed form answers for the second moment's expansion, at least for primes in certain congruence classes. We also explore, for the first time, higher moments, to see if the biases persist.

For the rest of the introduction, after motivating our problem we quickly review some needed facts on elliptic curves; for more detail see \cite{Kn, ST, Ta}. We then describe in detail our results.

%%%%%%%%%%%%%%%%%%%%%%%%%%%%%%%%%%%%%%%%%%%%%%%%%%%%%%%%%%%%%%%%%%%%%%%%%%%%%%%%%
%%%%%%%%%%%%%%%%%%%%%%%%%%%%%%%%%%%%%%%%%%%%%%%%%%%%%%%%%%%%%%%%%%%%%%%%%%%%%%%%%
%%%%%%%%%%%%%%%%%%%%%%%%%%%%%%%%%%%%%%%%%%%%%%%%%%%%%%%%%%%%%%%%%%%%%%%%%%%%%%%%%
\subsection{Pythagorean Triples}

For thousands of years, there have been efforts to find integral and rational points on curves. We start with curves that are easy to analyze and also important: circles. It turns out that the set of rational points on a unit circle is closely related to primitive Pythagorean triples.

The Pythagorean theorem is one of the most important results in geometry, as it allows us to measure the distance between two points. Given any right triangle with sides $a, b$ and hypotenuse $c$, we have $a^2 + b^2 = c^2$; the triple is primitive if $\gcd(a,b,c) = 1$. It is easy to find some solutions, such as $(3,4,5)$ or $(5,12,13)$. Interestingly, one can generate all triples if we can find just one.

\begin{theorem}[Pythagorean triples] For all primitive Pythagorean triples $(a,b,c)$ there exist integers $p$ and $q$ with $p>q>0$ such that $a=2pq, b=p^2-q^2$ and $c=p^2+q^2$.
\end{theorem}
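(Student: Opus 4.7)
The plan is to reduce the classification to a coprime factorization argument. First I would establish the parity restrictions forced by primitivity: $\gcd(a,b,c)=1$ in fact implies that $a,b,c$ are pairwise coprime (any prime dividing two of them would, via $a^2+b^2=c^2$, divide the third). Reducing $a^2+b^2=c^2$ modulo $4$ then rules out both $a$ and $b$ being odd, since odd squares are $1 \pmod 4$ and $c^2 \equiv 2 \pmod 4$ has no solution. So exactly one of $a,b$ is even and $c$ is odd; without loss of generality assume $a$ is even and $b$ is odd.

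Next I would rewrite the defining equation as $a^2 = c^2 - b^2 = (c-b)(c+b)$. Since $b$ and $c$ are both odd, both $c-b$ and $c+b$ are even, so write $c-b = 2u$ and $c+b = 2v$; then $(a/2)^2 = uv$. A short check shows $\gcd(u,v) = 1$: any common prime divisor of $u$ and $v$ would divide $u+v = c$ and $v-u = b$, contradicting $\gcd(b,c)=1$.

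The key step is the elementary lemma that if a product of two coprime positive integers is a perfect square, then each factor is itself a perfect square. Applying this to $uv = (a/2)^2$ gives $u = q^2$ and $v = p^2$ for some integers $p > q > 0$. Substituting back yields $c = u+v = p^2 + q^2$, $b = v-u = p^2 - q^2$, and $a^2 = 4uv = (2pq)^2$, so $a = 2pq$, which is exactly the claimed parametrization.

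The main obstacle is less a computation than a bookkeeping one: the argument only works after one is careful that primitivity propagates from $(a,b,c)$ down to $(u,v)$, and that the parity step genuinely uses the hypothesis (for nonprimitive triples such as $(3,3,\sqrt{18})\cdot\text{scaling}$ the parity dichotomy can fail after rescaling). An equivalent and conceptually cleaner route, which I would mention but not develop in full, is to observe that $(a/c, b/c)$ is a rational point on the unit circle and to use stereographic projection from $(-1,0)$: a line of rational slope $t = q/p$ meets the circle at $(x,y) = \bigl((1-t^2)/(1+t^2),\, 2t/(1+t^2)\bigr)$, and clearing denominators reproduces the formulas $a = 2pq$, $b = p^2-q^2$, $c = p^2+q^2$. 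This second viewpoint also foreshadows the role of rational points on curves that motivates the rest of the paper.
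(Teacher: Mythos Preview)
Your proof is correct and takes a genuinely different route from the paper's. The paper argues geometrically: it identifies primitive triples with rational points on the unit circle $x^2+y^2=1$, then parametrizes those points via lines of rational slope $m$ through $(0,-1)$, obtaining $(2pq)^2 + (p^2-q^2)^2 = (p^2+q^2)^2$ directly from the intersection formulas after writing $m=p/q$. Your main argument is the classical arithmetic one: parity modulo $4$, the factorization $a^2 = (c-b)(c+b) = 4uv$, and the lemma that coprime factors of a perfect square are themselves squares. Your approach is more self-contained and is arguably more careful about why \emph{every} primitive triple arises this way (the paper's sketch does not spell out why clearing denominators in the rational-point parametrization yields a primitive triple rather than a multiple of one). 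The paper's approach, on the other hand, is chosen deliberately to foreshadow the theme of rational points on curves that drives the rest of the paper---a connection you correctly flag in your closing remark, which is essentially the paper's actual proof. One small quibble: your parenthetical about ``$(3,3,\sqrt{18})\cdot\text{scaling}$'' is garbled and should be dropped, but it plays no role in the argument.
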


\begin{proof} Given $a^2 + b^2 = c^2$ we divide by $c^2$. Letting $x = a/c$ and $y=b/c$ we see our problem is equivalent to looking for rational solutions on the circle $x^2+y^2=1$. Note it is easy to find some points on this curve: $(\pm 1, 0)$ or $(0, \pm 1)$. We use the point $(0,-1)$ and draw a line through that point with slope $m$; see Figure \ref{fig:dimensionlinesquareA}.

\begin{figure}[t]
\begin{center}
\scalebox{1}{\includegraphics{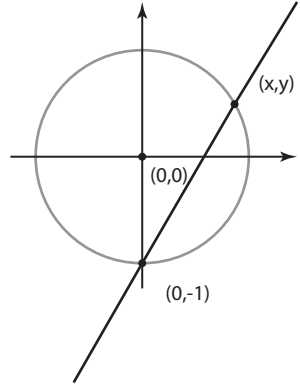}}
\caption{\label{fig:dimensionlinesquareA} A line through $(0,-1)$ with slope $m$, hitting the circle at  $(x,y)$.}
\end{center}
\end{figure}

The equation of our line is $y=mx-1$. It intersects the circle at $(0,-1)$ and one other point, say $(x,y)$. We find that by  simultaneously solving $x^2+y^2=1$ and $y=mx-1$. After some algebra we find \be x \ = \ \frac{2m}{m^2+1}, \ \ \  y \ = \ \frac{m^2-1}{m^2+1}. \ee

Note that $x$ and $y$ are rational numbers if and only if $m$ is a rational number; this is because we are solving $x^2 + (mx-1)^2 = 1$, which is a quadratic with rational coefficients and one root ($x = -1$) rational, forcing the other root to be rational. Thus the point on the unit circle is \be (x,y) \ = \ \left(\frac{2m}{m^2+1}, \frac{m^2-1}{m^2+1}\right).\ee As this is on the circle, its distance from the origin must be 1, and thus we see \be (2m)^2+(m^2-1)^2 \ = \ (m^2+1)^2. \ee

As our slope $m$ is a rational number, we may write $m = p/q$ with $\gcd(p,q) = 1$, and thus find \be (2pq)^2+(p^2-q^2)^2 \  =  \ (p^2+q^2)^2. \ee

We can also argue in the opposite direction; given a point $(x,y)$ on the circle with rational coordinates, the line from $(0, -1)$ to it has rational slope. Thus we see there is a 1-to-1 correspondence between points with rational coordinates and lines with rational slope $m$. \end{proof}

Building on our successful analysis of the circle, we could look at other quadratic forms (such as ellipses or hyperbolas). While there is a lot of interesting mathematics here, such as Pell's equation for certain hyperbola's, we instead move on to the next simplest polynomial to study: cubics.

%%%%%%%%%%%%%%%%%%%%%%%%%%%%%%%%%%%%%%%%%%%%%%%%%%%%%%%%%%%%%%%%%%%%%%%%%%%%%%%%%
%%%%%%%%%%%%%%%%%%%%%%%%%%%%%%%%%%%%%%%%%%%%%%%%%%%%%%%%%%%%%%%%%%%%%%%%%%%%%%%%%
%%%%%%%%%%%%%%%%%%%%%%%%%%%%%%%%%%%%%%%%%%%%%%%%%%%%%%%%%%%%%%%%%%%%%%%%%%%%%%%%%
\subsection{Elliptic Curves Preliminaries}

There are many degree three equations we can study. An elliptic curve is given by
\be y^2+a_1xy+a_3y \ = \ x^3+a_2x^2+a_4x+a_6, \ee with $a_1, a_2, a_3, a_4, a_6\in \mathbb{Z}$; we often impose some conditions so the curve is not degenerate. For example, if we had $y^2 = x^2(x-1)$ we could send $y \mapsto xy$ and ``reduce'' to studying $y^2 = x-1$, a parabola. Instead of having our coefficients in $\Z$ we could have them in a field; other common choices are the rationals $\Q$ as well as the integers modulo $p$. The integers and rationals are of characteristic zero, while the integers modulo $p$ are of characteristic $p$ ($p$ is always a prime unless stated otherwise).

When the elliptic curve is defined over field $K$ whose characteristic is neither 2 nor 3, we can convert the equation to a more useful form, called the Weierstrass form, which is of the form $y^2 = f(x)$ for some degree 3 polynomial $f$.

Consider the elliptic curve $E:y^2+a_1xy+a_3y=x^3+a_2x^2+a_4x+a_6$. Define
\bea b_2 & \ =  \ & a_1^2+4a_2\nonumber\\
b_4 &=& 2a_4+a_1a_3\nonumber\\
b_6 &=&a_3^2+4a_6\nonumber\\
b_8 &=& a_1^2a_6+4a_2a_6-a_1a_3a_4+a_2a_3^2-a_4^2\nonumber\\
c_4&=&b_2^2-24b_4\nonumber\\
c_6&=&-b_2^3+36b_2b_4-216b_6. \eea

The first simplification is that after some algebra we find our curve is equivalent to $y^2=4x^3+b_2x^2+2b_4x+b_6$ when ${\rm char}(K)\ne 2$ (we need to be able to invert 2 in the algebra). The second simplification gives us $y^2=x^3-27c_4x-54c_6$ when ${\rm char}(k)\ne 3$. Thus, frequently we study curves of the form $y^2 = x^3 + ax + b$.

The discriminant of the elliptic curve $y^2 = x^3 + ax + b$ is given by $\Delta=-16(4a^3+27b^2)$. The discriminant is nonzero when $x^3 + ax + b$ has three distinct roots. In order to avoid the cases when the curves degenerate, we impose the condition $4a^3+27b^2\ne 0$. In an elliptic function that has complex multiplication, the endomorphism ring is isomorphic to a subring in an imaginary quadratic field. The $j$-invariant of an elliptic curve is given by $j(E)=\frac{2^8 3^3 a^3}{4a^3+27b^2}$.

We let $E(\Q)$ denote the set of rational points on the elliptic curve $E$. Interestingly, this is not just a set, but also a group. We sketch the proof.

Let $P=(x_1,y_1)$ and $Q=(x_2,y_2)$ be two distinct points in $E(\Q)$ (if they are the same point we have to modify the construction slightly). Write the line connecting them by $y=\alpha x+\beta$; as our points have rational coordinates, $\alpha$ and $\beta$ are rational. The line intersects the elliptic curve at another point, unless the line is tangent to the curve; see Figure \ref{fig:dimensionlinesquareB}.

\begin{figure}[ht]
\begin{center}
\scalebox{1}{\includegraphics{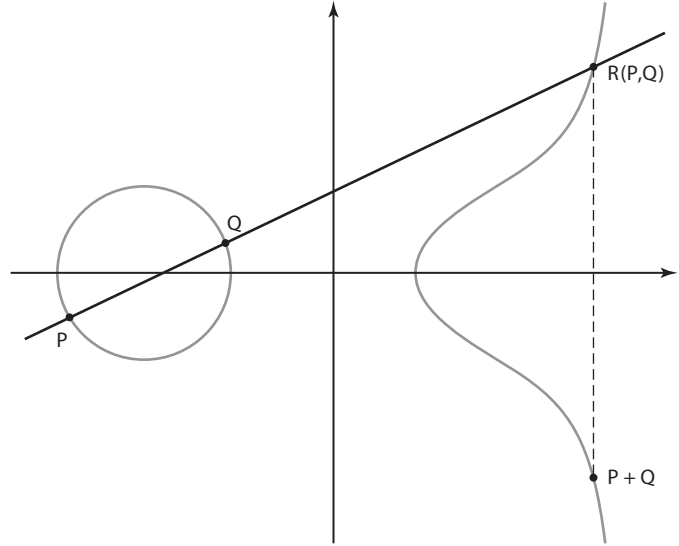}}
\caption{\label{fig:dimensionlinesquareB} The addition law of the elliptic curve.}
\end{center}
\end{figure}

To find the third point where the line hits the cubic, we substitute for $y$ with the equation of the line, and find
$x^3+ax+b=(\alpha x+\beta)^2$. As this cubic has rational coefficients and two rational roots ($x = x_1$ or $x_2$), the third root is also rational. Let $(x_3,y_3)$ be this point. We define $P+Q$ to be $(x_3,-y_3)$, which is the reflection of $(x_3,y_3)$ about the $x$-axis. With this definition, it turns out that the set of rational points forms a group. It is easy to see that it is a commutative group due to the geometric nature of the construction (the line through $P$ and $Q$ is the same as the line through $Q$ and $P$). Interestingly, what is hard is showing associativity: $(P+Q)+ R = P + (Q+R)$; this can be done through tedious algebra, or by appealing to high level results (the Rieman-Roch Theorem).

Now that we know $E(Q)$ is a group, it is natural to ask what is its structure. By the Mordell-Weil Theorem we can write it as $\Z^{r_g} \oplus \mathbb{T}_E$; here $r_g$ is the geometric rank of the group, and $\mathbb{T}_E$ is a finite torsion group. Mazur \cite{Ma} proved that there are only 15 possibilities for the torsion group: if $C_k$ is the cyclic group with $k$ elements, the possibilities are $C_n$ for $n \in \{1, \dots, 10, 12\}$ and the direct sum of $C_2$ with $C_2$, $C_4$, $C_6$ or $C_8$.

We end with a brief remark about why we care about points on rational curves. For years RSA was the gold standard for encryption, and was based on working with $(\Z/pq\Z)^\ast$ with $p, q$ distinct primes. The hope, which was realized through elliptic curves, was that using a more complicated group could lead to more secure encryption. It is thus an important question to know what types of groups can arise. To date the largest observed rank is 28, and some recent conjectures suggest (contrary to the initial thoughts) that the maximum rank might be bounded.

%%%%%%%%%%%%%%%%%%%%%%%%%%%%%%%%%%%%%%%%%%%%%%%%%%%%%%%%%%%%%%%%%%%%%%%%%%%%%%%%%
%%%%%%%%%%%%%%%%%%%%%%%%%%%%%%%%%%%%%%%%%%%%%%%%%%%%%%%%%%%%%%%%%%%%%%%%%%%%%%%%%
%%%%%%%%%%%%%%%%%%%%%%%%%%%%%%%%%%%%%%%%%%%%%%%%%%%%%%%%%%%%%%%%%%%%%%%%%%%%%%%%%
\subsection{Elliptic Curve $L$-function}

We now introduce the $L$-function of an elliptic curve, and discuss its connections with the geometric rank. The basic idea is that we use local information to build a global object, and then use the global object to get information about the local inputs. We illustrate this with two well-known example: Binet's formula for the Fibonacci numbers, and the Riemann zeta function.

%%%%%%%%%%%%%%%%%%%%%%%%%%%%%%%%%%%%%%%%%%%%%%%%%%
%%%%%%%%%%%%%%%%%%%%%%%%%%%%%%%%%%%%%%%%%%%%%%%%%%
%%%%%%%%%%%%%%%%%%%%%%%%%%%%%%%%%%%%%%%%%%%%%%%%%%
\subsubsection{Generating Functions and Fibonacci Numbers}

For the Fibonacci numbers $F_{n+1} = F_n + F_{n-1}$ (with initial conditions $F_0 = 0, F_1 = 1$), we use the recurrence relation to build a generating function, and then use the closed form expression of the generating function to get Binet's formula for individual Fibonacci numbers.

\begin{theorem}[Binet's Formula] We have
\be F_n \ = \ \frac{1}{\sqrt{5}}\left[\left(\frac{1+\sqrt{5}}{2}\right)^n-\left(\frac{-1+\sqrt{5}}{2}\right)^n\right]. \ee
\end{theorem}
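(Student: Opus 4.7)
The plan is the standard generating-function argument foreshadowed in the paragraph preceding the theorem: package the whole sequence $\{F_n\}$ into one analytic object, find a closed form for it, and then read off the individual coefficients. Concretely, I would define $G(x) := \sum_{n \geq 0} F_n x^n$, which converges in a neighborhood of the origin since $F_n$ grows at most exponentially (indeed, one could bound $F_n \leq 2^n$ by induction, but this is not needed for the formal manipulations).

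The first computation uses the recurrence $F_{n+1} = F_n + F_{n-1}$ to extract a functional equation for $G$. Multiplying the recurrence by $x^{n+1}$, summing over $n \geq 1$, and using $F_0 = 0$, $F_1 = 1$ to handle the boundary terms yields $G(x) - x = x G(x) + x^2 G(x)$, which rearranges to
\[ G(x) \ = \ \frac{x}{1 - x - x^2}. \]
The denominator factors over $\Q(\sqrt{5})$: writing $1 - x - x^2 = (1 - \phi x)(1 - \psi x)$ where $\phi = (1+\sqrt{5})/2$ and $\psi = (1-\sqrt{5})/2$ are the two roots of $t^2 = t + 1$, a partial fraction decomposition gives
\[ G(x) \ = \ \frac{1}{\sqrt{5}} \left( \frac{1}{1 - \phi x} - \frac{1}{1 - \psi x} \right), \]
where the factor of $1/\sqrt{5}$ is forced by $\phi - \psi = \sqrt{5}$.

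Expanding each term as a geometric series and comparing coefficients of $x^n$ produces $F_n = (\phi^n - \psi^n)/\sqrt{5}$, which up to standard sign conventions on the two roots is the stated formula. There is no genuine obstacle here---every step is a routine algebraic manipulation. The one spot that requires a bit of care is bookkeeping between the factorization $1 - x - x^2 = (1 - \phi x)(1 - \psi x)$ and the characteristic polynomial $t^2 - t - 1$ of the recurrence, since the roots of these two polynomials are reciprocals of each other, and it is easy to introduce a spurious sign. As a sanity check I would verify the formula for $n = 0, 1, 2$ directly before declaring victory. An alternative route, worth mentioning, bypasses generating functions entirely: the theory of linear recurrences guarantees any solution has the form $F_n = A \phi^n + B \psi^n$, and then $F_0 = 0, F_1 = 1$ pin down $A = 1/\sqrt{5}$, $B = -1/\sqrt{5}$ by a $2 \times 2$ linear system. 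Both approaches arrive at the same identity, but the generating-function derivation is the one that generalizes to the Dirichlet-series and $L$-function setting motivating the rest of the paper.
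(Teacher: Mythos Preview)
Your proposal is correct and follows essentially the same generating-function approach as the paper: derive $G(x)=x/(1-x-x^2)$ from the recurrence, partial-fraction over $\Q(\sqrt{5})$, and expand each piece as a geometric series. Your added remarks on convergence, the sanity check at small $n$, and the alternative linear-recurrence route are welcome but not in the paper's version; you have also correctly flagged the sign discrepancy between $\psi=(1-\sqrt{5})/2$ and the $(-1+\sqrt{5})/2$ appearing in the stated formula.
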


\begin{proof} We define the generating function by \be g(x)\ = \ \sum_{n>0} F_n x^n. \ee Using the recurrence relation and some algebra, we find
\bea \sum_{n\ge2} F_{n+1} x^{n+1} & \  = \ & \sum_{n\ge2} F_n x^{n+1}+\sum_{n\ge2} F_{n-1} x^{n+1} \nonumber\\
\sum_{n\ge3} F_{n} x^{n} &=&\sum_{n\ge2} F_n x^{n+1}+\sum_{n\ge1} F_{n} x^{n+2} \nonumber\\
\sum_{n\ge3} F_{n} x^{n} &=&x\cdot \sum_{n\ge2} F_n x^{n}+x^2\cdot \sum_{n\ge1} F_{n} x^{n}\nonumber\\
g(x)-F_1x-F_2x^2&=&x(g(x)-F_1x)+x^2g(x)\nonumber\\
g(x)&=&\frac{x}{1-x-x^2}. \eea

By partial fraction expansion,
\be g(x) \ = \ \frac{x}{1-x-x^2} \ = \ \frac{1}{\sqrt{5}}\left[\frac{(\frac{1+\sqrt{5}}{2})x}{1-(\frac{1+\sqrt{5}}{2})x}-\frac{(\frac{-1+\sqrt{5}}{2})x}{1-(\frac{-1+\sqrt{5}}{2})x}\right], \ee and then using the geometric series formula to expand each denominator yields the coefficient of $x^n$ is \be F_n\ = \ \frac{1}{\sqrt{5}}\left[\left(\frac{1+\sqrt{5}}{2}\right)^n-\left(\frac{-1+\sqrt{5}}{2}\right)^n\right]. \ee \end{proof}

The reason we are able to obtain a closed form expression for the individual Fibonacci numbers is that we have a closed form expression for the global object, the generating function. Before giving the elliptic curve $L$-function we first discuss the most basic example, the Riemann zeta function: \be \zeta(s) \ := \ \sum_{n=1}^\infty \frac1{n^s}, \ee which converges for ${\rm Re}(s) > 1$.

%%%%%%%%%%%%%%%%%%%%%%%%%%%%%%%%%%%%%%%%%%%%%%%%%%
%%%%%%%%%%%%%%%%%%%%%%%%%%%%%%%%%%%%%%%%%%%%%%%%%%
%%%%%%%%%%%%%%%%%%%%%%%%%%%%%%%%%%%%%%%%%%%%%%%%%%
\subsubsection{Riemann Zeta Function}
\begin{theorem}[Euler product formula for the Riemann zeta function] For ${\rm Re}(s) > 1$ we have
\be \zeta(s) \ =\ \sum_{n=1}^{\infty} \frac{1}{n^s} \ =\ \prod_{p \ {\rm prime}} \left(1-\frac{1}{p^s}\right)^{-1}. \ee
\end{theorem}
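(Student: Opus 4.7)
The plan is to exploit the unique factorization of positive integers into primes, together with the geometric series formula applied to each local factor. First I would observe that for $\mathrm{Re}(s) > 1$ and any prime $p$ we have $|p^{-s}| = p^{-\mathrm{Re}(s)} < 1$, so the geometric series gives
\be \left(1-\frac{1}{p^s}\right)^{-1} \ =\ \sum_{k=0}^{\infty} \frac{1}{p^{ks}}. \ee
This rewrites each factor in the Euler product as an absolutely convergent series over prime powers, which is the bridge between the product and the sum.

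Next I would take a finite truncation and multiply. For a fixed $N$, form the finite product over primes $p \le N$. Expanding it and collecting terms, the general summand is $1/(p_1^{k_1}\cdots p_r^{k_r})^s$ where $p_1,\dots,p_r$ are the primes up to $N$ and the $k_i \ge 0$. By the Fundamental Theorem of Arithmetic, as the exponents $k_i$ range over all nonnegative integers these denominators run (exactly once each) over all positive integers whose prime factors are all at most $N$. Hence
\be \prod_{p \le N} \left(1-\frac{1}{p^s}\right)^{-1} \ =\ \sum_{\substack{n \ge 1 \\ p \mid n \Rightarrow p \le N}} \frac{1}{n^s}. \ee

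Finally I would pass to the limit $N \to \infty$. Every positive integer $n$ has only finitely many prime divisors, so $n$ is eventually included on the right-hand side; thus the right-hand side converges to $\sum_{n\ge 1} 1/n^s = \zeta(s)$. Concretely, the difference between $\zeta(s)$ and the partial product is bounded by $\sum_{n > N} 1/n^{\mathrm{Re}(s)}$, which tends to $0$ since the zeta series converges absolutely for $\mathrm{Re}(s) > 1$. This absolute convergence also legitimizes the rearrangement used when expanding the finite product.

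The main obstacle, and where one needs to be careful rather than clever, is precisely this interchange: one must justify that expanding the (eventually infinite) product into a single sum produces each reciprocal power $1/n^s$ exactly once, and that the resulting sum may be reordered freely. Both are handled by absolute convergence in $\mathrm{Re}(s) > 1$ combined with unique factorization, so the ``hard'' step is really a careful bookkeeping argument rather than a new idea.
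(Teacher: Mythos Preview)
Your proof is correct and follows essentially the same approach as the paper: expand each local factor as a geometric series and invoke unique factorization to match terms with $\sum_n 1/n^s$. The only difference is that you carry out the convergence justification (via the finite truncation over $p \le N$ and absolute convergence) explicitly, whereas the paper deliberately sketches the argument and defers those details.
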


\begin{proof} We sketch the argument, not worrying about convergence (which can be justified for such $s$); for details see for example \cite{MT-B}.
\bea \prod_{p\ {\rm prime}} \left(1-\frac{1}{p^s}\right)^{-1} & \ = & \left(1-\frac{1}{2^s}\right)^{-1} \left(1-\frac{1}{3^s}\right)^{-1} \left(1-\frac{1}{5^s}\right)^{-1}\cdots \nonumber\\
&=& \left(1+\frac{1}{2^s}+\frac{1}{4^s}\cdots\right) \left(1+\frac{1}{3^s}+\frac{1}{9^s}\cdots\right) \left(1+\frac{1}{5^s}+\frac{1}{25^s}\cdots\right)\cdots.\eea

By the Fundamental Theorem of Arithmetic, every positive integer can be written uniquely as a product of prime powers (with the primes in increasing order). Thus
\be \prod_{p\ {\rm prime}} \left(1-\frac{1}{p^s}\right)^{-1} \ = \ 1+\frac{1}{2^s}+\frac{1}{3^s}+\frac{1}{4^s}+\frac{1}{5^s}\cdots \ =\ \sum_{n=1}^{\infty} \frac{1}{n^s}. \ee
\end{proof}

The Riemann Zeta Function is useful as connects the integers and the primes. For example, as we take the limit as $s$ approaches 1 the sum-definition of the Riemann zeta function tends to the harmonic series, which diverges. Because this must equal the Euler product, we conclude that there are infinitely many primes (as otherwise the product would stay finite). With a little more work, we can also show that $\sum_{p} 1/p$ diverges and even obtain a growth rate on $\sum_{p \le x} 1/p$ from knowing $\sum_{n \le x} 1/n$ is $\log(x)$ plus lower order terms. Thus the primes are far more numerous than the perfect squares (whose reciprocal sum converges). This is just the start; by appealing to complex analysis (specifically contour integration) we obtain formulas connecting the number of primes up to $x$ with the distribution of zeros and poles of $\zeta(s)$ (though initially defined just for ${\rm Re}(s) > 1$, $\zeta(s)$ can be continued to be analytic everywhere save for a pole of residue 1 at $s=1$).

%%%%%%%%%%%%%%%%%%%%%%%%%%%%%%%%%%%%%%%%%%%%%%%%%%
%%%%%%%%%%%%%%%%%%%%%%%%%%%%%%%%%%%%%%%%%%%%%%%%%%
%%%%%%%%%%%%%%%%%%%%%%%%%%%%%%%%%%%%%%%%%%%%%%%%%%
\subsubsection{Linear and Quadratic Legendre Sums}

Before we can define the elliptic curve $L$-function, we first need to introduce some terminology; we will also state some relations that play a central role in our studies.

\begin{definition}[Legendre symbol]
Let $p$ be an odd prime number, and let $a$ be an integer. We say $a$ is a quadratic residue modulo $p$ if it is congruent to a non-zero perfect square modulo $p$ and is a non-quadratic residue modulo $p$ otherwise. We define the Legendre symbol $\js{a}$ by \be \threecase{\js{a} \ =\ }{1}{if $a$ is a quadratic residue modulo $p$}{-1}{if $a$ is a non-quadratic residue modulo $p$}{0}{if $a \equiv 0$ modulo $p$.} \ee
\end{definition}

In the sums below, we write $\sum_{x (p)}$ to mean a sum over $x$ modulo $p$: thus it is $\sum_{x=0}^{p-1}$.

\begin{lemma}[Linear Legendre Sums] We have
\be \sum_{x(p)} \js{ax+b} \ =\ 0 \ee if $p \nmid a$; if $p|a$ the sum is just $p \js{b}$.
\end{lemma}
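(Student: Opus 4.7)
The plan is to split into the two cases announced by the statement. When $p \mid a$, we have $ax+b \equiv b \pmod{p}$ for every $x$, so every term of the sum equals $\js{b}$ and the sum is $p\js{b}$; this handles the second half immediately with no further work.

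For the main case $p \nmid a$, the first step is to note that the map $x \mapsto ax+b$ is a bijection on $\Z/p\Z$. This is because $a$ is invertible modulo $p$, so multiplication by $a$ permutes the residues, and translation by $b$ is also a permutation; the composition is a permutation. Consequently, as $x$ ranges over a complete residue system modulo $p$, so does $y := ax+b$, and therefore
\begin{equation}
\sum_{x(p)} \js{ax+b} \ = \ \sum_{y(p)} \js{y}.
\end{equation}

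The second step is to evaluate $\sum_{y(p)} \js{y}$. Since $\js{0}=0$, this reduces to a sum over $y \in \{1, 2, \ldots, p-1\}$. Here the key input is that in $(\Z/p\Z)^*$ the squaring map $y \mapsto y^2$ is two-to-one (its kernel is $\{\pm 1\}$), so exactly $(p-1)/2$ of the nonzero residues are quadratic residues and the remaining $(p-1)/2$ are non-residues. Therefore the sum contributes $(p-1)/2$ copies of $+1$ and $(p-1)/2$ copies of $-1$, giving $0$.

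There is no real obstacle here; both halves are essentially one-line observations once we recognize that an invertible affine map permutes residues modulo $p$ and that the Legendre symbol is balanced on $(\Z/p\Z)^*$. The only subtlety worth being careful about is ensuring that the sum over $x(p)$ really is interpreted as a sum over a full set of residues modulo $p$ (which was made explicit just above the statement), so that the change of variables $y=ax+b$ is legitimate.
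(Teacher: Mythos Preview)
Your proof is correct and follows essentially the same approach as the paper: reduce to $\sum_{y(p)} \js{y}$ via a change of variables and use that this sum vanishes. The only cosmetic difference is that the paper performs the substitution in two steps (first $x \mapsto a^{-1}x$, then a shift by $b$) while you do it in one by observing that the affine map $x \mapsto ax+b$ is a bijection; your version also spells out why $\sum_{y(p)}\js{y}=0$, which the paper simply asserts.
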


The proof is given in Appendix \ref{sec:linquadsums}.

\begin{lemma}[Quadratic Legendre Sums] Assume $a \not\equiv 0 \pmod p$. Then
\be \twocase{\sum_{x(p)} \js{ax^2+bx+c} \ = \ }{-\js{a}}{ if $p\nmid b^2-4ac$}{(p-1)\js{a}}{ if $p\mid b^2-4ac$.}\ee
\end{lemma}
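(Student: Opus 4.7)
The plan is to reduce the quadratic Legendre sum to a sum of the form $\sum_y \js{y^2 - D}$ by completing the square, and then evaluate that sum by a counting argument. Since $p$ is odd and $p \nmid a$, we have $p \nmid 4a$, so multiplicativity of the Legendre symbol gives
\be \js{ax^2 + bx + c} \ = \ \js{4a}\js{4a(ax^2+bx+c)} \ = \ \js{4a}\js{(2ax+b)^2 - (b^2-4ac)}. \ee
Because $\gcd(2a,p) = 1$, the map $x \mapsto 2ax+b$ is a bijection on $\Z/p\Z$, so I can replace the sum over $x$ by a sum over $y = 2ax+b$. Letting $D = b^2 - 4ac$, the problem collapses to evaluating $S(D) := \sum_{y(p)} \js{y^2 - D}$, with an outer factor of $\js{4a} = \js{a}$ (as $\js{4} = 1$).

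The easy case is $p \mid D$. Then $\js{y^2 - D} = \js{y^2}$, which is $1$ for $y \not\equiv 0$ and $0$ for $y \equiv 0$, so $S(D) = p-1$, giving the claimed $(p-1)\js{a}$.

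For $p \nmid D$, the plan is to show $S(D) = -1$ by a direct count. Let $N_+, N_-, N_0$ denote the number of $y \bmod p$ for which $y^2 - D$ is a nonzero square, a non-square, or zero, respectively. Then $S(D) = N_+ - N_-$ and $N_+ + N_- + N_0 = p$. I would compute $N_+$ by counting pairs $(y,z) \bmod p$ with $y^2 - z^2 = D$: each $y$ contributing to $N_+$ yields two nonzero $z$'s, while $z = 0$ occurs exactly when $y^2 = D$, so the pair count equals $2N_+ + N_0$. Factoring $y^2 - z^2 = (y-z)(y+z)$ and substituting $u = y-z$, $v = y+z$ (invertible since $p$ is odd) turns the condition into $uv = D$ with $u, v \neq 0$, which has exactly $p-1$ solutions. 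Hence $2N_+ + N_0 = p-1$, and whether $D$ is a QR (so $N_0 = 2$) or a non-QR (so $N_0 = 0$), a short arithmetic check gives $N_+ - N_- = -1$ in both cases.

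I don't anticipate a real obstacle; the delicate point is just to verify the final arithmetic in both QR/non-QR subcases and to track the factor $\js{4a} = \js{a}$ that gets pulled outside at the start. The only place where the assumption $p$ odd and $p \nmid a$ is truly used is in ensuring that $4a$ is invertible so that completing the square, the bijection $y = 2ax+b$, and the substitution $u = y-z, v = y+z$ all make sense.
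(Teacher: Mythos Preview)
Your proof is correct, and the reduction step (complete the square, pull out $\js{a}$, substitute to reach $\sum_y \js{y^2 - D}$) matches the paper's. The paper handles the case $p \nmid D$ differently, though: when $D$ is a nonzero square $\eta^2$ it invokes the prior lemma $\sum_t \js{t-\eta}\js{t+\eta} = -1$, and for non-square $D$ it argues via a generator substitution that all non-squares give the same value $S$, then solves for $S$ using the double sum $\sum_{\delta}\sum_t \js{t^2 - \delta} = 0$. Your counting argument via $y^2 - z^2 = uv = D$ is more self-contained and handles the QR and non-QR subcases uniformly, at the cost of a small case check at the end; the paper's route reuses its earlier lemma and avoids the explicit $N_+, N_-, N_0$ bookkeeping. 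Either way the substance is the same, and your arithmetic in both subcases is right.
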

The proof is given in Appendix \ref{sec:linquadsums}.
\\

Unfortunately, while we have great success with linear and quadratic legendre sums, there is no closed form solution for cubic and higher (for more on these and related sums, see \cite{BEW}). Our inability to handle in particular cubic sums makes the analysis of elliptic curves exceedingly difficult.

%%%%%%%%%%%%%%%%%%%%%%%%%%%%%%%%%%%%%%%%%%%%%%%%%%
%%%%%%%%%%%%%%%%%%%%%%%%%%%%%%%%%%%%%%%%%%%%%%%%%%
%%%%%%%%%%%%%%%%%%%%%%%%%%%%%%%%%%%%%%%%%%%%%%%%%%
\subsubsection{Elliptic Curve $L$-functions}

Consider an elliptic curve $E: y^2 = x^3 + ax + b$. Let us look for pairs $(x,y)$ that solve this modulo $p$. If $x^3 + ax + b$ is a non-zero square modulo $p$ then there are two distinct $y$ that work for this $x$, if it is zero then only one $y$ works, while if it is not a square modulo $p$ no $y$ work. Thus the number of solutions is \be \sum_{x(p)} \left[\js{x^3+ax+b} + 1\right] \ = \ p + \sum_{x(p)} \js{x^3+ax+b}. \ee We define $a_E(p)$ to be $p$ minus the number of solutions mod p; Thus
\be a_E(p) \ := \ -\sum_{x(p)} \js{x^3+ax+b}.\ee Note that if we assume the $x^3+ax+b$ are as likely to be a quadratic residue as a non-residue, we would expect the number of solutions to be around $p$. Thus $a_E(p)$ measures the fluctuations in the number of solutions from the expected number. Hasse proved that $|a_E(p)| < 2 \sqrt{p}$.

\begin{remark}[Philosophy of Square-Root Cancellation]
The cancellation in Hasse's theorem is an example of the \emph{Philosophy of Square-Root Cancellation}: if you have a sum of $N$ signed terms all of order $B$, then we expect the size of the sum to be of order $B \sqrt{N}$ (up to sat a few powers of $\log N$). Perhaps the most famous example of this is the Central Limit Theorem: if we toss a fair coin $N$ times and record 1 for each head and -1 for each tail, we expect the sum to be zero with fluctuations of size $\sqrt{N}$. For elliptic curves, we have a sum of $p$ signed terms, each of size 1; the resulting sum is of size $\sqrt{p}$. \end{remark}

We now use this local data over the primes to build an elliptic curve $L$-function through an Euler product. We sketch the construction; in particular, we do not go into details on the conductor of the elliptic curve, $N_E$. For our purposes it is enough to know that it is an integer and $\chi_{N_E}(p) = 0$ if $p|N_E$ and 1 otherwise. The $L$-function of the elliptic curve $E$ is \be L(E,s) \ = \ \prod_p \left(1-\frac{a_E(p)}{p^{s}}+ \frac{\chi_{N_E}(p)}{p^{2s-1}}\right)^{-1}.\ee

The next conjecture, one of the seven Clay Millenial Problems,  gives one of the most important applications of the above.

\begin{conjecture}[Birch and Swinnerton-Dyer] The Taylor expansion of $L(E,s)$ at $s=1$ has the form $L(E,s)=c(s-1)^k + higher$ $order$ $terms$,
with $c\neq 0$ and $k$ the geometric rank of the group of rational solutions $E(\Q)$.
\end{conjecture}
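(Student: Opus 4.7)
The statement here is the Birch and Swinnerton-Dyer Conjecture itself, one of the seven Clay Millennium Problems, so honesty demands that any proof proposal begin by conceding that no full proof is known and that what follows is only a roadmap for extending the partial progress that does exist. Before even beginning I would fix notation: by $r_{\mathrm{an}}$ I mean the order of vanishing of $L(E,s)$ at $s=1$, and by $r_g$ the Mordell--Weil rank from the previous section. The goal is to prove $r_{\mathrm{an}} = r_g$.

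The first step would be to invoke modularity, due to Wiles, Taylor--Wiles, and Breuil--Conrad--Diamond--Taylor, so that every elliptic curve $E/\Q$ is attached to a weight-$2$ cuspidal newform $f_E$ of level $N_E$ with $L(E,s) = L(f_E,s)$. This secures analytic continuation of $L(E,s)$ to all of $\C$ and a functional equation relating $s$ to $2-s$, which is already nontrivial since the Euler product only converges for $\mathrm{Re}(s)>3/2$; without analytic continuation the phrase ``order of vanishing at $s=1$'' does not even make sense. From here I would split the argument by analytic rank.

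For $r_{\mathrm{an}} \in \{0,1\}$ the established strategy is to combine the Gross--Zagier formula, which relates $L'(E,1)$ to the N\'eron--Tate height of a Heegner point $y_K$ on $E$ coming from a CM point on $X_0(N_E)$ over an auxiliary imaginary quadratic field $K$ satisfying the Heegner hypothesis, with Kolyvagin's Euler system of derivative classes, which bounds the Selmer group and hence the Mordell--Weil rank from above by the analytic rank. I would then upgrade from the rank statement to the refined BSD formula (including the $p$-part of the Tate--Shafarevich group) using Iwasawa-theoretic input: Kato's Euler system on the one side and Skinner--Urban's proof of the relevant Iwasawa main conjecture on the other, at primes of ordinary reduction. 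Rubin's work handles the CM case separately. This is the plan for ranks $0$ and $1$, and here one really can follow it.

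The main obstacle, and the reason this cannot be a complete proof, is the case $r_{\mathrm{an}} \geq 2$. No construction is known that produces enough independent rational points (or nontrivial Selmer classes detecting them) to force the Mordell--Weil rank to match a prescribed higher-order vanishing. The natural candidates are higher Heegner cycles on Kuga--Sato varieties, generalized Heegner cycles of Bertolini--Darmon--Prasanna, and diagonal cycles on triple products of modular curves, together with their associated Euler systems; I would pursue whether any of these can be arranged to detect second-order vanishing. But every attack along these lines currently stalls at exactly the point one needs it to succeed, and I do not expect a principled breakthrough without genuinely new ideas — so the proposal honestly terminates with a conjectural sketch for $r_{\mathrm{an}} \geq 2$ rather than a proof.
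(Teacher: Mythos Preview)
The paper does not prove this statement at all: it is explicitly labeled a \emph{conjecture} and presented as one of the Clay Millennium Problems, with no proof attempted. There is therefore no ``paper's own proof'' to compare your proposal against.

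Your write-up correctly and honestly acknowledges this from the outset, and what you have given is not a proof but a competent survey of the known partial results (modularity, Gross--Zagier, Kolyvagin, Kato, Skinner--Urban) together with a frank admission that the case $r_{\mathrm{an}} \geq 2$ is wide open. That is the appropriate response to being asked to prove an open Millennium Problem. Just be aware that in the context of this paper the statement functions purely as background motivation for studying moments of Dirichlet coefficients, not as something the author claims or attempts to establish; a one-line remark that the conjecture is open would have sufficed here.
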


In other words, the Birch and Swinnerton-Dyer Conjecture tells us that the order of vanishing of $L(E,s)$ at $s=1$ (the analytic rank) equals the geometric rank of the elliptic curve.

%%%%%%%%%%%%%%%%%%%%%%%%%%%%%%%%%%%%%%%%%%%%%%%%%%%%%%%%%%%%%%%%%%%%%%%%%%%%%%%%%%%%%%%%%%%%%%%%%%%%
%%%%%%%%%%%%%%%%%%%%%%%%%%%%%%%%%%%%%%%%%%%%%%%%%%%%%%%%%%%%%%%%%%%%%%%%%%%%%%%%%%%%%%%%%%%%%%%%%%%%
%%%%%%%%%%%%%%%%%%%%%%%%%%%%%%%%%%%%%%%%%%%%%%%%%%%%%%%%%%%%%%%%%%%%%%%%%%%%%%%%%%%%%%%%%%%%%%%%%%%%
\subsection{Moments of the Dirichlet Coefficients of the Elliptic Curve $L$-functions}

We call the $a_E(p)$ the Dirichlet Coefficients of the elliptic curve $L$-function; if we have a one-parameter family we write $a_t(p)$ for $a_{E_t}(p)$. We study how these vary in a family.

Consider the one-parameter family of elliptic curves over $\Q(T)$ given by $E: y^2=x^3+a(T)x+b(T)$ where $a(T), b(T)$ are polynomials in $\mathbb{Z}[T]$; we often call $E$ an elliptic surface. The $r$-th moment of the Dirichlet coefficients is \be A_{r,E}(p) \ = \ \frac{1}{p}\sum_{t \bmod p} a_{t}(p)^r. \ee

%%%%%%%%%%%%%%%%%%%%%%%%%%%%%%%%%%%%%%%%%%%%%%%%%%
%%%%%%%%%%%%%%%%%%%%%%%%%%%%%%%%%%%%%%%%%%%%%%%%%%
%%%%%%%%%%%%%%%%%%%%%%%%%%%%%%%%%%%%%%%%%%%%%%%%%%
\subsubsection{First Moment and Rank}
The first moment is \be A_{1,E}(p) \ = \ \frac{1}{p}\sum_{t \bmod p} a_{t}(p).\ee

\begin{theorem}[Rosen-Silverman] For an elliptic surface $E$ (a one-parameter family over $\Q(T)$),
\be
\lim_{x\to \infty}\frac{1}{x}\sum_{p\le x}\frac{A_{1,E}(p)\log p}{p} \ = \ {\rm rank}(E),
\ee where ${\rm rank}(E)$ is the rank of the group of rational points of $E$ over $\Q(T)$.
\end{theorem}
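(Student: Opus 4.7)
The plan is to follow the Rosen--Silverman strategy: convert the prime-by-prime average $A_{1,E}(p)$ into local data on the elliptic surface $\pi:\mathcal{E}\to\mathbb{P}^1$ attached to the family, invoke the Tate conjecture to identify a pole of an associated $L$-function with the N\'eron--Severi rank of $\mathcal{E}$, peel off the Mordell--Weil rank using the Shioda--Tate formula, and finish with a Tauberian step to recover a prime average.

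First I would reinterpret $p A_{1,E}(p) = \sum_{t\bmod p} a_t(p)$ cohomologically. For primes $p$ of good reduction for $\mathcal{E}$, the Grothendieck--Lefschetz trace formula expresses $\#\mathcal{E}(\mathbb{F}_p)$ as an alternating sum of Frobenius traces on $\ell$-adic \'etale cohomology, and the Leray spectral sequence for $\pi$ isolates the piece $H^1(\mathbb{P}^1, R^1\pi_*\mathbb{Q}_\ell)$ whose Frobenius trace equals (essentially) $-\sum_t a_t(p)$. The other cohomological pieces contribute the ``trivial'' classes coming from the generic fiber, the zero section, and the irreducible components of singular fibers; these each contribute a factor of $p$ to the local $L$-factor at $s=2$ and must be carefully tracked. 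Assembling the local factors yields the global $L$-function $L(H^2(\mathcal{E}),s)$, whose Dirichlet coefficients on the ``non-trivial'' piece encode the quantities $A_{1,E}(p)$.

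Next, by the Tate conjecture, the order of the pole of $L(H^2(\mathcal{E}),s)$ at $s=2$ equals the N\'eron--Severi rank $\rho(\mathcal{E})$. The Shioda--Tate formula
\[
\rho(\mathcal{E}) \ = \ \text{rank}(E) + 2 + \sum_{v}(m_v-1),
\]
with $m_v$ the number of irreducible components in the fiber over a place $v$ of bad reduction, lets me subtract off the ``trivial'' contributions that correspond to the cohomology pieces already accounted for. What remains is a new $L$-factor $L_{\text{new}}(s)$ whose pole at $s=2$ has order exactly $\text{rank}(E)$. Finally, a Tauberian theorem of Wiener--Ikehara type, applied to $-L'_{\text{new}}/L_{\text{new}}$, translates this pole order into the prime-averaged statement
\[
\lim_{x\to\infty}\frac{1}{x}\sum_{p\le x}\frac{A_{1,E}(p)\log p}{p} \ = \ \text{rank}(E).
\]
The main obstacle is conceptual rather than computational: the Tate conjecture is open for elliptic surfaces over $\mathbb{Q}$ in general, so the theorem is only conditional. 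A secondary technical difficulty is verifying enough meromorphic continuation and boundary regularity of $L_{\text{new}}$ on $\text{Re}(s)=2$ to legitimately apply the Tauberian step, and keeping careful bookkeeping of bad primes so that the exceptional local factors contribute only lower-order error to the average over $p\le x$.
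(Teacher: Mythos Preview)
The paper does not supply a proof of this theorem at all: it is stated as background and attributed to Rosen and Silverman \cite{RS}, with a footnote noting that their argument assumes the Tate Conjecture. There is therefore nothing in the paper to compare your proposal against.

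That said, your outline is a faithful high-level summary of the actual Rosen--Silverman argument: the cohomological reinterpretation of $\sum_t a_t(p)$ via the Leray spectral sequence for $\pi:\mathcal{E}\to\mathbb{P}^1$, the identification (conditional on Tate) of the pole order of $L(H^2(\mathcal{E}),s)$ at $s=2$ with $\rho(\mathcal{E})$, the Shioda--Tate formula to isolate the Mordell--Weil contribution, and a Tauberian step to convert pole order into the limiting prime average. You are also right that the result is conditional; the paper's own footnote flags exactly this dependence on the Tate Conjecture. If you were to write this up in full you would need to be more careful about the analytic continuation hypothesis needed for the Tauberian theorem (Rosen and Silverman address this explicitly), but as a proof sketch there is no genuine gap.
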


The Rosen-Silverman theorem tells us that a bias in the first moment is responsible for the geometric rank of elliptic curves. Given how important the rank of an elliptic curve is, it is thus natural to study higher moments to see if there are biases there as well, and if so what they might tell us about elliptic curves.

%%%%%%%%%%%%%%%%%%%%%%%%%%%%%%%%%%%%%%%%%%%%%%%%%%
%%%%%%%%%%%%%%%%%%%%%%%%%%%%%%%%%%%%%%%%%%%%%%%%%%
%%%%%%%%%%%%%%%%%%%%%%%%%%%%%%%%%%%%%%%%%%%%%%%%%%
\subsubsection{Second Moment and the Bias Conjecture}
The second moment is \be A_{2,E}(p) \ = \ \frac{1}{p}\sum_{t \bmod p} a_{t}(p)^2.\ee Michel \cite{Mic} proved that if the elliptic curve does not have $j(T)$ constant then \be A_{2,E}(p) \ = \ p^2+ O(p^{3/2}), \ee where the lower order terms have size $p^{3/2}, p, p^{1/2}$ and $1$. We are using big-Oh notation: $f(x) = O(g(x))$ if for all $x$ sufficiently large there is some $C$ such that $|f(x)| \le C |g(x)|$.

In his thesis Miller \cite{Mi1, Mi2} studied several families where $A_{2,E}(p)$ could be computed in closed form. In all of these, the first lower term that did not average to zero had a negative average. He further showed that such a negative bias has implications in the distribution of zeros of the elliptic curve $L$-function near the central point. In particular, it helped explain a small amount of the observed excess rank, thus highlighting how important these terms are. He conjectured that this bias exists in every family, and so far this has been observed in every family studied to date \cite{HKLM, MMRW, Mi4}.

\begin{conjecture}[Negative Bias Conjecture (Miller)]
The largest lower order term in the second moment expansion of a one-parameter family that does not average to 0 has a negative average.
\end{conjecture}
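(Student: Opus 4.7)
The plan is to open the moment using the Legendre-symbol formula for $a_t(p)$, swap the order of summation, and then classify the resulting character sums in $t$ by size. Writing $F_t(x) = x^3 + a(t)x + b(t)$, we have
\be
p A_{2,E}(p) \ = \ \sum_{x,y \bmod p} \; \sum_{t \bmod p} \js{F_t(x) F_t(y)}.
\ee
For each fixed $(x,y)$, the inner sum is a Legendre-symbol sum of a polynomial $G_{x,y}(t) := F_t(x)F_t(y)$ in $t$ of controlled degree. Weil's theorem bounds this by $O(\sqrt p)$ unless $G_{x,y}$ is a perfect square in $\mathbb{F}_p[t]$ modulo squares. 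The main term $p^2$ therefore arises from the ``square locus'' of pairs $(x,y)$ for which $G_{x,y}$ is a square; the lower-order terms of sizes $p^{3/2}, p, p^{1/2}, 1$ must come from nearby degeneration loci.

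The next step is to stratify the $(x,y)$ sums by the degeneration type of $G_{x,y}(t)$. The linear- and quadratic-Legendre lemmas stated above handle the simplest strata cleanly: whenever, after collecting square factors, the residual polynomial in $t$ has degree $\le 2$, the inner sum evaluates exactly and contributes either a ``$-1$'' or a ``$(p-1)$'' weight. Strata where the residual polynomial has degree $3$ or $4$ must be treated with Weil/Deligne character-sum bounds and then averaged back over $x,y$; these are the terms that typically pile up to give $p$-sized and $p^{1/2}$-sized contributions. The goal is to isolate the first stratum at which the $t$-sum does not average to zero when then averaged over $p$ (in the sense of Rosen--Silverman), and read off its sign.

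To extract the sign I would try to match the surviving stratum to a sum of the form $\sum_t \js{h_E(t)}$ for an auxiliary polynomial $h_E$ depending on the family, and then interpret this as the Frobenius trace of an associated $L$-function or of a fibral cohomology class of the elliptic surface. Heuristically, the negativity arises because Hasse gives $a_t(p) = -\sum_x \js{F_t(x)}$ carries an inherent minus sign, and squaring preserves it in just the right way for the contribution of the cubic character sum in $t$ to bias negative. Matching this with the special families already computed (where the bias term is a small-degree Legendre sum whose sign is forced by a discriminant computation) gives a model of what the general output should look like.

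The hard part is exactly the step the paper flags: cubic and higher Legendre sums admit no closed-form evaluation, so for a generic family we cannot compute the inner sums for each $(x,y)$, only bound them. This means that although we can rigorously isolate the $p^{3/2}$ and $p$ strata combinatorially, identifying which stratum gives the first non-vanishing average (and extracting its sign) requires either a cohomological interpretation of the bias as an invariant of the elliptic surface $E$ (an Euler characteristic, conductor term, or self-intersection of a section) that is provably negative for non-constant $j$-invariant, or a monodromy computation in the sense of Katz for the sheaf on $\mathbb{A}^1_{\mathbb{F}_p}$ whose trace function is $t \mapsto a_t(p)^2$. Lacking either of these at present is why the conjecture remains open, and why the paper instead pursues targeted closed-form families and numerical experiments.
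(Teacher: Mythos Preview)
This statement is a \emph{conjecture}, not a theorem: the paper does not prove it, and indeed presents numerical evidence (for rank $\ge 4$ families and a rank $6$ family) suggesting it may \emph{fail} in general. So there is no proof in the paper to compare your proposal against; what you have written is a research program, not a proof, and you yourself correctly identify the obstruction in your final paragraph. The stratification-by-degeneration-type picture is exactly the mechanism the paper exploits in the closed-form families (switch the $t$-sum inside, evaluate linear and quadratic Legendre sums, count the square locus), but for a generic family the residual $t$-polynomials have degree $\ge 3$ and the sign extraction step is genuinely missing.

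One concrete error worth flagging: your heuristic that ``the negativity arises because $a_t(p) = -\sum_x \js{F_t(x)}$ carries an inherent minus sign, and squaring preserves it in just the right way'' cannot be right. Squaring obliterates that sign; $a_t(p)^2$ is unchanged if you redefine $a_t(p)$ without the minus. Whatever drives the bias must be intrinsic to the elliptic surface, not an artifact of a sign convention. Relatedly, any cohomological or monodromy argument of the kind you sketch would have to produce not a universal negative sign but a quantity that can flip sign depending on the family (since the paper's data for high-rank families points toward a positive bias). That rules out the simplest candidates like a bare Euler characteristic and suggests the correct invariant, if there is one, must interact with the rank or the configuration of sections.
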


The main idea in previous work has been to look at the triple sum \be \frac1{p} \sum_{t(p)} \sum_{x(p)} \sum_{y(p)} \js{x^3 + a(t)x + b}\js{y^3+a(t)y+b(t)}. \ee There is no hope in executing the $x$ or $y$ sum first; the idea is to switch orders and sum over $t$. By carefully choosing $a(t)$ and $b(t)$ one is able to get a closed form solution for this $t$-sum, and if one is fortunate the resulting $x$ and $y$ sums can also be done in closed form. This is typically the case if, after changing variables in the sums, we have a quadratic in $t$ sum. Then by our results on quadratic Legendre sums, its value is a function of the discriminant; here the discriminant will be a function of $x$ and $y$. In many cases we can carefully choose the polynomials so we can easily determine when the discriminant is zero modulo $p$, and thus we \emph{can} evaluate the resulting $x$ and $y$ sums, and obtain closed form expressions for the second moment.

Of course, it is possible that the bias is present \emph{because} the families investigated using the above techniques are clearly not generic. It is thus important to study all families of elliptic curves, and not just special families where the sums can be executed. All previous work has been focused on trying to find as general as possible families where we can execute these sums in closed form. This has been done successfully for many one and two-parameter families, see for example \cite{ACFKKMMWWYY, HKLM, MMRW, Mi3, Mi4, Wu}; note biases in two-parameter families were first studied by Wu \cite{Wu} last year in her S.-T. Yau High School Science project. This work continues that program by moving in \emph{two} new directions.

First, while we do look at some additional families, we give the first systematic investigation of one-parameter families of given rank. Thus most of the time we are unable to obtain closed form expressions, and instead we have to try to determine if a bias exists or not. Second, we look at whether or not the biases persist to higher moments. This introduces a tremendous computational challenge, as the bias conjecture states that the first lower order term that does not average to zero has a negative average; if there is a larger lower order term that does average to zero, it will drown out this bias and make it impossible to see numerically.

For example, we will show later the fourth moment is of size $2p^3$ with lower order terms of size $p^{5/2}, p^2$ and so on. Assume we have a family where there is a term of size $p^{5/2}$ but it averages to zero, and there is a term of size $p^2$ that has a negative average. If we look at the observed fourth moment minus the main term ($2p^2$) and divide by $p^{5/2}$, the contribution from the $p^{5/2}$ term will average to zero. Similar to our coin tossing example from before and the Central Limit Theorem, it will oscillate around zero and if we sum $P$ such primes, the average will be of size $\sqrt{P}/P = 1/\sqrt{P}$; the $\sqrt{P}$ comes from summing $P$ signed terms and assuming square-root cancellation, while the $P$ in the denominator comes from taking an average. Meanwhile, the contribution from the lower order term of size $p^2$ with a negative bias is hidden; since we are dividing by $p^{5/2}$ each term here will be of size $1/\sqrt{p}$, and thus its average will be of order $\sqrt{P}/P$ or $1/\sqrt{P}$.

This creates tremendous challenges in our studies. We will investigate higher moments, and sometimes we will still be able to see the bias. This can happen for example if there is no $p^{5/2}$ term, and we can thus divide by $p^2$. While these are not proofs, these results indicate interesting phenomena that are worth further study. In other families we can at least try to show that the numerics \emph{suggest} that the first lower order term averages to zero. We can do this by looking at many averages over consecutive blocks. For example, it takes a few days to compute the results for the first 1000 primes in a family. We can break into 20 blocks of 50 consecutive primes, and by the Central Limit Theorem each one of these blocks should be of size 0 with fluctuations of size $\sqrt{50}$ if it truly averages to zero. We can investigate if this happens, as well as look and see if roughly half the blocks have a small positive and half a small negative average. Additionally, we can look at the grand average of the first 1000 primes; if the first lower order term does average to zero we expect to see something on the order of $1/\sqrt{1000} \approx .032$ (thus anything two or three times this, positive or negative, is suggestive but not a proof of averaging to zero).

%%%%%%%%%%%%%%%%%%%%%%%%%%%%%%%%%%%%%%%%%%%%%%%%%%%%%%%%%%%%%%%%%%%%%%%%%%%%%%%%%%
%%%%%%%%%%%%%%%%%%%%%%%%%%%%%%%%%%%%%%%%%%%%%%%%%%%%%%%%%%%%%%%%%%%%%%%%%%%%%%%%%%
%%%%%%%%%%%%%%%%%%%%%%%%%%%%%%%%%%%%%%%%%%%%%%%%%%%%%%%%%%%%%%%%%%%%%%%%%%%%%%%%%%
\subsection{Summary of Results}

We have systematically investigated general one-parameter families. For some families, we are able to find the second moments expansions by separating primes into different congruence classes, which suggests that there is often a closed-form polynomial expression. We are then able to prove the results mathematically. The following table summarizes the numerical results for the second moments' expansions of the families we studied; see Figure \ref{fig:dimensionlinesquareD}.
\begin{figure}[ht]
\begin{center}
\scalebox{1.5}{\includegraphics{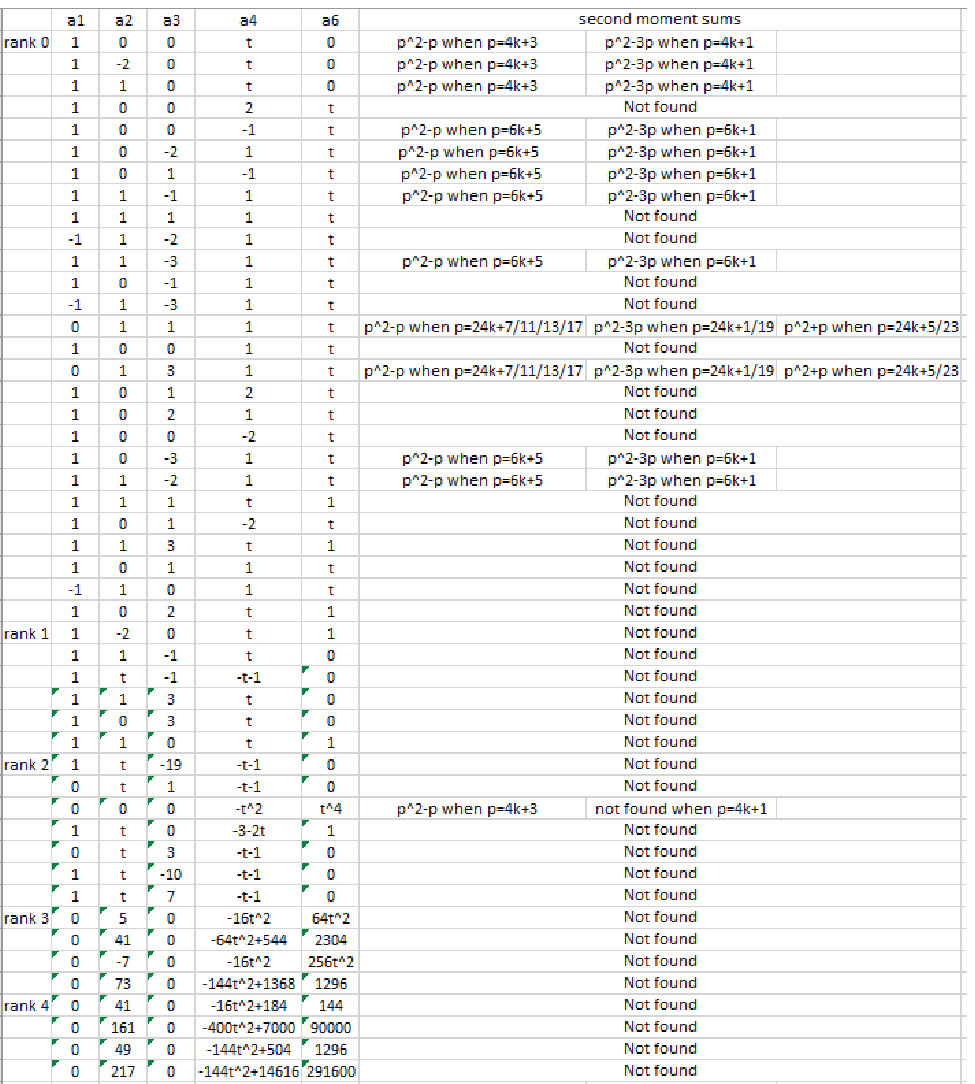}}
\caption{\label{fig:dimensionlinesquareD} Systematic investigation for second moments sums.}
\end{center}
\end{figure}
\ \\

Below is a summary of the first and second moments for some families where we are able to prove closed form expressions for the first two moments. We give the proofs in the next section; the arguments there are representative of the ones needed for all the families.

\ \\
\noindent \textbf{Family: $y^2=4x^3+ax^2+bx+c+dt$:}
\begin{itemize}
\item First moment: $A_{1,\varepsilon(p)}=0$.
\item Second moment:
\begin{equation}
A_{2,\varepsilon(p)} \ =\
\Bigg\{ {\  p^2-p-p\cdot \js{-48}-p\cdot \js{a^2-12b} \ \
\mbox{if} \  a^2-12b\ne 0 \atop p^2-p+p(p-1)\js{-48} \ \mbox{otherwise.}}
\end{equation}
\end{itemize}

\ \\
\noindent \textbf{Family: $y^2=4x^3+(4m+1)x^2+n\cdot tx$:}
\begin{itemize}
\item First moment: $A_{1,\varepsilon(p)}=0$.
\item Second moment: \begin{equation} A_{2,\varepsilon(p)}\ =\
\Bigg\{ {\ p^2-3p \ \
\mbox{if} \  p=4k+1 \atop p^2-p \ \mbox{if} \ p=4k+3.}
\end{equation}
\end{itemize}

\ \\
\noindent \textbf{Family: $y^2=x^3-t^2x+t^4$:}
\begin{itemize}
\item First moment: $A_{1,\varepsilon(p)} \ = \ -2p$.
\item Second moment:
\begin{equation}
A_{2,\varepsilon(p)}=p^2-p-p\cdot \js{-3}-p\cdot \js{12}-\sum_{x(p)} \sum_{y(p)} \js{x^3-x} \js{y^3-y}.
\end{equation}
\end{itemize}

\ \\

For families that we are not able to find closed-form expressions, we calculated the average bias of the second moment sums for the first $1000$ primes; see Figure \ref{4_6moments_1}.

In addition, we studied biases in the fourth and sixth moments, the first time such investigations have been done. We calculated the average bias of the fourth and sixth moment sums for the first $1000$ primes; see Figure \ref{4_6moments_1}. For some families, fluctuations of larger lower order terms that (we believe) average to zero drown out biases in even smaller lower order terms, and unfortunately make it impossible to see the biases numerically. We can however gather data supporting that the first lower order term averages to zero for families with lower ranks, and we will discuss the statistical tests needed to glean that from our observations.

\begin{figure}[ht]
\begin{center}
\scalebox{1}{\includegraphics{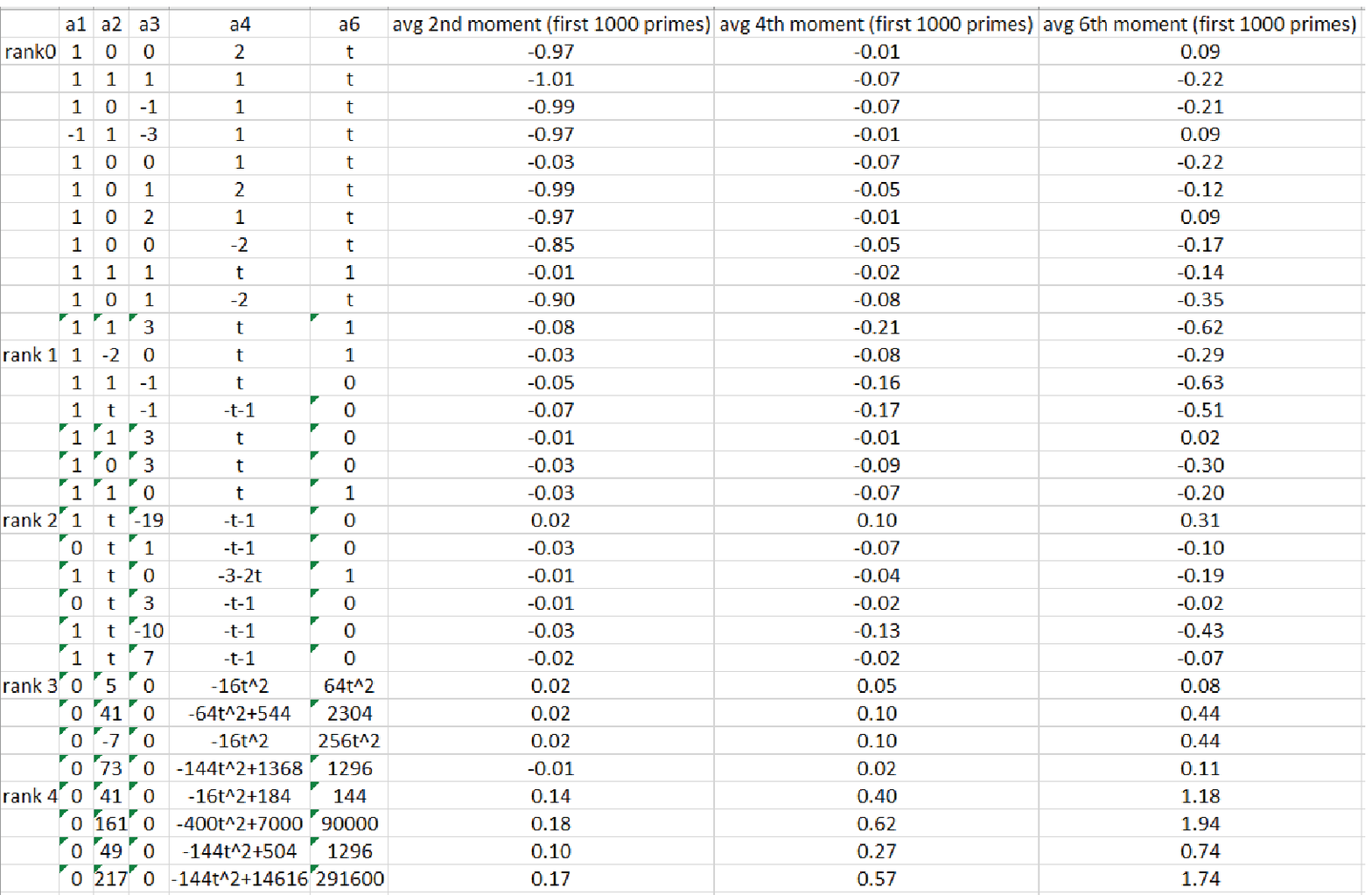}}
\caption{\label{4_6moments_1}Numerical data for the average biases of 2nd, 4th and 6th moments sums.}
\end{center}
\end{figure}

Through numeric computations for these families, we believe that the rank of the elliptic curves might play a role in determining the bias. Our data suggests the following (for details see Sections \ref{sec:two} and \ref{sec:three}).

\begin{itemize}

\item The rank $0$ and rank $1$ families have negative biases in their second moment sums. However, it is likely that higher rank families (${\rm rank}(E(\Q))\ge4$) have positive biases.

\item For the fourth moment, we also believe that the rank $0$ and rank $1$ families have negative biases. We see this in some families; in others we see the presence of terms of size $p^{5/2}$ whose behavior is consistent with their averaging to zero. Thus the data at least suggests a weaker version of the bias conjecture: the first lower order term does not have a positive bias for lower rank families. On the other hand, higher rank families (${\rm rank}(E(\Q))\ge4$) appear to have positive biases. 

\item The sixth moment results are similar to the fourth moment. The results are consistent with either a negative bias, or a leading term (now of size $p^{7/2}$) averaging to zero for lower rank families. Our data also suggests that higher rank families have positive biases. 

\item For the odd moments, the coefficients of the leading term vary with the primes. Our data suggests that the average value of the main term for the $2k+1$-th moment is $-C_{k+1} {\rm rank}(E(\Q)) p^{k+1}$, where $C_n$ is the n-th term of the Catalan numbers
\end{itemize}
%%%%%%%%%%%%%%%%%%%%%%%%%%%%%%%%%%%%%%%%%%%%%%%%%%%%%%%%%%%%%%%%%%%%%%%%%%%%%%%%%%%%%%%%%%%%%%%%%%%%%%%%%%%%%%%%%%%%%%%%%%%%%%%%%%%%%%%%%%%%%%%%%%%%%%%%%%%%%%%%%%%%%%%%%%%%%%
%%%%%%%%%%%%%%%%%%%%%%%%%%%%%%%%%%%%%%%%%%%%%%%%%%%%%%%%%%%%%%%%%%%%%%%%%%%%%%%%%%%%%%%%%%%%%%%%%%%%%%%%%%%%%%%%%%%%%%%%%%%%%%%%%%%%%%%%%%%%%%%%%%%%%%%%%%%%%%%%%%%%%%%%%%%%%%
%%%%%%%%%%%%%%%%%%%%%%%%%%%%%%%%%%%%%%%%%%%%%%%%%%%%%%%%%%%%%%%%%%%%%%%%%%%%%%%%%%%%%%%%%%%%%%%%%%%%%%%%%%%%%%%%%%%%%%%%%%%%%%%%%%%%%%%%%%%%%%%%%%%%%%%%%%%%%%%%%%%%%%%%%%%%%%
%%%%%%%%%%%%%%%%%%%%%%%%%%%%%%%%%%%%%%%%%%%%%%%%%%%%%%%%%%%%%%%%%%%%%%%%%%%%%%%%%%%%%%%%%%%%%%%%%%%%%%%%%%%%%%%%%%%%%%%%%%%%%%%%%%%%%%%%%%%%%%%%%%%%%%%%%%%%%%%%%%%%%%%%%%%%%%

\section{Biases in second moments of elliptic curve families}\label{sec:two}

In this section, we explore the existence of negative bias in second moments sums through numerical evidence for general elliptic curve families and computation of the bias in closed form for several new families. All previous research on the negative bias conjecture of second moment sums only studied specific, special families where the resulting sums could be done in closed form. Therefore, it is possible that the negative bias conjecture does not apply to all elliptic curves, and is an artifact of looking at carefully chosen, non-generic families.

%%%%%%%%%%%%%%%%%%%%%%%%%%%%%%%%%%%%%%%%%%%%%%%%%
%%%%%%%%%%%%%%%%%%%%%%%%%%%%%%%%%%%%%%%%%%%%%%%%%
\subsection{Systematic investigation for the second moments sums}
We have systematically investigated general one-parameter families. For some families we are able to find polynomial expressions for the second moment expansions by separating the primes into different congruence classes, which suggests that there is a closed-form expression for some families (from previous work of Miller \cite{Mi3} we know however that there are families where the $p^{3/2}$ term arises, and these are not polynomials). The following table summarizes the numerical results for the second moment expansions of the families we studied; see Figure \ref{fig:dimensionlinesquareC}.

\begin{figure}[ht]
\begin{center}
\scalebox{1.5}{\includegraphics{second_moment.eps}}
\caption{\label{fig:dimensionlinesquareC} Systematic investigation for second moments sums.}
\end{center}
\end{figure}

For all the families that we are not able to find the second moments expansions, we cannot find formulas for primes in congruence classes modulo $2^4\cdot 3^3\cdot 5$, which strongly suggests that there isn't a closed-form polynomial for their second moment sums. We chose to study this modulus as many properties of elliptic curves depend on powers of 2 and 3. Further, in the analysis above the largest modulus needed was $24 = 2^3 \cdot 3$, suggesting that $2^4\cdot 3^3\cdot 5$ is a reasonably safe choice.

In all the cases above where the numerics suggest closed-forms answers, we are able to prove it mathematically.

We now turn from numerical experimentation to theoretical calculation, and discuss some representative calculations of ours.

%%%%%%%%%%%%%%%%%%%%%%%%%%%%%%%%%%%%%%%%%%%%%%%%%
%%%%%%%%%%%%%%%%%%%%%%%%%%%%%%%%%%%%%%%%%%%%%%%%%
\subsection{First and second moments of the family $y^2=4x^3+ax^2+bx+c+dt$}

\begin{lemma}The first moment of the family $y^2=4x^3+ax^2+bx+c+dt$ is 0.
\end{lemma}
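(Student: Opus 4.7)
The plan is to unpack the definition of $a_t(p)$, swap the order of summation so that the sum over the parameter $t$ becomes the inner sum, and then observe that this inner sum is a linear Legendre sum in $t$, which vanishes by the Linear Legendre Sums lemma stated earlier in the excerpt.

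First I would write, for any odd prime $p$ with $p \nmid d$,
\begin{equation}
a_t(p) \ = \ -\sum_{x(p)} \js{4x^3 + ax^2 + bx + c + dt},
\end{equation}
so that
\begin{equation}
p \cdot A_{1,E}(p) \ = \ \sum_{t(p)} a_t(p) \ = \ -\sum_{t(p)} \sum_{x(p)} \js{4x^3 + ax^2 + bx + c + dt}.
\end{equation}

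Next I would switch the order of summation, writing the argument of the Legendre symbol as $dt + \bigl(4x^3 + ax^2 + bx + c\bigr)$ to emphasize that, for each fixed $x$, this is linear in $t$ with leading coefficient $d$. Since $p \nmid d$, the Linear Legendre Sums lemma gives
\begin{equation}
\sum_{t(p)} \js{dt + (4x^3 + ax^2 + bx + c)} \ = \ 0
\end{equation}
for every $x$. Summing over $x$ yields $p \cdot A_{1,E}(p) = 0$, hence $A_{1,E}(p) = 0$, which is the claim.

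The argument is essentially immediate once one swaps the order of summation, so the only delicate point is the interchange itself, together with noting that $d$ appears as the coefficient of $t$ and is generically invertible modulo $p$ (the finitely many primes dividing $d$ are absorbed into the usual bad-prime set and do not affect the statement, which is really a statement about all but finitely many primes). No cubic Legendre sum in $x$ ever needs to be evaluated, which is exactly why the first moment collapses so cleanly for this family: the parameter $t$ enters the Weierstrass equation linearly, turning the $t$-sum into a linear Legendre sum regardless of what the cubic in $x$ looks like.
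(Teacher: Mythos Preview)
Your proof is correct and follows essentially the same route as the paper: swap the order of summation and apply the Linear Legendre Sums lemma to the inner $t$-sum. The only cosmetic difference is that the paper first substitutes $t \mapsto 4d^{-1}t$ (for $p>4d$) so that the coefficient of $t$ becomes $4$ before invoking the lemma, whereas you apply the lemma directly with leading coefficient $d$; both versions need $p\nmid d$, and your formulation is if anything slightly cleaner.
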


\begin{proof}
For all $p>4d$, send $t$ to $4d^{-1}t$: Thus
\be \sum_{t(p)} \js{dt}=\sum_{t(p)} \js{4t} .\ee
Therefore,
\bea A_{1,\varepsilon(p)}&\ = \ & -\sum_{t(p)} \sum_{x(p)}  \js{4x^3+ax^2+bx+4t+c}\nonumber\\
&\ = \ & -\sum_{x(p)} \sum_{t(p)} \js{4t+4x^3+ax^2+bx+c}.\eea
As $p\not|4$ when $p\ne 2$, the t-sum vanishes by linear sum theorem, and
\be A_{1,\varepsilon(p)} \ = \ 0.  \ee
By the Rosen-Silverman Theorem, this is a rank 0 family. \end{proof}

\begin{lemma} The second moment of the family $y^2=4x^3+ax^2+bx+c+dt$ is
\begin{equation}
\twocase{A_{2,E}(p)\ =\ }{p^2-p-p\cdot \js{-48}-p\cdot \js{a^2-12b}}{if $a^2-12b\ne 0$}{p^2-p+p(p-1)\js{-48}}{otherwise.}
\end{equation}
\end{lemma}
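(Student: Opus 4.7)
The plan is to expand the square, switch the order of summation to put $t$ on the inside, evaluate the inner sum using the Quadratic Legendre Sum lemma, and then count a simple point-count on a conic.

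Let $F(x) := 4x^3+ax^2+bx+c$ so that $a_t(p) = -\sum_x \left(\tfrac{F(x)+dt}{p}\right)$. Expanding the square and switching orders gives
\begin{equation}
\sum_{t(p)} a_t(p)^2 \ = \ \sum_{x(p)}\sum_{y(p)} \sum_{t(p)} \left(\tfrac{(F(x)+dt)(F(y)+dt)}{p}\right).
\end{equation}
For $p\nmid d$, sending $t \mapsto d^{-1}(u-F(x))$ turns the inner sum into $\sum_{u(p)}\left(\tfrac{u(u+F(y)-F(x))}{p}\right)$, which is the quadratic Legendre sum with leading coefficient $1$ and discriminant $(F(y)-F(x))^2$. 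By the Quadratic Legendre Sum lemma this equals $-1$ when $F(x)\not\equiv F(y)\pmod p$ and $p-1$ when $F(x)\equiv F(y)\pmod p$. Letting $N := \#\{(x,y)\in\mathbb{F}_p^2 : F(x)\equiv F(y)\}$, I get
\begin{equation}
\sum_{t(p)} a_t(p)^2 \ = \ (p-1)N - (p^2-N) \ = \ pN - p^2.
\end{equation}

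Next I would count $N$ by factoring $F(x)-F(y) = (x-y)\,Q(x,y)$ with
\begin{equation}
Q(x,y) \ = \ 4x^2+4xy+4y^2 + a(x+y) + b.
\end{equation}
Then $N = p + M - M_d$, where $M := \#\{(x,y): Q(x,y)\equiv 0\}$ and $M_d := \#\{x: Q(x,x)\equiv 0\}$; the $p$ accounts for the diagonal $x=y$ (on which $F(x)=F(y)$ automatically), and $M_d$ removes the overcount of diagonal points that also satisfy $Q=0$. To compute $M$, fix $x$ and view $Q$ as a quadratic in $y$ with discriminant $D(x) := -48x^2-8ax+a^2-16b$, so $M = \sum_{x(p)}\bigl(1+\left(\tfrac{D(x)}{p}\right)\bigr)$. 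Applying the Quadratic Legendre Sum lemma a second time to $\sum_x \left(\tfrac{D(x)}{p}\right)$, whose inner discriminant is $256(a^2-12b)$, yields $-\left(\tfrac{-48}{p}\right)$ when $a^2-12b\not\equiv 0$ and $(p-1)\left(\tfrac{-48}{p}\right)$ when $a^2-12b\equiv 0$. For $M_d$, note $Q(x,x) = 12x^2+2ax+b$ has discriminant $4(a^2-12b)$, giving $M_d = 1+\left(\tfrac{a^2-12b}{p}\right)$ (which collapses to $1$ in the degenerate case, as desired).

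Assembling: in the non-degenerate case $a^2-12b\not\equiv 0\pmod p$,
\begin{equation}
N \ = \ 2p - 1 - \left(\tfrac{-48}{p}\right) - \left(\tfrac{a^2-12b}{p}\right),
\end{equation}
so $pN - p^2 = p^2 - p - p\left(\tfrac{-48}{p}\right) - p\left(\tfrac{a^2-12b}{p}\right)$, matching the first case of the lemma. In the degenerate case, $N = 2p - 1 + (p-1)\left(\tfrac{-48}{p}\right)$, giving $pN - p^2 = p^2 - p + p(p-1)\left(\tfrac{-48}{p}\right)$, matching the second case. The main bookkeeping obstacle, which I would emphasize in the write-up, is correctly handling the intersection of the diagonal $x=y$ with the conic $Q=0$ (i.e., the term $M_d$); forgetting this subtraction produces an answer off by $p\left(\tfrac{a^2-12b}{p}\right)$ in the generic case. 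The hypothesis $p\nmid d$ (and $p\ne 2,3$) is used only to permit the substitution $t\mapsto d^{-1}t$ and to pass to Weierstrass form, so the identity holds for all sufficiently large $p$.
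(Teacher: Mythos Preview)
Your proof is correct and follows essentially the same route as the paper: reduce the $t$-sum via the Quadratic Legendre Sum lemma (the paper expands the product $(m+4t)(n+4t)$ directly rather than using your shift $t\mapsto d^{-1}(u-F(x))$, but the discriminant $(F(x)-F(y))^2$ and the subsequent analysis are identical), then factor $F(x)-F(y)=(x-y)Q(x,y)$ and count $N$ by the same diagonal-plus-conic-minus-overlap decomposition. Your write-up is arguably cleaner in isolating the formula $pN-p^2$ before doing the point-count.
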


\begin{proof} We have
\bea A_{2,E}(p)&\ = \ & \sum_{t(p)} \sum_{x(p)} \sum_{y(p)} \js{4x^3+ax^2+bx+4t+c} \js{4y^3+ay^2+by+4t+c} \nonumber\\ m(x)&\ = \ &4x^3+ax^2+bx+c \nonumber\\
n(y)&\ = \ &4y^3+ay^2+by+c \nonumber\\
A_{2,E}(p)&\ = \ & \sum_{t(p)} \sum_{x(p)} \sum_{y(p)}  \js{16t^2+4(m+n)t+mn}.
\eea
The discriminant of $16t^2+4(m+n)t+mn$ is
\bea
\Delta_t(x,y)&\ = \ &16(m+n)^2-64mn \nonumber\\
&\ = \ &16(m-n)^2 \nonumber\\
\delta^2&\ = \ &\Delta_t(x,y) \nonumber\\
\delta&\ = \ &4(m-n)\nonumber\\
&\ = \ &4(4x^3+ax^2+bx+c-4y^3-ay^2-by-c)\nonumber\\
&\ = \ &4(x-y)(4x^2+4xy+4y^2+ax+ay+b).
\eea

If $p|\delta$, then $p|x-y$ or $p|4x^2+4xy+4y^2+ax+ay+b$. $x, y$ range from $0$ to $p-1$, so $p|x-y$ exactly $p$ times. By the quadratic formula mod $p$, $4x^2+4xy+4y^2+ax+ay+b\equiv 4y^2+(4x+a)y+4x^2+ax+b\equiv 0 \pmod p$ when $y=\frac{-4x-a\pm \sqrt{\Delta_y}}{8}$  ($\Delta_y$ is the discriminant of the polynomial  $4y^2+(4x+a)y+4x^2+ax+b$ in terms of y):
\bea
\Delta_y&\ =\ &(4x+a)^2-4\cdot 4(4x^2+ax+b)\nonumber\\
&\ = \ &-48x^2-8ax+a^2-16b.
\eea
If $\Delta_y$ is a non-zero square mod $p$, there are two solutions. If $\Delta_y$ is 0 mod $p$, there is one solution. If $\Delta_y$ is not a square mod $p$, there is no solution.
\\
The number of pairs of $x,y$ such that $p|4x^2+4xy+4y^2+ax+ay+b$ is
\be
\sum_{x(p)} 1+\js{-48x^2-8ax+a^2-16b}=p+\sum_{x(p)} \js{-48x^2-8ax+a^2-16b}.
\ee
The discriminant of $-48x^2-8ax+a^2-16b$ is
\bea
\Delta_x&\ = \ &(8a)^2-4\cdot (-48)(a^2-16b)\nonumber\\
&\ = \ &256a^2-3072b\nonumber\\
&\ = \ &256(a^2-12b).
\eea
\ \\

We break into cases, depending on the value of the discriminant.

\noindent \emph{Case 1: $a^2-12b\ne 0$}:
By the Quadratic Legendre Sum Theorem, if $p\not |256(a^2-12b)$,
\be \sum_{x(p)} \js{-48x^2-8ax+a^2-16b}=-\js{-48}. \ee
The number of pairs of $x,y$ such that $p|4x^2+4xy+4y^2+ax+ay+b$ is
\be
p+\sum_{x(p)} \js{-48x^2-8ax+a^2-16b}=p-\js{-48}.
\ee

The cases that we double count $x=y$ and $p|4x^2+4xy+4y^2+ax+ay+b$ is
\be
4x^2+4xy+4y^2+ax+ay+b\equiv 12y^2+2ay+b\equiv0 \pmod p. \label{acdde1}
\ee
The discriminant of $12y^2+2ay+b$ is
\be
\Delta_y=(2a)^2-4\cdot 12b=2^2\cdot (a^2-12b).
\ee

By the quadratic formula mod $p$, the number of solutions is computable, depending on $a^2-12b$.

\ \\
The number of solutions to (\ref{acdde1}) is
\be
1+\js{2^2\cdot (a^2-12b)}=1+\js{a^2-12b}.
\ee
Therefore, the total number of times that $p|(x-y)(4x^2+4xy+4y^2+ax+ay+b)$ is the number of times $p|x-y$ plus the number of times $p|4x^2+4xy+4y^2+ax+ay+b$ minus the cases that we double count.
\ \\
The number of times that $p|(x-y)(4x^2+4xy+4y^2+ax+ay+b)$ is
\be
p+p-\js{-48}-1-\js{a^2-12b}=2p-1-\js{-48}-\js{a^2-12b}.
\ee
The number of times that $p\not |(x-y)(4x^2+4xy+4y^2+x+y-4)$ is
\be
p^2-\left(2p-1-\js{-48}-\js{a^2-12b}\right)=p^2-2p+1+\js{-48}+\js{a^2-12b}.
\ee
By Quadratic Legendre Sum Theorem,
\bea
A_{2,E}(p)&\ = \ &(p-1)\left[2p-1-\js{-48}-\js{a^2-12b}\right]-\left[p^2-2p+1+\js{-48}+\js{a^2-12b}\right]\nonumber\\
&\ = \ &p^2-p-p\cdot \js{-48}-p\cdot \js{a^2-12b}.
\eea

\ \\

\noindent \emph{Case 2: $a^2-12b=0$}: By the Quadratic Legendre Sum Theorem, since $p|0$, we have
\be
\sum_{x(p)} \js{-48x^2-8ax+a^2-16b}=(p-1)\js{-48}.
\ee
The number of pairs of $x,y$ such that $p|4x^2+4xy+4y^2+ax+ay+b$ is
\be
p+\sum_{x(p)} \js{-48x^2-8ax+a^2-16b}=p+(p-1)\js{-48}.
\ee

The cases that we double count $x=y$ and $p|4x^2+4xy+4y^2+ax+ay+b$ is
\be
4x^2+4xy+4y^2+ax+ay+b\equiv 12y^2+2ay+b\equiv0 \pmod p. \label{acdde}
\ee
The discriminant of $12y^2+2ay+b$ is
\be
\Delta_y=(2a)^2-4\cdot 12b=2^2\cdot (a^2-12b).
\ee

By the quadratic formula mod $p$, the number of solutions is computable, depending on $a^2-12b$.

The number of solutions to (\ref{acdde}) is
\be
1+\js{2^2\cdot (a^2-12b)}=1.
\ee
Therefore, the total number of times that $p|(x-y)(4x^2+4xy+4y^2+ax+ay+b)$ is the number of times $p|x-y$ plus the number of times $p|4x^2+4xy+4y^2+ax+ay+b$ minus the cases that we double count.
\ \\
The number of times that $p|(x-y)(4x^2+4xy+4y^2+ax+ay+b)$ is
\be
p+p+(p-1)\js{-48}-1=2p-1+(p-1)\js{-48}.
\ee
The number of times that $p\not |(x-y)(4x^2+4xy+4y^2+x+y-4)$ is
\be
p^2-\left(2p-1+(p-1)\js{-48}\right)=p^2-2p+1-(p-1)\js{-48}.
\ee
    
\bea
A_{2,E}(p)&\ = \ &(p-1)\left[2p-1+(p-1)\js{-48}\right]-\left[p^2-2p+1-(p-1)\js{-48}\right]\nonumber\\
&\ = \ &p^2-p+p(p-1)\js{-48}.
\eea

Thus we have shown \begin{equation}
A_{2,E}(p) =
\Bigg\{ {\  p^2-p-p\cdot \js{-48}-p\cdot \js{a^2-12b} \ \
\mbox{if} \  a^2-12b\ne 0 \atop p^2-p+p(p-1)\js{-48} \ \mbox{otherwise.}}
\end{equation}
\end{proof}

\begin{lemma}When $a^2-12b$ is a non-zero square, the second moment of the family $y^2=4x^3+ax^2+bx+c+dt$ is
\begin{equation}
A_{2,E}(p)=p^2-2p-p\cdot \js{-48}
\end{equation}
\end{lemma}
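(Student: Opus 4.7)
The plan is to derive this as an immediate corollary of the preceding lemma, which already provides the full case split. Since a non-zero square is in particular non-zero, the hypothesis $a^2-12b \ne 0$ required in the first branch of that lemma is automatically satisfied, so
$$A_{2,E}(p) \;=\; p^2 - p - p\js{-48} - p\js{a^2-12b}$$
holds without any additional work.

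The only remaining step is to evaluate the Legendre symbol $\js{a^2-12b}$ under the stronger hypothesis. Whether one reads ``non-zero square'' as a square modulo $p$ or globally as a non-zero integer square $k^2$ with $p \nmid k$, multiplicativity of the Legendre symbol gives $\js{k^2} = \js{k}^2 = 1$. Substituting $\js{a^2-12b} = 1$ into the displayed formula collapses the last two terms into $-2p$, yielding
$$A_{2,E}(p) \;=\; p^2 - p - p\js{-48} - p \;=\; p^2 - 2p - p\js{-48},$$
which is the claimed identity.

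There is essentially no mathematical obstacle here; the result is a bookkeeping refinement of the previous lemma that records a cleaner closed-form under the extra structural assumption. The one mild subtlety, if the hypothesis is read globally over $\Z$, is the finite set of primes $p$ dividing $k$ (where $k^2 = a^2-12b$): for such primes $\js{a^2-12b} = 0$ rather than $1$, and one instead recovers $p^2 - p - p\js{-48}$. Since this exceptional set is finite, it is harmless for any asymptotic or average-bias statement that follows, and the displayed closed form holds for every prime $p$ outside this exceptional set.
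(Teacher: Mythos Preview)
Your argument is correct and matches the paper's approach: the paper states this lemma without proof immediately after the general second-moment computation, treating it as an immediate corollary obtained by substituting $\js{a^2-12b}=1$ into the first case. Your observation about the finitely many exceptional primes dividing $k$ (where $a^2-12b=k^2$) is a valid caveat that the paper leaves implicit.
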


\ \ \\ \ \\ \ \\
Nine of the families that we investigated through numerical computations are special cases of this elliptic curve family.
\subsubsection{$y^2+xy=x^3-x+t$}
($a_1=1$ $a_2=0$ $a_3=0$ $a_4=-1$ $a_6=t$).
\bea
y^2&\ = \ &4x^3+x^2-4x+4t. \nonumber\\
a^2-12b&\ = \ &1^2-12\cdot(-4)=7^2.
\nonumber\\
A_{1,\varepsilon(p)}&\ = \ &0.
\nonumber\\
A_{2,E}(p)&\ = \ &p^2-2p-p\cdot \js{-48}.
\eea

\subsubsection{$y^2+xy-2y=x^3+x+t$}
($a_1=1 a_2=0 a_3=-2 a_4=1 a_6=t$).
\bea
y^2&\ = \ &4x^3+x^2+4t+4.
\nonumber\\
a^2-12b&\ = \ &1^2-12\cdot(0)=1^2.
\nonumber\\
A_{1,\varepsilon(p)}&\ = \ &0.
\nonumber\\
A_{2,E}(p)&\ = \ &p^2-2p-p\cdot \js{-48}.
\eea

\subsubsection{$y^2+xy+y=x^3-x+t$}
($a_1=1$ $a_2=0$ $a_3=1$ $a_4=-1$ $a_6=t$).
\bea
y^2&\ = \ &4x^3+x^2-2x+4t+1.
\nonumber\\
a^2-12b&\ = \ &1^2-12\cdot(-2)=5^2.
\nonumber\\
A_{1,\varepsilon(p)}&\ = \ &0.
\nonumber\\
A_{2,E}(p)&\ = \ &p^2-2p-p\cdot \js{-48}.
\eea

\subsubsection{$y^2+xy-y=x^3+x^2+x+t$}
($a_1=1$ $a_2=1$ $a_3=-1$ $a_4=1$ $a_6=t$).
\bea
y^2&\ = \ &4x^3+5x^2+2x+4t+1.
\nonumber\\
a^2-12b&\ = \ &5^2-12\cdot(2)=1^2.
\nonumber\\
A_{1,\varepsilon(p)}&\ = \ &0.
\nonumber\\
A_{2,E}(p)&\ = \ &p^2-2p-p\cdot \js{-48}.
\eea

\subsubsection{$y^2+xy-3y=x^3+x^2+x+t$}
($a_1=1$ $a_2=1$ $a_3=-3$ $a_4=1$ $a_6=t$).
\bea
y^2&\ = \ &4x^3+5x^2-2x+4t+9.
\nonumber\\
a^2-12b&\ = \ &5^2-12\cdot(-2)=7^2.
\nonumber\\
A_{1,\varepsilon(p)}&\ = \ &0.
\nonumber\\
A_{2,E}(p)&\ = \ &p^2-2p-p\cdot \js{-48}.
\eea

\subsubsection{$y^2+xy-3y=x^3+x+t$}
($a_1=1$ $a_2=0$ $a_3=-3$ $a_4=1$ $a_6=t$).
\bea
y^2&\ = \ &4x^3+x^2-2x+4t+9.
\nonumber\\
a^2-12b&\ = \ &1^2-12\cdot(-2)=5^2.
\nonumber\\
A_{1,\varepsilon(p)}&\ = \ &0.
\nonumber\\
A_{2,E}(p)&\ = \ &p^2-2p-p\cdot \js{-48}.
\eea

\subsubsection{$y^2+xy-2y=x^3+x^2+x+t$}
($a_1=1$ $a_2=1$ $a_3=-2$ $a_4=1$ $a_6=t$).
\bea
y^2&\ = \ &4x^3+5x^2+4t+4.
\nonumber\\
a^2-12b&\ = \ &5^2-12\cdot(0)=5^2.
\nonumber\\
A_{1,\varepsilon(p)}&\ = \ &0.
\nonumber\\
A_{2,E}(p)&\ = \ &p^2-2p-p\cdot \js{-48}.
\eea

\subsubsection{$y^2+y=x^3+x^2+x+t$}
($a_1=0$ $a_2=1$ $a_3=1$ $a_4=1$ $a_6=t$).
\bea
y^2&\ = \ &4x^3+4x^2+4x+4t+1.
\nonumber\\
a^2-12b&\ = \ &4^2-12\cdot(4)=-32.
\nonumber\\
A_{1,\varepsilon(p)}&\ = \ &
0.
\nonumber\\
A_{2,E}(p)&\ = \ &p^2-p-p\cdot \js{-48}-p\cdot \js{-32}.
\eea

\subsubsection{$y^2+3y=x^3+x^2+x+t$}
($a_1=0$ $a_2=1$ $a_3=3$ $a_4=1$ $a_6=t$).
\bea
y^2&\ = \ &4x^3+4x^2+4x+4t+9.
\nonumber\\
a^2-12b&\ = \ &4^2-12\cdot(4)=-32.
\nonumber\\
A_{1,\varepsilon(p)}&\ = \ &0.
\nonumber\\
A_{2,E}(p)&\ = \ &p^2-p-p\cdot \js{-48}-p\cdot \js{-32}.
\eea
%%%%%%%%%%%%%%%%%%%%%%%%%%%%%%%%%%%%%%%%%%%%%%%%%%%%%%%%%%%%%%%%%%%%%%%%%%%%%%%%%%%%%%%%%%%%%%%%%%%%%%%%%%%%%%%%%%%%%%%%%%%%%%%%%%%%%%%%%%%%%%%%%%%%%%%%%%%%%%%%%%%%%%%%%%%%%%
%%%%%%%%%%%%%%%%%%%%%%%%%%%%%%%%%%%%%%%%%%%%%%%%%%%%%%%%%%%%%%%%%%%%%%%%%%%%%%%%%%%%%%%%%%%%%%%%%%%%%%%%%%%%%%%%%%%%%%%%%%%%%%%%%%%%%%%%%%%%%%%%%%%%%%%%%%%%%%%%%%%%%%%%%%%%%%

\subsection{First and second moments of the family $y^2=4x^3+(4m+1)x^2+n\cdot tx$}
\begin{lemma}The first moment of the family $y^2=4x^3+(4m+1)x^2+n\cdot tx$ is 0.
\end{lemma}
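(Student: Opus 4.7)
The plan is to compute the first moment directly by swapping the order of summation and applying the Linear Legendre Sum Lemma, exactly as was done in the previous family $y^2=4x^3+ax^2+bx+c+dt$. First I would write
\[
p\,A_{1,E}(p) \ = \ -\sum_{t(p)}\sum_{x(p)} \js{4x^3+(4m+1)x^2+ntx},
\]
interchange the two sums, and for each fixed $x$ view the Legendre argument as a linear polynomial in $t$ of the form $(nx)\,t + (4x^3+(4m+1)x^2)$.

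Next I would apply the Linear Legendre Sum Lemma to the inner $t$-sum. Assuming $p$ is large enough that $p\nmid n$ (the only primes excluded are a finite set independent of $t$), the coefficient $nx$ of $t$ is nonzero mod $p$ exactly when $x\not\equiv 0\pmod p$. For those $x$, the Linear Legendre Sum Lemma gives $\sum_{t(p)}\js{(nx)t+b(x)}=0$. The only surviving contribution comes from $x\equiv 0\pmod p$, where the constant term $4x^3+(4m+1)x^2$ also vanishes; hence the inner sum equals $p\,\js{0}=0$.

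Combining these observations, every contribution to the double sum vanishes, so $A_{1,E}(p)=0$ for all sufficiently large primes $p$. By the Rosen--Silverman theorem, this identifies the family as a rank $0$ family over $\mathbb{Q}(T)$, mirroring the conclusion drawn for the previous family. I do not anticipate any genuine obstacle here: the calculation is a direct parallel of the first-moment computation in the preceding lemma, with the key simplification being that the $t$-dependent term $ntx$ has coefficient $nx$, which vanishes mod $p$ precisely on the single locus $x=0$ where the remaining constant term also vanishes. The only minor subtlety worth flagging is the exclusion of the finitely many small primes dividing $n$, which is harmless for the asymptotic statement.
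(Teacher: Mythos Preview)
Your proposal is correct and follows essentially the same approach as the paper: swap the order of summation and apply the Linear Legendre Sum Lemma to the inner $t$-sum, noting that the $x=0$ term contributes nothing. The only cosmetic difference is that the paper first performs the change of variable $t\mapsto 4n^{-1}t$ (valid for $p>4n$) to replace the coefficient $nx$ by $4x$ before invoking the lemma, whereas you apply the lemma directly with coefficient $nx$; your route is slightly cleaner since that substitution is unnecessary.
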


\begin{proof} For all $p>4n$, send $t$ to $4n^{-1}t$. We have
\bea
\sum_{t(p)} \js{nt}&\ = \ &\sum_{t(p)} \js{4t}.\nonumber\\
A_{1,\varepsilon(p)}&\ = \ &
-\sum_{t(p)} \sum_{x(p)} \js{4x^3 + (4m+1)x^2 + 4tx}\nonumber\\
&\ = \ &-\sum_{x=1}^{p-1} \sum_{t(p)} \js{4xt + 4x^3 + (4m+1)x^2}.
\eea
As $p\not|4x$ when $p\ne 2$,  the $t$-sum vanishes by the linear Legendre sum theorem, and hence
\be
A_{1,\varepsilon(p)} \ = \ 0
\ee as claimed. \end{proof}

\begin{lemma} The second moment of the family $y^2=4x^3+(4m+1)x^2+n\cdot tx$ is
\begin{equation}
A_{2,E}(p)\ =\
\Bigg\{ {\ p^2-3p \ \
\mbox{{\rm if}} \  p=4k+1 \atop p^2-p \ \mbox{{\rm if}} \ p=4k+3.}
\end{equation}
\end{lemma}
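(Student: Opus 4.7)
The plan is to follow the same template used for the family $y^2 = 4x^3 + ax^2 + bx + c + dt$: write $A_{2,E}(p)$ as a triple Legendre sum, swap the order of summation to do the $t$-sum first via the Quadratic Legendre Sum Theorem, and then carry out the remaining $x,y$ sum. The key simplification here is that $4x^3 + (4m+1)x^2 + ntx$ factors as $x\cdot\bigl(4x^2 + (4m+1)x + nt\bigr)$, so by multiplicativity $\js{4x^3+(4m+1)x^2+ntx} = \js{x}\js{4x^2+(4m+1)x+nt}$, and the contributions from $x \equiv 0$ or $y \equiv 0 \pmod p$ drop out automatically.

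Setting $A = 4x^2 + (4m+1)x$ and $B = 4y^2 + (4m+1)y$, the inner $t$-sum becomes $\sum_{t(p)} \js{n^2 t^2 + n(A+B)t + AB}$, whose discriminant simplifies to $n^2(A-B)^2$, a perfect square. By the Quadratic Legendre Sum Theorem, this sum is $-1$ when $p \nmid A-B$ and $p-1$ when $p \mid A-B$. The condition $p \mid A - B$ factors cleanly as $p \mid (x-y)\bigl(4(x+y) + (4m+1)\bigr)$, so either $x \equiv y$ or $x + y \equiv c \pmod p$ with $c \equiv -(4m+1)\cdot 4^{-1}$.

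Because $\sum_{x,y=1}^{p-1}\js{xy} = \bigl(\sum_{x=1}^{p-1}\js{x}\bigr)^2 = 0$, the $(-1)$ and $(p-1)$ contributions combine to $p\sum_{p\mid A-B}\js{xy}$, with the sum restricted to $x,y \in \{1,\dots,p-1\}$. I would split this into two disjoint pieces: the diagonal $x = y$, contributing $\sum_{x=1}^{p-1}\js{x^2} = p-1$; and the off-diagonal $x + y \equiv c$ with $x \neq y$, which I would evaluate by running $\sum_{x=0}^{p-1}\js{x(c-x)} = -\js{-1}$ (Quadratic Legendre Sum Theorem, discriminant $c^2 \neq 0$) and then subtracting the excluded indices $x = 0$, $x = c$, $x = c/2$, whose contributions are $0$, $0$, and $1$. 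This off-diagonal total is $-\js{-1} - 1$.

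Assembling the pieces yields $A_{2,E}(p) = p\bigl[(p-1) + (-\js{-1} - 1)\bigr] = p^2 - 2p - p\js{-1}$, and the elementary identity $\js{-1} = +1$ for $p \equiv 1 \pmod 4$ and $\js{-1} = -1$ for $p \equiv 3 \pmod 4$ delivers $p^2 - 3p$ and $p^2 - p$ respectively. The main obstacle is the careful bookkeeping of boundary indices when passing between the full $x\in\{0,\dots,p-1\}$ sum and the restricted sum, together with verifying that for every sufficiently large prime $p$ (specifically $p \nmid 4n(4m+1)$) the element $c$ is a nonzero, well-defined residue, so that the intermediate Quadratic Legendre Sums fall into the ``nonzero discriminant'' branch rather than the degenerate one.
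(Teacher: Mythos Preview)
Your proof is correct and follows the same overall template as the paper: write $A_{2,E}(p)$ as a triple Legendre sum, execute the $t$-sum via the Quadratic Legendre Sum Theorem (the discriminant being a perfect square), and then split the remaining $(x,y)$ sum according to whether $x \equiv y$ or $x+y \equiv c \pmod p$. The paper reaches the same decomposition but devotes most of its effort to an explicit integer-level count of solutions to $4x+4y+4m+1 \in \{p,3p,5p,7p\}$ with $0 \le x,y \le p-1$, breaking into subcases by $p \bmod 4$ and the parities of $k$ and $m$, before finally applying the quadratic sum formula to the line $y = c-x$. You bypass all of that case analysis by working directly in $\Z/p\Z$, using $\sum_{x,y \ne 0}\js{xy} = 0$ to collapse the generic branch to zero, and evaluating the anti-diagonal via a second application of the Quadratic Legendre Sum Theorem; this is a real economy and makes the dependence on $\js{-1}$ immediate. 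The only caveat is the one you already flag: your evaluation $\sum_x \js{x(c-x)} = -\js{-1}$ and the subtraction of the $x=c/2$ term both require $c \not\equiv 0$, i.e.\ $p \nmid 4m+1$, so finitely many primes dividing $n(4m+1)$ must be excluded --- the paper likewise restricts to $p$ large.
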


\begin{proof}
\bea
A_{2,E}(p)&\ =\ & \sum_{t(p)} \sum_{x(p)} \sum_{y(p)} \js{4x^3 + (4m+1)x^2 + 4tx} \js{4y^3 + (4m+1)y^2 + 4ty}\nonumber\\
& = & \sum_{t(p)} \sum_{x(p)} \sum_{y(p)} \js{xy} \nonumber\\ & & \cdot \js{16t^2+4[4x^2+(4m+1)x+4y^2+(4m+1)y]t+[4x^2+(4m+1)x][4y^2+(4m+1)y]}.\nonumber\\
a(x)&\ = \ &4x^2+(4m+1)x. \nonumber\\
b(y)&\ = \ &4y^2+(4m+1)y.\nonumber\\
A_{2,E}(p)&\ = \ & \sum_{t(p)} \sum_{x(p)} \sum_{y(p)}  \js{16xy\cdot t^2+4xy(a+b)\cdot t+xy\cdot ab}.
\eea

We look at the discriminant of our polynomial in $t$:
\bea
\Delta&\ = \ &16x^2y^2[(a+b)^2-4ab]\nonumber\\
&\ = \ &16x^2y^2(a-b)^2.\\
\delta^2&\ = \ &\Delta \nonumber\\
\delta&\ = \ &4xy(a-b) \nonumber\\
&\ = \ &4xy[4x^2+(4m+1)x-4y^2-(4m+1)y] \nonumber\\
&\ = \ &4xy(x-y)(4x+4y+4m+1).
\eea
If $p|\delta$, then $xy=0$ or $x=y$ or $p\mid 4x+4y+4m+1$. We can ignore the cases when $xy=0$ because $\js{16xy}=0$, so it does not contribute to the second moment sum.
\ \ \\ \ \\ \ \\

We need to count the number of pairs of $x,y$ ranging from 0 to $p-1$ such that $p|4x+4y+4m+1$.
Thus we must consider choices where $0\le 4x+4y+1\le 8p-7$.

For $p>4m$, $0\le 4x+4y+4m+1<9p$.
\\
If $p|4x+4y+4m+1$, then $4x+4y+4m+1=0,p,2p,...,7p,8p$.
\\
There is no solution for $4(x+y+m)+1=0,2p,4p,6p,8p$.
\ \\ \ \\
\noindent \emph{If $4(x+y+m)+1=p$,}
\\
\noindent Case 1: $p=4k+1$: We must solve
\be
x+y+m \  = \ k\nonumber\\
\ee
\be
\systeme*{x=0,y=k-m}, . . . , \systeme*{x=0,y=k-m.}
\ee
There are $k-m+1$ solutions.
\\
If $k$ and $m$ are both even or both odd, there is one solution such that $x=y$.
\noindent Case 2: $p=4k+3$: No solution.
\ \ \\ \ \\ \ \\

\noindent \emph{If $4(x+y+m)+1=3p$,}
\\
Case 1: $p=4k+1$: We have $4(x+y+m)=12k+2$: no solution.
\ \\ 
Case 2: $p=4k+3$: We have to solve
\be
x+y+m \ =\ 3k+2\nonumber\\
\ee
\be
\systeme*{x=0,y=3k+2-m} , \ \ . . . \ \ , \systeme*{x=3k+2-m,y=0.}
\ee
There are $3k+3-m$ solutions.
\\
If $k$ and $m$ are both even or both odd, there is one solution such that $x=y$.
\ \ \\ \ \\ \ \ \\ \ \\

\noindent \emph{If $4(x+y+m)+1=5p$,}
\\
Case 1: $p=4k+1$: We have to solve
\be
x+y+m \ = \ 5k+1(x,y<4k+1)
\ee
\be
\systeme*{x=k+1-m,y=4k} , . . . , \systeme*{x=4k,y=k+1-m.}
\ee
There are $3k+m$ solutions.
\\
If $k$ and $m$ have opposite parity, there is one solution such that $x=y$.
\ \\ \ \\
Case 2: $p=4k+3$: $4(x+y+m)=20k+14$: no solution
\ \ \\ \ \\ \ \\

\noindent \emph{If $4(x+y+m)+1=7p$,}
\\
Case 1: $p=4k+1$: $4(x+y+m)=28k+6$: no solution
\ \\ 
Case 2: $p=4k+3$: we have to solve
\be
x+y+m \ =\ 7k+5 (x,y < 4k+3)
\ee
\be
\systeme*{x=3k+3-m,y=4k+2} ,\ \ . . . \ \ , \systeme*{x=4k+2,y=3k+3-m.}
\ee
There are $k+m$ solutions.
\\
If $k$ and $m$ have opposite parity, there is one solution such that $x=y$.
\ \\ \ \\

Combining the above, the total number of solutions is
\begin{itemize}
\item if $p=4k+1$: $(k-m+1)+3k+m=4k+1=p$,

\item if $p=4k+3$: $(3k+3-m)+k+m=4k+3=p$.
\end{itemize}

There are always $p$ times that $p|4x+4y+4m+1$ when $0\le x,y \le p-1$.
\\
There is always exactly one time such that $p\mid 4x+4y+4m+1$ and $x=y$.
\ \ \\ \ \\ \ \\

\noindent \textbf{Consider $p=4k+1$:}
\\
Notice that the sum of $x$ and $y$ for each solution is $k-m$ or $p+k-m$
\\
Suppose that $y=k-m-x$.
\\
By the Quadratic Legendre sum theorem, the contribution of a prime $p|4(x+y)+1$ to the sum is $(p-1)\js{xy}$. Thus we have
\bea
(p-1)\cdot \sum_{x(p)} \js{x\cdot (k-m-x)}&\ = \ &(p-1)\cdot \sum_{x(p)} \js{-x^2+(k-m)x)}\nonumber\\
&\ = \ &(p-1)\cdot \left(-\js{-1}\right).
\eea
As $p=4k+1$, $\js{-1}=-1$, we find
\be
(p-1)\cdot \sum_{x(p)} \js{x\cdot (k-m-x)}=p-1.
\ee
There is always one time such that $p\mid 4x+4y+4m+1$ and $x=y$, so there are $p-2$ times such that $x=y$, $p\not | 4x+4y+4m+1$.
\\
The contribution of the cases $x=y$ to the sum is
\bea
(p-2)\cdot (p-1) \js{xy}&\ = \ &(p-2)\cdot (p-1) \js{x^2}\nonumber\\
&\ = \ &(p-2)(p-1).
\eea
We know that $\sum_{x(p)}\sum_{y_(p)} \js{xy}=0$.
\\
For all the cases such that $p\mid 4xy(x-y)(4x+4y+4m+1)$, the sum of $\js{xy}$ is $1+(p-2)=p-1$.
\\
Therefore, for all the cases such that $p\not | 4xy(x-y)(4x+4y+4m+1)$, the sum of $\js{xy}$ is $-(p-1)=1-p$.
\\
The contribution of all the cases such that $p\not | 4xy(x-y)(4x+4y+4m+1)$ to the second moment sum is $-\js{xy}$, which is $p-1$.
\ \\ \ \\
For $p=4k+1$,
\be
A_{2,E}(p)= p-1+(p-2)\cdot (p-1)+p-1=p^2-p.
\ee
\ \ \\ \ \\ \ \\
\noindent \textbf{Consider now $p=4k+3$.}
\\
Notice that the sum of $x$ and $y$ for each solution is $3k+2-m$ or $p+3k+2-m$.
\\
Suppose that $y=3k+2-m-x$.
\\
By the Quadratic Legendre sum theorem, the contribution of a prime $p|4(x+y+m)+1$ to the sum is
\bea
(p-1)\js{xy}&\ = \ &(p-1)\cdot \sum_{x(p)} \js{x\cdot (3k+2-m-x)}\nonumber\\
&\ = \ &(p-1)\cdot \sum_{x(p)} \js{-x^2+(3k+2-m)x}\nonumber\\
&\ = \ &(p-1)\cdot \left(-\js{-1}\right).
\eea
As $p=4k+3$, we have $\js{-1}=1$.
\be (p-1)\cdot \sum_{x(p)} \js{x\cdot (3k+2-m-x)}=1-p.\ee
\\
There is always one time such that $p\mid 4x+4y+4m+1$ and $x=y$, so there are $p-2$ times such that $x=y$, $p\not | 4x+4y+4m+1$.
\\
The contribution of the cases $x=y$ to the sum is
\bea
(p-2)\cdot (p-1) \js{xy}&\ = \ &(p-2)\cdot (p-1) \js{x^2}\nonumber\\
&\ = \ &(p-2)(p-1).
\eea
We know that $\sum_{x(p)}\sum_{y_(p)} \js{xy}=0$.
\\
For all the cases such that $p\mid 4xy(x-y)(4x+4y+4m+1)$, the sum of $\js{xy}$ is $-1+(p-2)=p-3$.
\\
Therefore, for all the cases such that $p\not | 4xy(x-y)(4x+4y+4m+1)$, the sum of $\js{xy}$ is $-(p-3)=3-p$.
\\
The contribution of all the cases such that $p\not | 4xy(x-y)(4x+4y+4m+1)$ to the second moment sum is the sum of $-\js{xy}$, which is $p-3$.
\ \\ \ \\
For $p=4k+1$,
\be A_{2,E}(p)= 1-p+(p-2)\cdot (p-1)+p-3=p^2-3p.\ee
\\

Therefore, the above analysis has shown that
\begin{equation}
A_{2,E}(p)\ =\
\Bigg\{ {\ p^2-3p \ \
\mbox{if} \  p=4k+1 \atop p^2-p \ \mbox{if} \ p=4k+3.}
\end{equation}
\end{proof}

Three of the families that we investigated through numerical computations are special cases of this elliptic curves family.

\subsubsection{$y^2+xy=x^3+tx$}
($a_1=1$ $a_2=0$ $a_3=0$ $a_4=t$ $a_6=0$).
\be
y^2\ = \ 4x^3+x^2+4tx.
\ee
\begin{equation}
A_{2,E}(p) =
\Bigg\{ {\ p^2-3p \ \
\mbox{if} \  p=4k+1 \atop p^2-p \ \mbox{if} \ p=4k+3}
\end{equation}
\subsubsection{$y^2+xy=x^3-2x^2+tx$}
($a_1=1$ $a_2=-2$ $a_3=0$ $a_4=t$ $a_6=0$).
We first put in Weierstrass form:
\be
y^2 \ = \ 4x^3-7x^2+4tx.
\ee
We have
\begin{equation}
A_{2,E}(p)\ =\
\Bigg\{ {\ p^2-3p \ \
\mbox{if} \  p=4k+1 \atop p^2-p \ \mbox{if} \ p=4k+3.}
\end{equation}

\subsubsection{$y^2+xy=x^3+x^2+tx$}
($a_1=1$ $a_2=1$ $a_3=0$ $a_4=t$ $a_6=0$).
Changing variables to have it in Weierstrass form gives
\be
y^2 \ = \ 4x^3+5x^2+4tx.
\ee
We find
\begin{equation}
A_{2,E}(p)\ =\
\Bigg\{ {\ p^2-3p \ \
\mbox{if} \  p=4k+1 \atop p^2-p \ \mbox{if} \ p=4k+3.}
\end{equation}

%%%%%%%%%%%%%%%%%%%%%%%%%%%%%%%%%%%%%%%%%%%%%%%%%%%%%%%%%%%%%%%%%%%%%%%%%%%%%%%%%%%%%%%%%%%%%%%%%%%%%%%%%%%%%%%%%%%%%%%%%%%%%%%%%%%%%%%%%%%%%%%%%%%%%%%%%%%%%%%%%%%%%%%%%%%%%%
%%%%%%%%%%%%%%%%%%%%%%%%%%%%%%%%%%%%%%%%%%%%%%%%%%%%%%%%%%%%%%%%%%%%%%%%%%%%%%%%%%%%%%%%%%%%%%%%%%%%%%%%%%%%%%%%%%%%%%%%%%%%%%%%%%%%%%%%%%%%%%%%%%%%%%%%%%%%%%%%%%%%%%%%%%%%%%
\subsection{First and second moments of the family$y^2=x^3-t^2x+t^4$}
($a_1=0$ $a_2=0$ $a_3=0$ $a_4=-t^2$ $a_6=t^4$)
\begin{lemma}The first moment of the family $y^2=x^3-t^2x+t^4$ is $-2p$.
\end{lemma}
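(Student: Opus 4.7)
The plan is to unfold the definition, swap orders of summation, and compute everything by linear and quadratic Legendre sums after a well-chosen change of variables. By definition,
\be a_t(p) \ = \ -\sum_{x(p)} \js{x^3 - t^2 x + t^4}, \ee
so, following the convention used in this section,
\be A_{1,\varepsilon(p)} \ = \ \sum_{t(p)} a_t(p) \ = \ -\sum_{x(p)}\sum_{t(p)} \js{t^4 - xt^2 + x^3}, \ee
and my aim is to show the last double sum equals $2p$. Since the inner expression is a quartic in $t$ (not a cubic), none of the closed-form lemmas apply directly; the key observation is that for each $x \ne 0$ the substitution $t = xw$ is a bijection on $\mathbb{F}_p$ and factors
\be t^4 - xt^2 + x^3 \ = \ x^3\bigl(xw^4 - w^2 + 1\bigr), \ee
so $\js{t^4 - xt^2 + x^3} = \js{x}\js{xw^4 - w^2 + 1}$, using $\js{x^3}=\js{x}$.

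The $x = 0$ slice contributes $\sum_{t(p)} \js{t^4} = p-1$. For the $x \ne 0$ slice, after the substitution I swap the orders of summation, combine the two Legendre symbols into one, and obtain
\be \sum_{w(p)}\sum_{x(p)} \js{w^4 x^2 + (1-w^2) x}, \ee
where extending back to $x=0$ is free since $\js{0}=0$. The inner sum is exactly the kind of quadratic Legendre sum handled earlier in the paper: the leading coefficient is $w^4$ and the discriminant is $(1-w^2)^2$.

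Applying the Quadratic Legendre Sum Theorem case-by-case finishes the job. When $w=0$ the polynomial collapses to $x$ and the $x$-sum vanishes. When $w=\pm 1$ the discriminant is zero and the $x$-sum equals $(p-1)\js{w^4}=p-1$. For the remaining $p-3$ values of $w$ the leading coefficient is nonzero and the discriminant is a nonzero square, so the $x$-sum equals $-\js{w^4}=-1$. Summing over $w$ gives $0 + 2(p-1) + (-1)(p-3) = p+1$, so the $x \ne 0$ contribution to the double sum is $p+1$. Adding the $x = 0$ piece yields a total of $(p-1)+(p+1)=2p$, hence $A_{1,\varepsilon(p)} = -2p$, as claimed. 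The only step requiring real insight is spotting the substitution $t = xw$ that trades a quartic-in-$t$ for a linear-in-$x$ expression; everything else is routine bookkeeping with the standard Legendre sum lemmas.
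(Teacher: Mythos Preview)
Your proof is correct. The paper's argument is the mirror image of yours: after peeling off the $x=0$ slice, the paper substitutes $x \mapsto tx$ (a bijection on $\mathbb{F}_p^*$ for each $t\neq 0$) to obtain
\[
\js{x^3 - t^2 x + t^4} \ = \ \js{t^2}\,\js{t^2 + (x^3-x)t},
\]
and then applies the quadratic Legendre sum in $t$, whose discriminant is $(x^3-x)^2 = \bigl((x-1)x(x+1)\bigr)^2$. You instead substitute $t \mapsto xw$, obtain a linear expression in $x$, and then multiply back the extracted $\js{x}$ to get a quadratic Legendre sum in $x$ with discriminant $(1-w^2)^2$. Both routes land on the identical final count $2(p-1)-(p-3)=p+1$ for the nonzero slice, so the two arguments are really duals of one another: scale the ``other'' variable, pull out the resulting square or cube, and reduce to a quadratic Legendre sum whose discriminant is visibly a perfect square. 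The paper's version is marginally shorter since the quadratic appears directly without the recombination step $\js{x}\cdot\js{\,\cdot\,}\to\js{x\cdot(\,\cdot\,)}$, but your version has the virtue of making the leading coefficient $w^4$ a perfect square from the outset.
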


\begin{proof} We have
\bea
A_{1,\varepsilon(p)}&\ = \ & -\sum_{t(p)} \sum_{x(p)} \js{x^3-t^2x+t^4}\nonumber
\nonumber\\
&\ = \ &-\sum_{t(p)} \js{t^4}-\sum_{t(p)} \sum_{x=1}^{p-1} \js{x^3-t^2x+t^4}\nonumber
\nonumber\\
&\ = \ &-p+1-\sum_{t(p)} \sum_{x=1}^{p-1} \js{t^3x^3-t^3x+t^4}\nonumber
\nonumber\\
&\ = \ &-p+1-\sum_{t(p)} \sum_{x=1}^{p-1} \js{t^2} \js{t^2+t(x^3-x)}\nonumber
\nonumber\\
&\ = \ &-p+1-\sum_{x=1}^{p-1} \sum_{t(p)} \js{t^2+t(x^3-x)}.
\eea
We compute the discriminant of the polynomial in $t$:
\be
\Delta \ = \ (x^3-x)^2=[(x-1)x(x+1)]^2
\ee

Note $p\mid \Delta$ when $x=1$ or $p-1$, and $p\not | \Delta$ when $x=2, 3, \dots, p-2$.

By the Quadratic Legendre Sum Theorem,
\be
A_{1,\varepsilon(p)} \ = \ -p+1-2(p-1)+(p-3) \ = \ -2p.
\ee
\end{proof}

\begin{lemma} The second moment of the family $y^2=x^3-t^2x+t^4$ is
\begin{equation}
A_{2,E}(p) \ = \ p^2-p-p\cdot \js{-3}-p\cdot \js{12}-\sum_{x(p)} \sum_{y(p)} \js{x^3-x} \js{y^3-y}
\end{equation}
\end{lemma}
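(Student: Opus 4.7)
The plan is to mimic the strategy used for the previous two families: expand the second moment as a triple Legendre sum, use a substitution to clear the hidden powers of $t$ inside the cubic, swap orders so the innermost sum is quadratic in $t$, and apply the Quadratic Legendre Sum Lemma. Concretely, I will start from
\[ A_{2,E}(p) \;=\; \sum_{t(p)}\sum_{x(p)}\sum_{y(p)} \js{x^3-t^2x+t^4}\js{y^3-t^2y+t^4}. \]
The $t=0$ slice will vanish because $\sum_x\js{x^3}=\sum_x\js{x}=0$. For $t\neq 0$ I will substitute $x\mapsto tu$ and $y\mapsto tv$; using $(tu)^3-t^2(tu)+t^4=t^3(u^3-u+t)$ together with $\js{t^3}=\js{t}$, each factor becomes $\js{t}\js{u^3-u+t}$, and the two $\js{t}$'s multiply to $1$. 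After restoring the $t=0$ value in the new variables I obtain
\[ A_{2,E}(p) \;=\; \sum_{t(p)}\sum_{u,v(p)} \js{u^3-u+t}\js{v^3-v+t}\;-\;\sum_{u,v(p)}\js{u^3-u}\js{v^3-v}, \]
which already accounts for the subtracted double sum that appears in the statement.

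Next I will swap the order of summation and evaluate the $t$-sum inside. The product $(u^3-u+t)(v^3-v+t)$ is a monic quadratic in $t$ whose discriminant is
\[ \bigl((u^3-u)-(v^3-v)\bigr)^2 \;=\; (u-v)^2\,(u^2+uv+v^2-1)^2, \]
always a perfect square. The Quadratic Legendre Sum Lemma (with leading coefficient $a=1$ so $\js{a}=1$) then gives the inner $t$-sum as $p-1$ when $(u-v)(u^2+uv+v^2-1)\equiv 0\pmod p$ and as $-1$ otherwise. Letting $N$ be the number of such ``bad'' pairs $(u,v)\bmod p$, the resulting double sum equals $(p-1)N-(p^2-N)=pN-p^2$.

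The final step is to count $N$ by inclusion--exclusion. There are $p$ pairs with $u=v$. For the conic $u^2+uv+v^2=1$, treating it as a quadratic in $u$ gives $1+\js{-3v^2+4}$ solutions for each $v$; summing over $v$ and invoking the Quadratic Legendre Sum Lemma once more (with discriminant $48$) yields $p-\js{-3}$ pairs. The overlap of the two conditions forces $u=v$ together with $3u^2=1$, which, using $\js{3^{-1}}=\js{3}=\js{12}$, contributes $1+\js{12}$. Hence $N=2p-1-\js{-3}-\js{12}$, and substituting gives $pN-p^2=p^2-p-p\js{-3}-p\js{12}$; subtracting the leftover $\sum_{u,v}\js{u^3-u}\js{v^3-v}$ produces the claimed formula. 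The main bookkeeping obstacle will be the inclusion--exclusion count and tracking the Legendre-symbol identities in the overlap; small primes such as $p=2,3$ (where $48\equiv 0$) will need a brief separate check.
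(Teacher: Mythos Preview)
Your proposal is correct and follows essentially the same approach as the paper: the substitution $x\mapsto tu$, $y\mapsto tv$ to reduce to a monic quadratic in $t$, the discriminant factorization $(u-v)^2(u^2+uv+v^2-1)^2$, the application of the Quadratic Legendre Sum Lemma, and the inclusion--exclusion count of the ``bad'' set are all exactly what the paper does (with $x,y$ in place of your $u,v$). Your bookkeeping via $pN-p^2$ is a slightly cleaner repackaging of the paper's $(p-1)N-(p^2-N)$, and your handling of the $t=0$ slice is marginally more explicit, but there is no substantive difference.
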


\begin{proof} We have
\bea
A_{2,E}(p)&\ = \ & \sum_{t(p)} \sum_{x(p)} \sum_{y(p)} \js{x^3-t^2x+t^4} \js{y^3-t^2y+t^4}
\nonumber\\
&\ = \ &\sum_{t(p)} \sum_{x(p)} \sum_{y(p)} \js{t^3x^3-t^3x+t^4} \js{t^3y^3-t^3y+t^4}\nonumber\\
&\ = \ &\sum_{t(p)} \sum_{x(p)} \sum_{y(p)} \js{t^6} \js{x^3-x+t} \js{y^3-y+t}\nonumber\\
&\ = \ &\sum_{t(p)} \sum_{x(p)} \sum_{y(p)} \js{t^2+(x^3-x+y^3-y)t+(x^3-x)(y^3-y)}-\sum_{x(p)} \sum_{y(p)} \js{x^3-x} \js{y^3-y}\nonumber\\
a(x)&\ = \ &x^3-x \nonumber\\
b(y)&\ = \ &y^3-y \nonumber\\
A_{2,E}(p) &=& \sum_{t(p)} \sum_{x(p)} \sum_{y(p)}  \js{t^2+(a+b)t+ab}-\sum_{x(p)} \sum_{y(p)} \js{x^3-x} \js{y^3-y}.
\eea
The discriminant of $t^2+(a+b)t+ab$ is
\be
\Delta_t=(a+b)^2-4ab=(a-b)^2
\ee
\bea
\delta^2&\ = \ &\Delta_t\nonumber
\\
\delta&\ = \ &(a-b)\nonumber
\\
&\ = \ &x^3-x-y^3+y\nonumber
\\
&\ = \ &(x-y)(x^2+xy+y^2-1)
\eea
If $p|\delta$, then $p|x-y$ or $p|x^2+xy+y^2-1$
\\
x, y range from 0 to p-1, so $p|x-y$ p times.
\\
When $p|x^2+xy+y^2-1$,
\\
By quadratic formula mod $p$,
\\
$y^2+xy+x^2-1\equiv 0 \pmod p$ when
\\
$y=\frac{-x\pm \sqrt{\Delta_y}}{2}$  ($\Delta_y$ is the discriminant of the polynomial $y^2+xy+x^2-1$ in terms of y)
\bea
\Delta_y&\ = \ &x^2-4\cdot (x^2-1)\nonumber
\\
&\ = \ &-3x^2+4
\eea
\\
If $\Delta_y$ is a non-zero square mod $p$, there are two solutions. If $\Delta_y$ is 0 mod $p$, there is one solution. If $\Delta_y$ is not a square mod $p$, there is no solution.
\\
The number of pairs of $x,y$ such that $p|y^2+xy+x^2-1$ is
\be
\sum_{x(p)} 1+\js{-3x^2+4}=p+\sum_{x(p)} \js{-3x^2+4}.
\ee
The discriminant of $-3x^2+4$ is
\bea
\Delta_x&\ = \ &0-4(-3)\cdot 4 \nonumber\\
&\ = \ &48
\eea
By the Quadratic Legendre Sum Theorem, if $p\not |2,3$,
\be
\js{-3x^2+4} \ = \ -\js{-3}
\ee
The number of pairs of $x,y$ such that $p|y^2+xy+x^2-1$ is
\be
p+\sum_{x(p)} \js{-3x^2+4}=p-\js{-3}.
\ee
We need the number of cases that we double count $x=y$ and $p|y^2+xy+x^2-1$.
If $x=y$,
\be
y^2+xy+x^2-1\equiv 3y^2-1\equiv0 \pmod p.
\ee
The discriminant of $3y^2-1$ is
\bea
\Delta_y&\ = \ &0-4\cdot 3\cdot (-1)\nonumber\\
&\ = \ &12.
\eea
By the quadratic Legendre sum formula mod $p$, the number of solutions is $1+\js{12}$.
Therefore, the total number of times that $p|(x-y)(y^2+xy+x^2-1)$ is the number of times $p|x-y$ plus the number of times $p|y^2+xy+x^2-1$ minus the cases that we double count.
\be
p+p-\js{-3}-1-\js{12} \ = \ 2p-1-\js{-3}-\js{12}.
\ee
The number of times that $p\not |(x-y)(4x^2+4xy+4y^2+x+y-4)$ is $p^2-(2p-1-\js{-3}-\js{12})$. Again by Quadratic Legendre Sum Theorem,
\bea
A_{2,E}(p)&\ = \ &(p-1)\left[2p-1-\js{-3}-\js{12}\right]-\left[p^2-(2p-1-\js{-3}-\js{12})\right]\nonumber\\ & & \ \ \ \ - \ \sum_{x(p)} \sum_{y(p)} \js{x^3-x} \js{y^3-y}\nonumber\\
&\ = \ &p^2-p-p\cdot \js{-3}-p\cdot \js{12}-\sum_{x(p)} \sum_{y(p)} \js{x^3-x} \js{y^3-y}.
\eea

Thus
\bea
\sum_{x(p)} \sum_{y(p)} \js{x^3-x} \js{y^3-y}
&\ = \ &\sum_{x(p)} \js{x^3-x} \sum_{y(p)} \js{y^3-y}\nonumber
\\
&\ = \ &\sum_{x(p)} \js{x^3-x} \sum_{y(p)} \js{(y-1)y(y+1)}.
\eea
We know that $\js{-1}=(-1)^{\frac{p-1}{2}}$.
\\
When $p=4k+3$, $\js{-1}=-1$
\bea
\sum_{y(p)} \js{(y-1)y(y+1)}&\ = \ &\sum_{y=2}^{2k} \js{(y-1)y(y+1)}+\sum_{y=2k+1}^{4k-1} \js{(y-1)y(y+1)}\nonumber\\
&\ = \ &\sum_{y=2}^{2k} \js{(y-1)y(y+1)}+\js{-1}\js{(y-1)y(y+1)}\nonumber\\
&\ = \ &0.
\eea
In conclusion,
\be
A_{2,E}(p) \ = \ p^2-p-p\cdot \js{-3}-p\cdot \js{12}-\sum_{x(p)} \sum_{y(p)} \js{x^3-x} \js{y^3-y}.
\ee
When $p\equiv 3\pmod{4}$,
\be
A_{2,E}(p) \ = \ p^2-p-p\cdot \js{-3}-p\cdot \js{12}.
\ee
\end{proof}

%%%%%%%%%%%%%%%%%%%%%%%%%%%%%%%%%%%%%%%%%%%%%%%%%%%%%%%%%%%%%%%%%%%%%%%%%%%%%%%%%%%%%%%%%%%%%%%%%%%%%%%%%%%%%%%%%%%%%%%%%%%%%%%%%%%%%%%%%%%%%%%%%%%%%%%%%%%%%%%%%%%%%%%%%%%%%%
%%%%%%%%%%%%%%%%%%%%%%%%%%%%%%%%%%%%%%%%%%%%%%%%%%%%%%%%%%%%%%%%%%%%%%%%%%%%%%%%%%%%%%%%%%%%%%%%%%%%%%%%%%%%%%%%%%%%%%%%%%%%%%%%%%%%%%%%%%%%%%%%%%%%%%%%%%%%%%%%%%%%%%%%%%%%%%
\subsection{Numerical data for second moment sums}
Now we report on families where we cannot find closed-form polynomials for their second moments sums. These are more generic families than the ones investigated both above and in previous work, and provide a new and stronger test of the bias conjecture.

For these families, we have calculated the second moment sums for the first $1000$ primes. By Michel's theorem, we know that the main term of the sum is $p^2$, and lower order terms have size $p^{3/2}, p, p^{1/2}$ or $1$. From the data we have, we can tell if it is likely that the second moment has a $p^{3/2}$ term. If the value of $\frac{\text{second\ moment}-p^2}{p}$ converges or stays bounded as the prime grows, then it is likely that the largest lower order term of the second moment sum is $p$, as if there were a $p^{3/2}$ term we would have fluctuations of size $p^{1/2}$.

By subtracting the main term ($p^2)$ from the sum and then dividing by the largest lower term ($p^{3/2}$ or $p$), we calculated the average bias; see Figure \ref{bias_2nd}.

\begin{figure}[ht]
\begin{center}
\scalebox{1.1}{\includegraphics{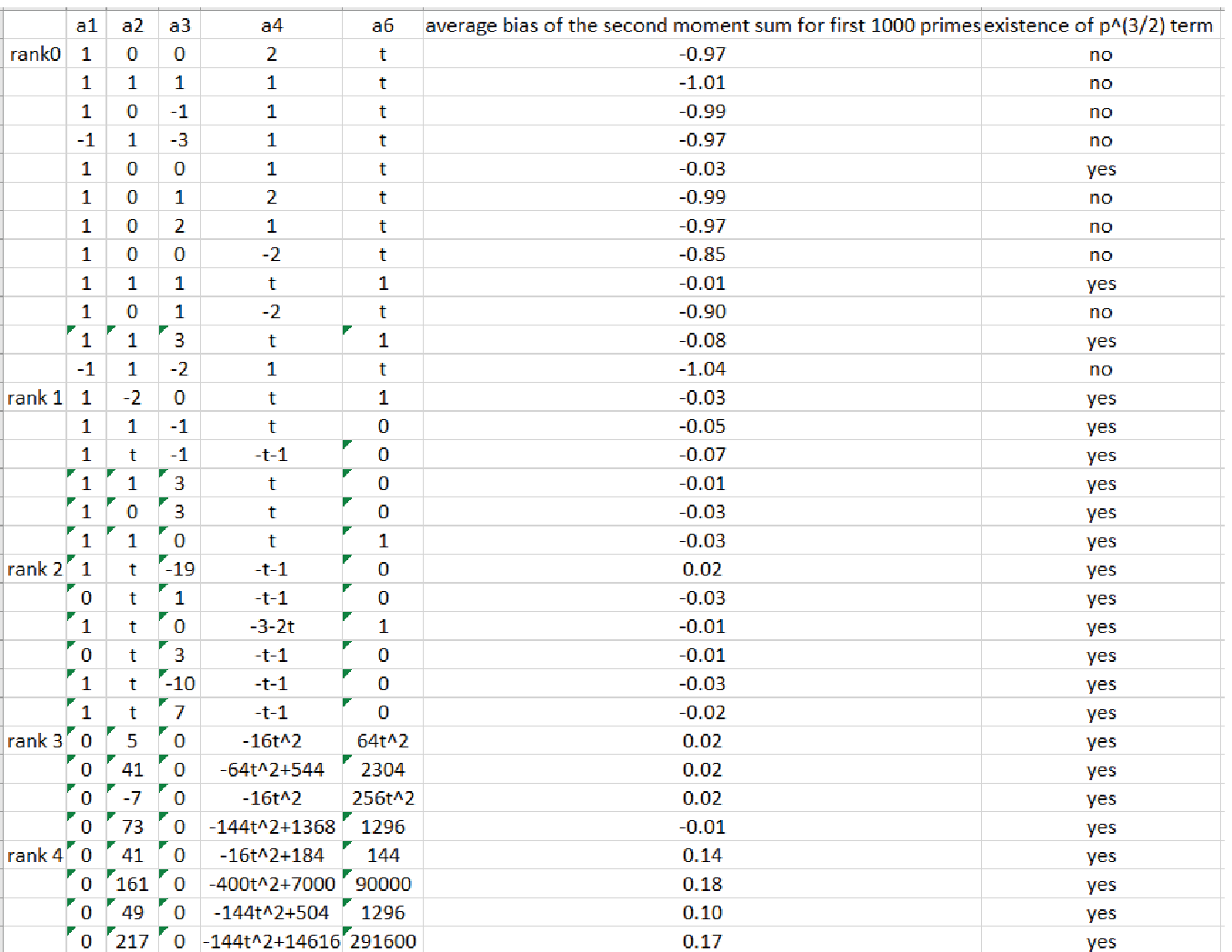}}
\caption{\label{bias_2nd} Numerical data for the average biases of second moments sums.}
\end{center}
\end{figure}

The data shows that all the families where we do not believe there is a $p^{3/2}$ term clearly have negative biases (around $-1$). When the $p^{3/2}$ exists, the bias unfortunately becomes impossible to see. The reason is that the $p^{3/2}$ term drowns it out; we now have to divide by $p^{3/2}$. If that term averages to zero, then the term of size $p$, once we divide by $p^{3/2}$, is of size $1/p^{1/2}$.

Let's investigate further the consequence of having a term of size $p^{3/2}$. We divide the difference of the observed second moment minus $p^2$ (the expected value) by $p^{3/2}$. We now have signed summands of size 1. By the Philosophy of Square-Root Cancellation, if we sum $N$ such signed terms we expect a sum of size $\sqrt{N}$. As we are computing the average of these second moments, we divide by $N$ and have an expected value of order $1/\sqrt{N}$. In other words, if the $p^{3/2}$ term is present and averages to zero, we expect sums over ranges of primes to be about $1/\sqrt{N}$. If $N=1000$ this means we expect sums on the order of .0316. Looking at the data in Figure \ref{bias_2nd}, what we see is consistent with this analysis. Thus, while we cannot determine if the first lower order term that does not average to zero has a negative bias, we can at least show that the data is consistent with the $p^{3/2}$ term averaging to zero for lower rank families.

The table suggests a lot more. From the data, we can see that all the rank $0$ and rank $1$ families have negative biases. However, all four rank $4$ families have shown positive biases from the first 1000 primes. Thus, we look further to see if it is likely the result of fluctuations, or if perhaps it is evidence against the bias conjecture.

We divide the $1000$ primes into $20$ groups of $50$ for further analysis. If the $p^{3/2}$ term averages to zero, we would expect each of these groups to be positive and negative equally likely, and we can compare counts. We now expect each group to be on the order of $1/\sqrt{50} \approx .14$. Thus we shouldn't be surprised if it is a few times .14 (positive or negative); remember we do not know the constant factor in the $p^{3/2}$ term and are just doing estimates.

For the rank $2$ family $a_1=1$, $a_2=t$, $a_3=-19$, $a_4=-t-1$, $a_6=0$, $12$ of the $20$ groups of primes have shown positive biases. Figure \ref{1t(-19)(-t-1)0} is a histogram plot of the distribution of the average biases among the 20 groups.

\begin{figure}[ht]
\begin{center}
\scalebox{1}{\includegraphics{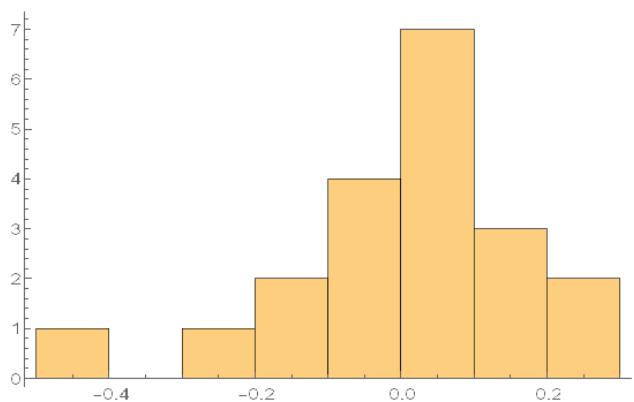}}
\caption{\label{1t(-19)(-t-1)0} Distribution of average biases in the first 1000 primes for family $a_1=1$, $a_2=t$, $a_3=-19$, $a_4=-t-1$, $a_6=0$.}
\end{center}
\end{figure}

We now further analyze our data by dividing the 1000 primes into 100 groups of 10 for this family. As shown from the data, $58$ of the $100$ groups of primes have shown positive biases; however, this is still consistent with a term that averages to zero by the Philosophy of Square-Root Cancelation or the Central Limit Theorem (if we have 100 outcomes that are positive half the time and negative half the time, the expected number of positive outcomes is 50 and the standard deviation is 5; thus having 58 positive and 42 negative groups is well-within two standard deviations). Figure \ref{1t(-19)(-t-1)0_10th} is a histogram plot of the distribution of the average biases among the $100$ groups. These numbers are consistent with the $p^{3/2}$ term averaging to zero.

\begin{figure}[ht]
\begin{center}
\scalebox{1}{\includegraphics{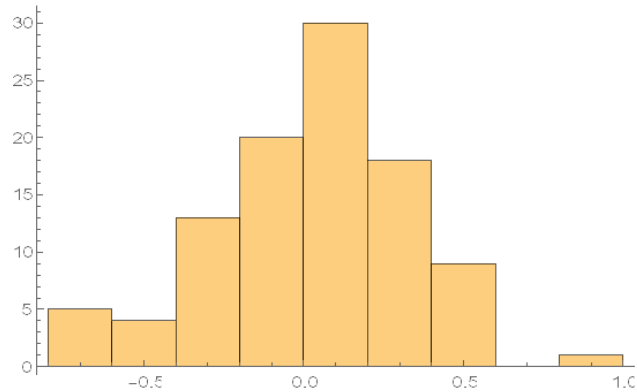}}
\caption{\label{1t(-19)(-t-1)0_10th} Distribution of average biases in the first 1000 primes for family $a_1=1$, $a_2=t$, $a_3=-19$, $a_4=-t-1$, $a_6=0$.}
\end{center}
\end{figure}

For the rank $3$ family $a_1=0$, $a_2=5$, $a_3=0$, $a_4=-16t^2$, $a_6=64t^2$, $12$ of the $20$ groups of primes have shown positive biases. Figure \ref{050-16t_264t_2A} is a histogram plot of the distribution of the average biases among the 20 groups. Again the results are consistent with the $p^{3/2}$ term averaging to zero.

\begin{figure}[ht]
\begin{center}
\scalebox{1}{\includegraphics{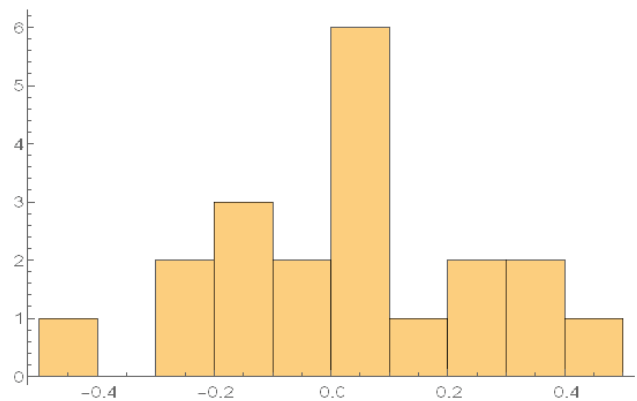}}
\caption{\label{050-16t_264t_2A} Distribution of average biases in the first 1000 primes for family $a_1=0$, $a_2=5$, $a_3=0$, $a_4=-16t^2$, $a_6=64t^2$.}
\end{center}
\end{figure}

From the primes that we have analyzed, the two families above appear to have positive biases more frequently than negative biases, \emph{but not by a statistically significant margin}. Now we compare them to some families that have shown negative biases in the first 1000 primes.
\ \\ \ \\
For the rank $0$ family $a_1=1$, $a_2=0$, $a_3=0$, $a_4=1$, $a_6=t$, \emph{\textbf{all}} of the $20$ groups of primes have shown negative biases. Figure \ref{1001t} is a histogram plot of the distribution of the average biases among the 20 groups.

\begin{figure}[ht]
\begin{center}
\scalebox{1}{\includegraphics{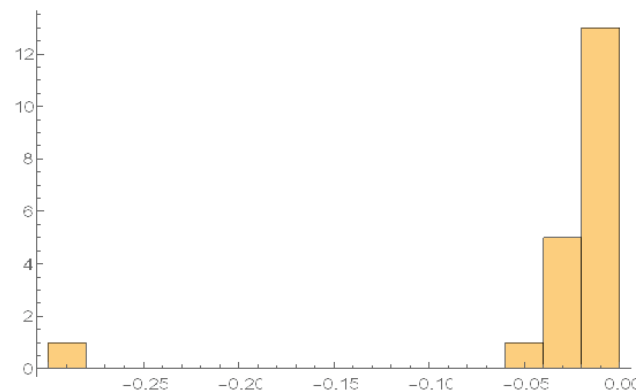}}
\caption{\label{1001t}  Distribution of average biases in the first 1000 primes for family $a_1=1$, $a_2=0$, $a_3=0$, $a_4=1$, $a_6=t$.}
\end{center}
\end{figure}

We now further analyze our data by dividing the 1000 primes into 100 groups of 10 for this family. As shown by the data, \emph{\textbf{all}} of the $100$ groups of primes have shown negative biases, which strongly indicates that the negative bias exists in this family. Figure \ref{1001t_10primes} is a histogram plot of the distribution of the average biases among the $100$ groups.

\begin{figure}[ht]
\begin{center}
\scalebox{0.95}{\includegraphics{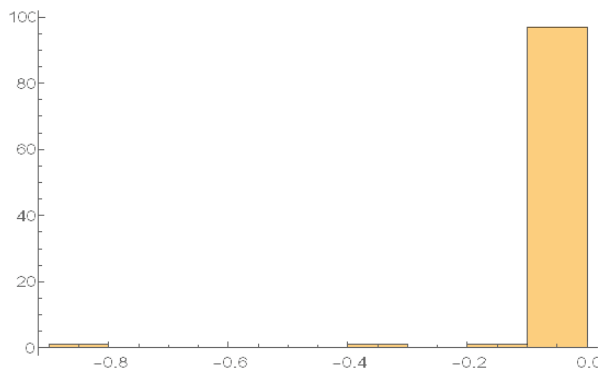}}
\caption{\label{1001t_10primes} Distribution of average biases in the first 1000 primes for family $a_1=1$, $a_2=0$, $a_3=0$, $a_4=1$, $a_6=t$}
\end{center}
\end{figure}

For the rank $1$ family $a_1=1$, $a_2=1$, $a_3=-1$, $a_4=t$, $a_6=0$, $13$ of the $20$ groups of primes have shown negative biases. Figure \ref{11-1t0} is a histogram plot of the distribution of the average biases among the 20 groups.

\begin{figure}[ht]
\begin{center}
\scalebox{0.9}{\includegraphics{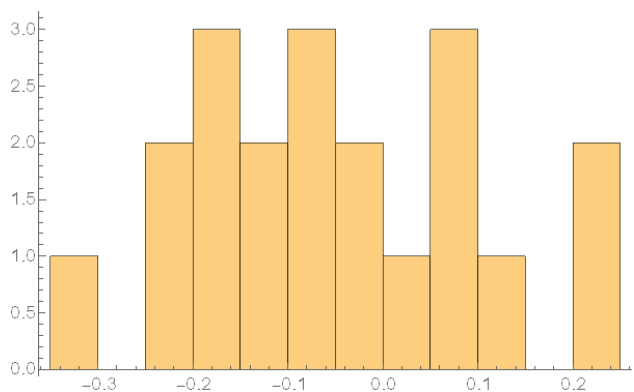}}
\caption{\label{11-1t0} Distribution of average biases in the first 1000 primes for family $a_1=1$, $a_2=1$, $a_3=-1$, $a_4=t$, $a_6=0$.}
\end{center}
\end{figure}

For the rank $1$ family $a_1=1$, $a_2=t$, $a_3=-1$, $a_4=-t-1$, $a_6=0$, $13$ of the $20$ groups of primes have shown negative biases. Figure \ref{1t-1-t-10} is a histogram plot of the distribution of the average biases among the 20 groups.

\begin{figure}[ht]
\begin{center}
\scalebox{0.9}{\includegraphics{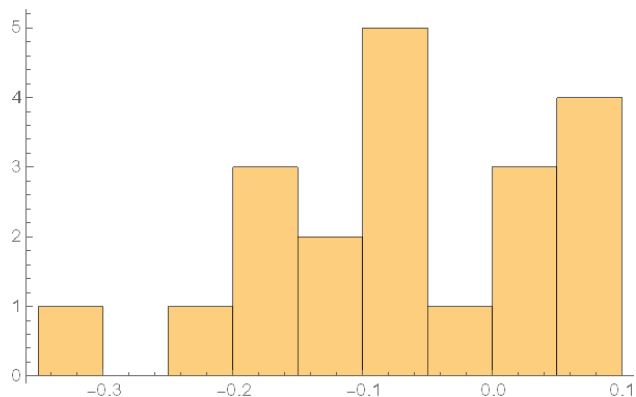}}
\caption{\label{1t-1-t-10} Distribution of average biases in the first 1000 primes for family $a_1=1$, $a_2=t$, $a_3=-1$, $a_4=-t-1$, $a_6=0$}
\end{center}
\end{figure}

From these data, the rank $0$ and rank $1$ families have negative biases more frequently, but we are working with small data sets and must be careful in how much weight we assign such results. In the rank $0$ family, \emph{\textbf{all}} the groups of primes have negative biases. In both of the rank $1$ families, $65\%$ of the groups of the primes have negative biases, though this percentage is not statistically significant. On the other hand, two of the three rank $2$ families appear to have positive biases in the first $1000$ primes, but again this value is not statistically significant. We believe that the rank of the families might play a role in determining the bias. Therefore, it is possible that the negative bias conjecture does not hold for some families with larger rank.

To further examine the biases in families with larger ranks, we investigate the rank 6 family $a_1=0$, $a_2=2(16660111104 t) + 811365140824616222208$, $a_3=0$, $a_4=[2(-1603174809600)t-26497490347321493520384](t^2+2t-8916100448256000000+1)$, $a_6=[2(2149908480000)t+343107594345448813363200](t^2+2t-8916100448256000000+1)^2$. Our data suggests that there is a positive bias in this family. The average bias of second moments sums for the first $1000$ primes is $0.246759$. Figure \ref{811365140824616222208_2nd} is a histogram plot of the distribution of the average biases among the $100$ groups of $10$ primes. $78$ of the $100$ groups of primes have positive biases, which suggests that it is likely that the second moment of this family has a positive $p^{3/2}$ term. 

\begin{figure}[ht]
\begin{center}
\scalebox{0.9}{\includegraphics{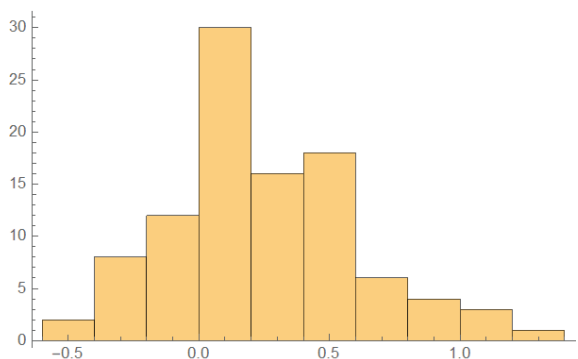}}
\caption{\label{811365140824616222208_2nd} Distribution of average biases in the first 1000 primes for a rank 6 family.}
\end{center}
\end{figure}

Therefore, our data for the rank $4$ and rank $6$ families has shown that it is likely that higher rank families (${\rm rank}(E(\Q))\ge4$) have positive biases.

%%%%%%%%%%%%%%%%%%%%%%%%%%%%%%%%%%%%%%%%%%%%%%%%%%%%%%%%%%%%%%%%%%%%%%%%%%%%%%%%%%%%%%%%%%%%%%%%%%%%%%%%%%%%%%%%%%%%%%%%%%%%%%%%%%%%%%%%%%%%%%%%%%%%%%%%%%%%%%%%%%%%%%%%%%%%%%
%%%%%%%%%%%%%%%%%%%%%%%%%%%%%%%%%%%%%%%%%%%%%%%%%%%%%%%%%%%%%%%%%%%%%%%%%%%%%%%%%%%%%%%%%%%%%%%%%%%%%%%%%%%%%%%%%%%%%%%%%%%%%%%%%%%%%%%%%%%%%%%%%%%%%%%%%%%%%%%%%%%%%%%%%%%%%%
%%%%%%%%%%%%%%%%%%%%%%%%%%%%%%%%%%%%%%%%%%%%%%%%%%%%%%%%%%%%%%%%%%%%%%%%%%%%%%%%%%%%%%%%%%%%%%%%%%%%%%%%%%%%%%%%%%%%%%%%%%%%%%%%%%%%%%%%%%%%%%%%%%%%%%%%%%%%%%%%%%%%%%%%%%%%%%
%%%%%%%%%%%%%%%%%%%%%%%%%%%%%%%%%%%%%%%%%%%%%%%%%%%%%%%%%%%%%%%%%%%%%%%%%%%%%%%%%%%%%%%%%%%%%%%%%%%%%%%%%%%%%%%%%%%%%%%%%%%%%%%%%%%%%%%%%%%%%%%%%%%%%%%%%%%%%%%%%%%%%%%%%%%%%%
\section{Biases in fourth and six moments of elliptic curve families}\label{sec:three}
We now explore, for the first time, the higher moments of the Dirichlet coefficients of the elliptic curve $L$-functions to see if biases we found in the first and second moments persist. Unfortunately existing techniques on analyzing the second moment sums do not apply to the higher moments, even if we choose nice families. If we switch orders of the moments' sums and sum over $t$, we are going to get a cubic or higher degrees polynomials. Therefore, we can only try to predict or observe the biases through numerical evidence. We calculated the 4th and 6th moment sums for the first $1000$ primes. From Section \ref{4th_moment_form}, we know that the main term of the fourth moment sum is $2p^3$, and the largest possible lower order terms have size $p^{5/2}$. From Section \ref{6th_moment_form}, we know that the main term of the sixth moment sum is $5p^4$, and the largest possible lower order terms have size $p^{7/2}$. From the data we have gathered, all the 4th moments of these families have $p^{5/2}$ terms, and all the 6th moments have $p^{7/2}$ terms. By subtracting the main term ($2p^3)$ from the fourth moment sum and then dividing by the size of the largest lower term ($p^{5/2}$), we calculated the average bias for the fourth moment of the first $1000$ primes. Similarly, we subtracted $5p^4$ from the sixth moment sum and then divided by $p^{7/2}$ to calculate the average bias for the sixth moment of the first $1000$ primes; See Figure \ref{4_6moments}.

\begin{figure}[ht]
\begin{center}
\scalebox{1}{\includegraphics{2nd_4th_6th_moments}}
\caption{\label{4_6moments}Numerical data for the average biases of 2nd, 4th and 6th moments sums.}
\end{center}
\end{figure}
%%%%%%%%%%%%%%%%%%%%%%%%%%%%%%%%%%%%%%%%%%%%%%%%%%%%%%%%%%%%%%%%%%%%%%%%%%%%%%%%%%%%%%%%%%%%%%%%%%%%%%%%%%%%%%%%%%%%%%%%%%%%%%%%%%%%%%%%%%%%%%%%%%%%%%%%%%%%%%%%%%%%%%%%%%%%%%
%%%%%%%%%%%%%%%%%%%%%%%%%%%%%%%%%%%%%%%%%%%%%%%%%%%%%%%%%%%%%%%%%%%%%%%%%%%%%%%%%%%%%%%%%%%%%%%%%%%%%%%%%%%%%%%%%%%%%%%%%%%%%%%%%%%%%%%%%%%%%%%%%%%%%%%%%%%%%%%%%%%%%%%%%%%%%%
\subsection{Biases in fourth moment sums}
From the data, we can see that all the biases for lower rank families in the fourth moment are relatively small (smaller than $0.2$), which indicates that the $p^{5/2}$ term likely averages to $0$. By the Philosophy of Square-Root Cancellation, we expect the order of the size of fluctuation to be around $\sqrt{1000}/1000 \approx 0.03$. Therefore, if the bias is between $-0.2$ and $0.2$, we would expect $p^2$ to be the largest lower order term.

Note that for $30$ out of $31$ families, the bias in fourth moments appear to be similar to the bias in second moments (families that have negative bias in second moments also seem to have negative bias in fourth moments, and vice versa), though much smaller magnitudes likely due to the presence of a $p^{5/2}$ term that is averaging to zero. We now explore a few families whose 4-th moment biases have different scales in magnitudes.

For the rank $0$ family $a_1=1$, $a_2=0$, $a_3=0$, $a_4=2$, $a_6=t$, $10$ of the $20$ groups of primes have shown negative biases. Figure \ref{1002t_4th} is a histogram plot of the distribution of the average biases among the 20 groups.

\begin{figure}[ht]
\begin{center}
\scalebox{0.8}{\includegraphics{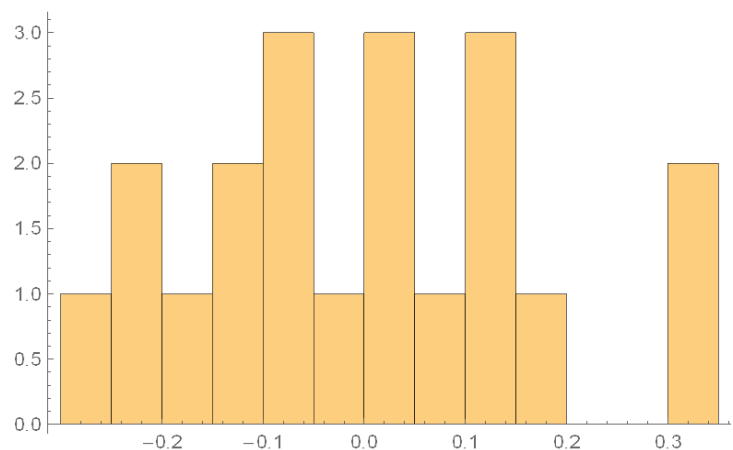}}
\caption{\label{1002t_4th} Distribution of average biases in the first 1000 primes for family $a_1=1$, $a_2=0$, $a_3=0$, $a_4=2$, $a_6=t$.}
\end{center}
\end{figure}

For the rank $0$ family $a_1=1$, $a_2=1$, $a_3=1$, $a_4=1$, $a_6=t$, $13$ of the $20$ groups of primes have shown negative biases. Figure \ref{1111t_4th} is a histogram plot of the distribution of the average biases among the 20 groups.
\begin{figure}[ht]
\begin{center}
\scalebox{0.7}{\includegraphics{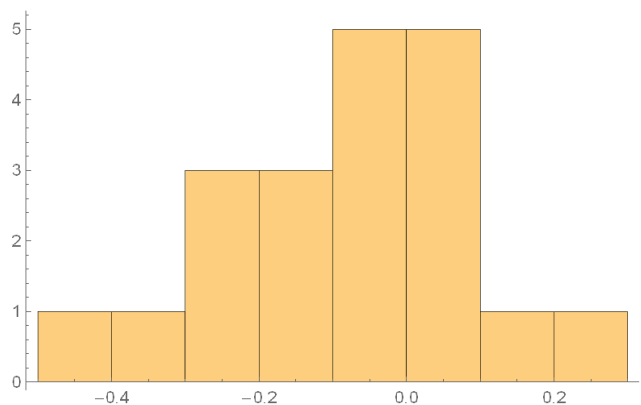}}
\caption{\label{1111t_4th} Distribution of average biases in the first 1000 primes for family $a_1=1$, $a_2=1$, $a_3=1$, $a_4=1$, $a_6=t$.}
\end{center}
\end{figure}

For the rank $1$ family $a_1=1$, $a_2=t$, $a_3=-1$, $a_4=-t-1$, $a_6=0$, $15$ of the $20$ groups of primes have shown negative biases. Figure \ref{1t-1-t-10_4th} is a histogram plot of the distribution of the average biases among the 20 groups.
\begin{figure}[ht]
\begin{center}
\scalebox{0.7}{\includegraphics{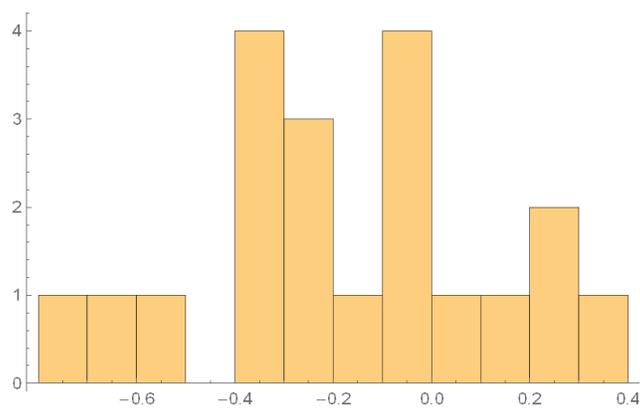}}
\caption{\label{1t-1-t-10_4th} Distribution of average biases in the first 1000 primes for family $a_1=1$, $a_2=t$, $a_3=-1$, $a_4=-t-1$, $a_6=0$.}
\end{center}
\end{figure}

We further analyze our data by dividing the 1000 primes into 100 groups of 10 for this family. As shown in Figure \ref{1t-1-t-10_4th_10primes}, $63$ of the $100$ groups of primes have shown negative biases. The probability of having 17 or more negatives than positives (or 17 or more positives than negatives) in 100 tosses of a fair coin (so heads is positive and tails is negative) is about 1.2\%. While unlikely, this is not exceptionally unlikely.

\begin{figure}[ht]
\begin{center}
\scalebox{1}{\includegraphics{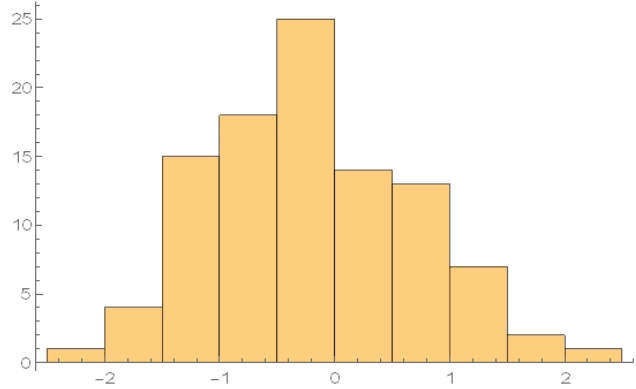}}
\caption{\label{1t-1-t-10_4th_10primes} Distribution of average biases in the first 1000 primes for family $a_1=1$, $a_2=t$, $a_3=-1$, $a_4=-t-1$, $a_6=0$.}
\end{center}
\end{figure}

For the rank $2$ family $a_1=1$, $a_2=t$, $a_3=-19$, $a_4=-t-1$, $a_6=0$, $7$ of the $20$ groups of primes have shown negative biases. Figure \ref{1t-19-t-10_4th} is a histogram plot of the distribution of the average biases among the 20 groups.
\begin{figure}[ht]
\begin{center}
\scalebox{0.7}{\includegraphics{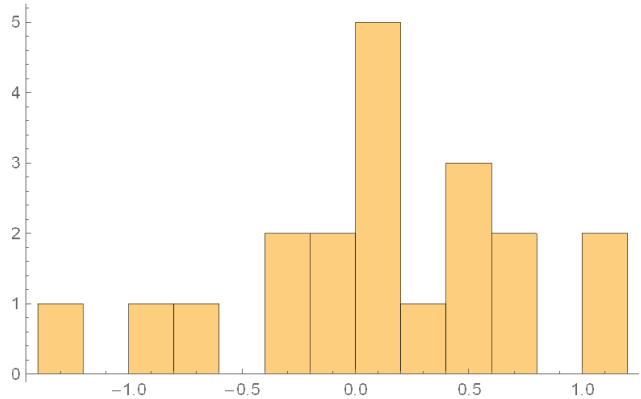}}
\caption{\label{1t-19-t-10_4th} Distribution of average biases in the first 1000 primes for family $a_1=1$, $a_2=t$, $a_3=-19$, $a_4=-t-1$, $a_6=0$.}
\end{center}
\end{figure}

We further analyzes our data by dividing the 1000 primes into 100 groups of 10 for this family. As shown in Figure \ref{1t-19-t-10_4th_10primes}, $44$ of the $100$ groups of primes have shown negative biases.
\begin{figure}[ht]
\begin{center}
\scalebox{1}{\includegraphics{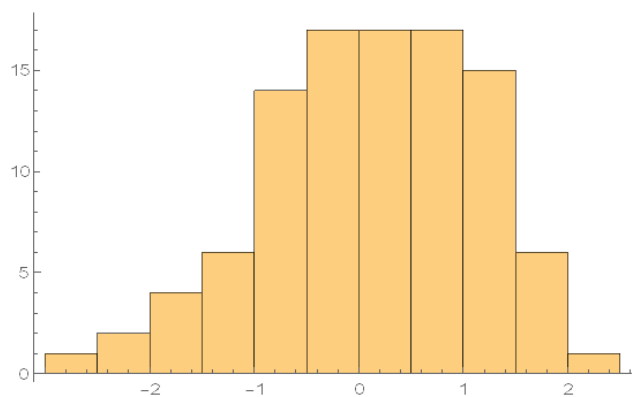}}
\caption{\label{1t-19-t-10_4th_10primes} Distribution of average biases in the first 1000 primes for family $a_1=1$, $a_2=t$, $a_3=-19$, $a_4=-t-1$, $a_6=0$.}
\end{center}
\end{figure}

For the rank $3$ family $a_1=0$, $a_2=5$, $a_3=0$, $a_4=-16t^2$, $a_6=64t^2$, $11$ of the $20$ groups of primes have shown negative biases. Figure \ref{050-16t_264t_2_4th} is a histogram plot of the distribution of the average biases among the 20 groups.
\begin{figure}[ht]
\begin{center}
\scalebox{0.75}{\includegraphics{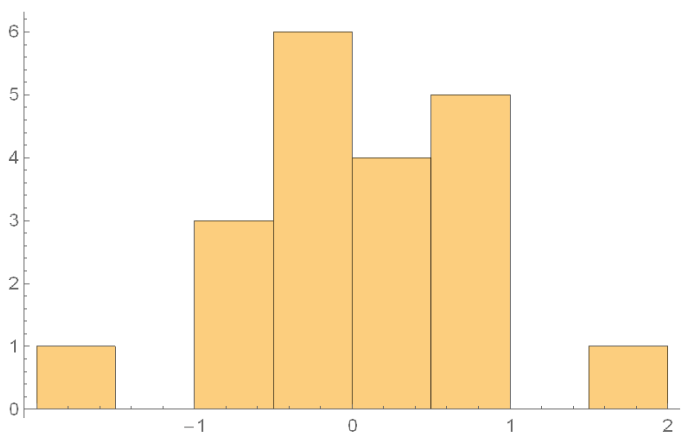}}
\caption{\label{050-16t_264t_2_4th} Distribution of average biases in the first 1000 primes for family $a_1=0$, $a_2=5$, $a_3=0$, $a_4=-16t^2$, $a_6=64t^2$.}
\end{center}
\end{figure}

Despite the fluctuations, all the rank $0$ and rank $1$ families seem to have negative biases more frequently in the first $1000$ primes, which suggests that it is possible that negative bias exists in the fourth moments of all rank $0$ and rank $1$ families.
Similar to the second moment sums, the fourth moment sums of families with larger rank appear to have positive biases for the first 1000 primes, but this might due to the fluctuations of the $p^{5/2}$ term as we are working with small data set.

To further examine the biases in families with larger ranks, we investigate the rank 6 family $a_1=0$, $a_2=2(16660111104 t) + 811365140824616222208$, $a_3=0$, $a_4=[2(-1603174809600)t-26497490347321493520384](t^2+2t-8916100448256000000+1)$, $a_6=[2(2149908480000)t+343107594345448813363200](t^2+2t-8916100448256000000+1)^2$. 
Our data suggests that there is a positive bias in this family. The average bias of the fourth moments sums for the first $1000$ primes is $0.753285$. Figure \ref{811365140824616222208_4th} is a histogram plot of the distribution of the average biases among the $100$ groups of $10$ primes. $75$ of the $100$ groups of primes have positive biases, which suggests that it is likely that the fourth moment of this family has a positive $p^{5/2}$ term. 

\begin{figure}[ht]
\begin{center}
\scalebox{0.9}{\includegraphics{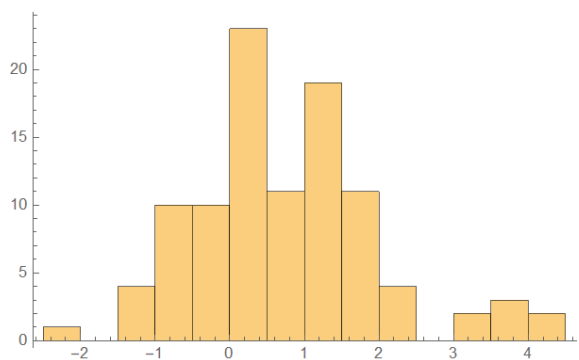}}
\caption{\label{811365140824616222208_4th} Distribution of average biases in the first 1000 primes for a rank 6 family.}
\end{center}
\end{figure}

Similar to the second moment, our data for the rank $4$ and rank $6$ families has shown that it is likely that higher rank families (${\rm rank}(E(\Q))\ge4$) have positive biases.

%%%%%%%%%%%%%%%%%%%%%%%%%%%%%%%%%%%%%%%%%%%%%%%%%%%%%%%%%%%%%%%%%%%%%%%%%%%%%%%%%%%%%%%%%%%%%%%%%%%%%%%%%%%%%%%%%%%%%%%%%%%%%%%%%%%%%%%%%%%%%%%%%%%%%%%%%%%%%%%%%%%%%%%%%%%%%%
%%%%%%%%%%%%%%%%%%%%%%%%%%%%%%%%%%%%%%%%%%%%%%%%%%
%%%%%%%%%%%%%%%%%%%%%%%%%%%%%%%%%%%%%%%%%%%%%%%%%%%%%%%%%%%%%%%%%%%%%%%%%%%%%%%%%%%%%%%%%%%%%%%%%%%%%%%%%%%%%%%%%%%%%%%%%%%%
\subsection{Biases in sixth moment sums}
We now explore the 6th moment biases for these families.

For the rank $0$ family $a_1=1$, $a_2=0$, $a_3=0$, $a_4=2$, $a_6=t$, $10$ of the $20$ groups of primes have shown negative biases. Figure \ref{1002t_6th} is a histogram plot of the distribution of the average biases among the 20 groups.
\begin{figure}[ht]
\begin{center}
\scalebox{0.75}{\includegraphics{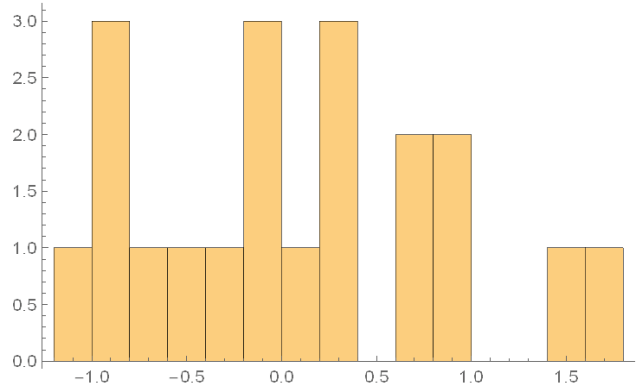}}
\caption{\label{1002t_6th} Distribution of average biases in the first 1000 primes for family $a_1=1$, $a_2=0$, $a_3=0$, $a_4=2$, $a_6=t$.}
\end{center}
\end{figure}

For the rank $0$ family $a_1=1$, $a_2=1$, $a_3=1$, $a_4=1$, $a_6=t$, $13$ of the $20$ groups of primes have shown negative biases. Figure \ref{1111t_6th} is a histogram plot of the distribution of the average biases among the 20 groups.
\begin{figure}[ht]
\begin{center}
\scalebox{0.75}{\includegraphics{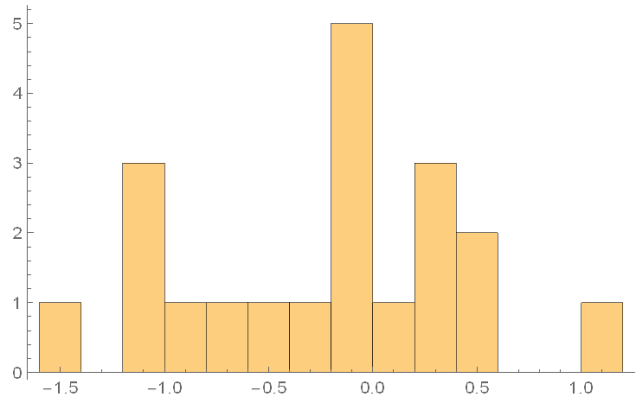}}
\caption{\label{1111t_6th} Distribution of average biases in the first 1000 primes for family $a_1=1$, $a_2=1$, $a_3=1$, $a_4=1$, $a_6=t$.}
\end{center}
\end{figure}

For the rank $1$ family $a_1=1$, $a_2=t$, $a_3=-1$, $a_4=-t-1$, $a_6=0$, $14$ of the $20$ groups of primes have shown negative biases. Figure \ref{1t-1-t-10_6th} is a histogram plot of the distribution of the average biases among the 20 groups.
\begin{figure}[ht]
\begin{center}
\scalebox{0.75}{\includegraphics{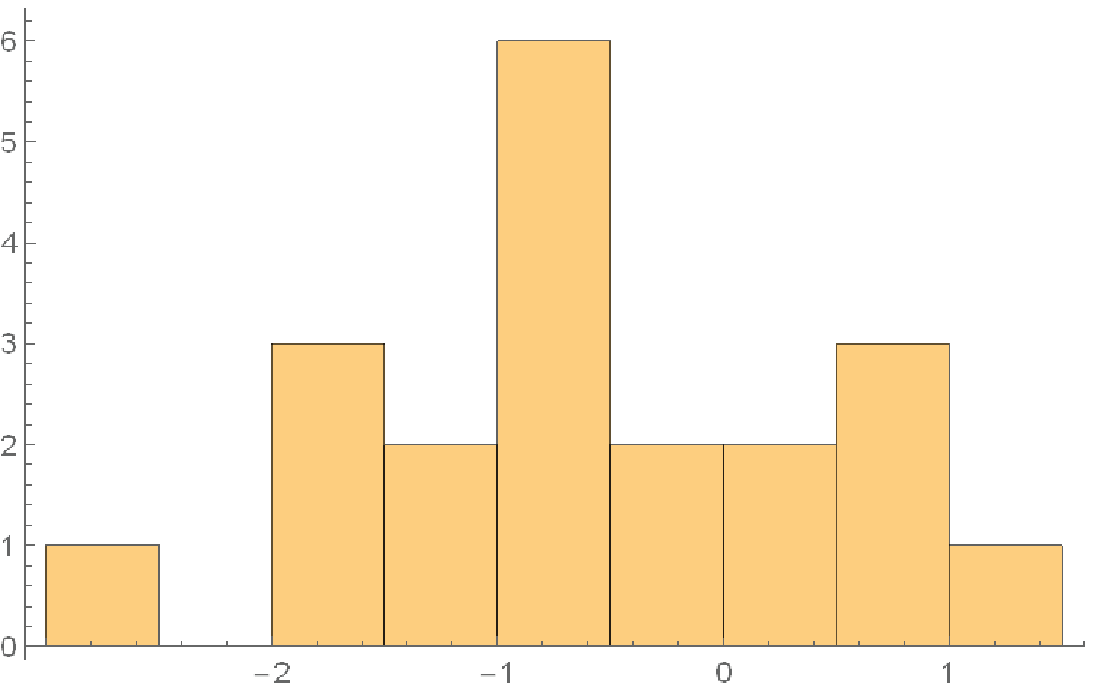}}
\caption{\label{1t-1-t-10_6th} Distribution of average biases in the first 1000 primes for family $a_1=1$, $a_2=t$, $a_3=-1$, $a_4=-t-1$, $a_6=0$.}
\end{center}
\end{figure}

We further analyze our data by dividing the 1000 primes into 100 groups of 10 for this family. As shown in Figure \ref{1t-1-t-10_6th_10primes}, $59$ of the $100$ groups of primes have shown negative biases.

\begin{figure}[ht]
\begin{center}
\scalebox{1}{\includegraphics{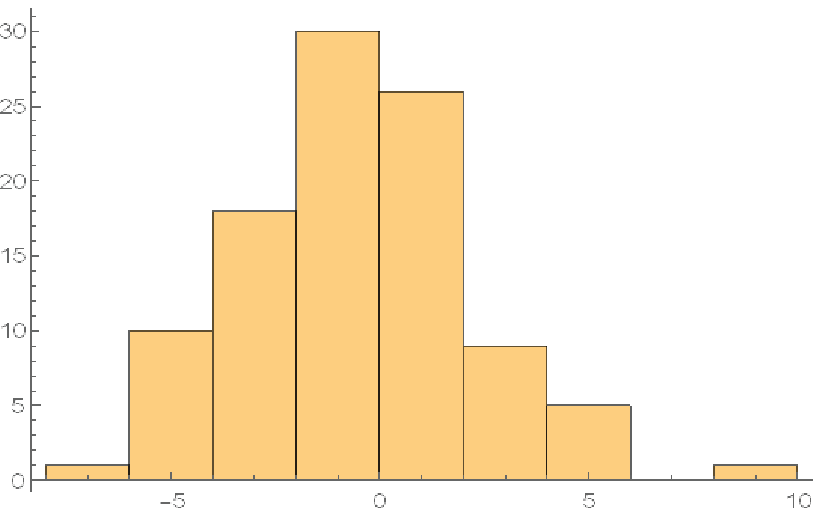}}
\caption{\label{1t-1-t-10_6th_10primes} Distribution of average biases in the first 1000 primes for family $a_1=1$, $a_2=t$, $a_3=-1$, $a_4=-t-1$, $a_6=0$.}
\end{center}
\end{figure}

For the rank $2$ family $a_1=1$, $a_2=t$, $a_3=-19$, $a_4=-t-1$, $a_6=0$, $10$ of the $20$ groups of primes have shown negative biases. Figure \ref{1t-19-t-10_6th} is a histogram plot of the distribution of the average biases among the 20 groups.
\begin{figure}[ht]
\begin{center}
\scalebox{0.75}{\includegraphics{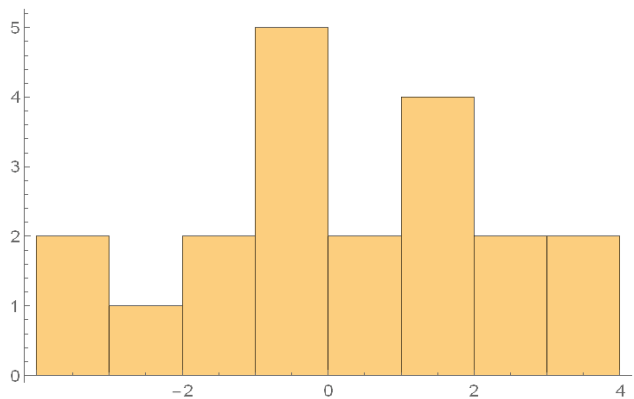}}
\caption{\label{1t-19-t-10_6th} Distribution of average biases in the first 1000 primes for family $a_1=1$, $a_2=t$, $a_3=-19$, $a_4=-t-1$, $a_6=0$.}
\end{center}
\end{figure}

For the rank $3$ family $a_1=0$, $a_2=5$, $a_3=0$, $a_4=-16t^2$, $a_6=64t^2$, $10$ of the $20$ groups of primes have shown negative biases. Figure \ref{050-16t_264t_2B} is a histogram plot of the distribution of the average biases among the 20 groups.
\begin{figure}[ht]
\begin{center}
\scalebox{0.75}{\includegraphics{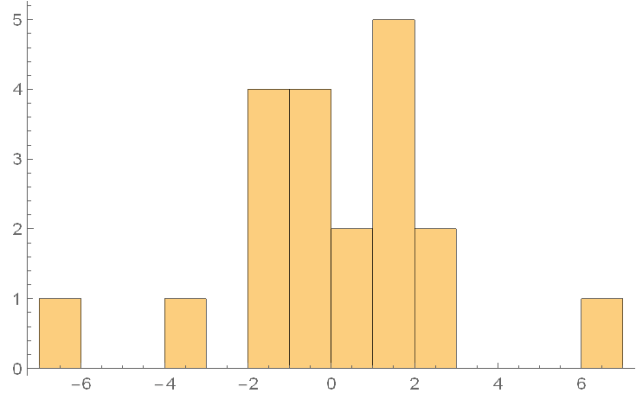}}
\caption{\label{050-16t_264t_2B} Distribution of average biases in the first 1000 primes for family $a_1=0$, $a_2=5$, $a_3=0$, $a_4=-16t^2$, $a_6=64t^2$.}
\end{center}
\end{figure}

As shown from the data, for most families, it is inconclusive whether the 6th moments have negative biases. For three of the five families, $50\%$ of the groups of primes have negative biases, which strongly suggests that the $p^{7/2}$ term averages to $0$, and the $p^3$ term is drowned out by the fluctuations of $p^{7/2}$ term.

For the family $a_1=1$, $a_2=t$, $a_3=-1$, $a_4=-t-1$, $a_6=0$ and family $a_1=1$, $a_2=1$, $a_3=-1$, $a_4=t$, $a_6=0$, it is likely that the $p^{7/2}$ term of their 6th moment sums  have negative biases(around $-0.5$ and $-0.6$ by Figure \ref{4_6moments}).

For the rank 6 family $a_1=0$, $a_2=2(16660111104 t) + 811365140824616222208$, $a_3=0$, $a_4=[2(-1603174809600)t-26497490347321493520384](t^2+2t-8916100448256000000+1)$, $a_6=[2(2149908480000)t+343107594345448813363200](t^2+2t-8916100448256000000+1)^2$, the average bias of the sixth moments sums for the first $1000$ primes is $2.26$. Figure \ref{811365140824616222208_6th} is a histogram plot of the distribution of the average biases among the $100$ groups of $10$ primes. $69$ of the $100$ groups of primes have positive biases, which suggests that it is likely that the sixth moment of this family has a positive $p^{7/2}$ term. 

\begin{figure}[ht]
\begin{center}
\scalebox{0.9}{\includegraphics{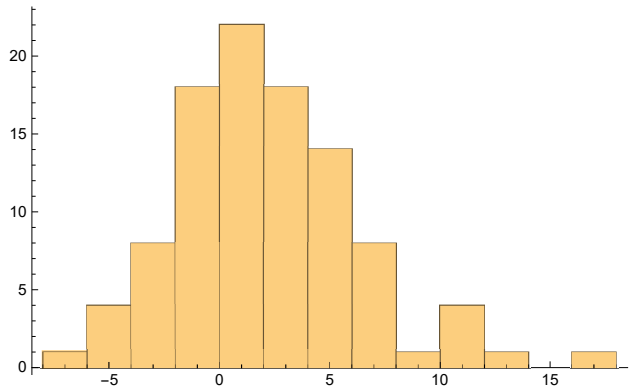}}
\caption{\label{811365140824616222208_6th} Distribution of average biases in the first 1000 primes for a rank 6 family.}
\end{center}
\end{figure}

To sum up, while we are not able to tell if the negative bias exists in the higher even moments, the data is at least consistent with the first lower order term averaging to zero or negative for families with smaller ranks. Thus our numerics support a weaker form of the bias conjecture: the first lower order term does not have a positive bias for smaller rank families. For families with ${\rm rank}(E(\Q))\ge4$, it is likely that the negative bias conjecture does not hold.

%%%%%%%%%%%%%%%%%%%%%%%%%%%%%%%%%%%%%%%%%%%%%%%%%%%%%%%%%%%%%%%%%%%%%%%%%%%%%%%%%%%%%%%%%%%%%%%%%%%%%%%%%%%%%%%%%%%%%%%%%%%%%%%%%%%%%%%%%%%%%%%%%%%%%%%%%%%%%%%%%%%%%%%%%%%%%%
%%%%%%%%%%%%%%%%%%%%%%%%%%%%%%%%%%%%%%%%%%%%%%%%%%%%%%%%%%%%%%%%%%%%%%%%%%%%%%%%%%%%%%%%%%%%%%%%%%%%%%%%%%%%%%%%%%%%%%%%%%%%%%%%%%%%%%%%%%%%%%%%%%%%%%%%%%%%%%%%%%%%%%%%%%%%%%

\section{Biases in the Third, Fifth, and Seventh Moments}
We now explore the third, fifth, and seventh moments of the Dirichlet coefficients of  elliptic curve $L$-functions. By the Philosophy of Square-Root Cancellation, $p$ times the third moment, $p$ times the fifth moment, and $p$ times the seventh moment  should have size $p^2$, $p^3$, $p^4$ respectively (we are multiplying by $p$ to remove the $1/p$ averaging). For example, the third moment is a sum of $p$ terms, each of size $\sqrt{p}^3$. Thus as these are signed quantities, we expect the size to be on the order of $\sqrt{p} \cdot p^{3/2}$ .

We believe that there are bounded functions $c_{E,3}(p)$, $c_{5,E}(p)$, and $c_{7,E}(p)$ such that
\be pA_{3,E}(p) \ = \ c_{3,E}(p) p^2+O(p^{3/2}), \ \ \ A_{5,E}(p) \ = \ c_{5,E}(p) p^3+O(p^{5/2}), \ \ \ A_{7,E}(p) \ = \ c_{7,E}(p) p^4+O(p^{7/2});\ee our data supports these conjectures. Unlike the second, fourth, and sixth moments, the coefficient of the leading term can vary with the prime in the third, fifth, and seventh moments. We calculated the average values of $c_{3,E}(p)$, $c_{5,E}(p)$, and $c_{7,E}(p)$ for each elliptic curve family by dividing the size of the main term ($p^2$ for third moment, $p^3$ for fifth moment, and $p^4$ for the seventh moment); see Figure \ref{3_5moments}.

\begin{figure}[ht]
\begin{center}
\scalebox{1.2}{\includegraphics{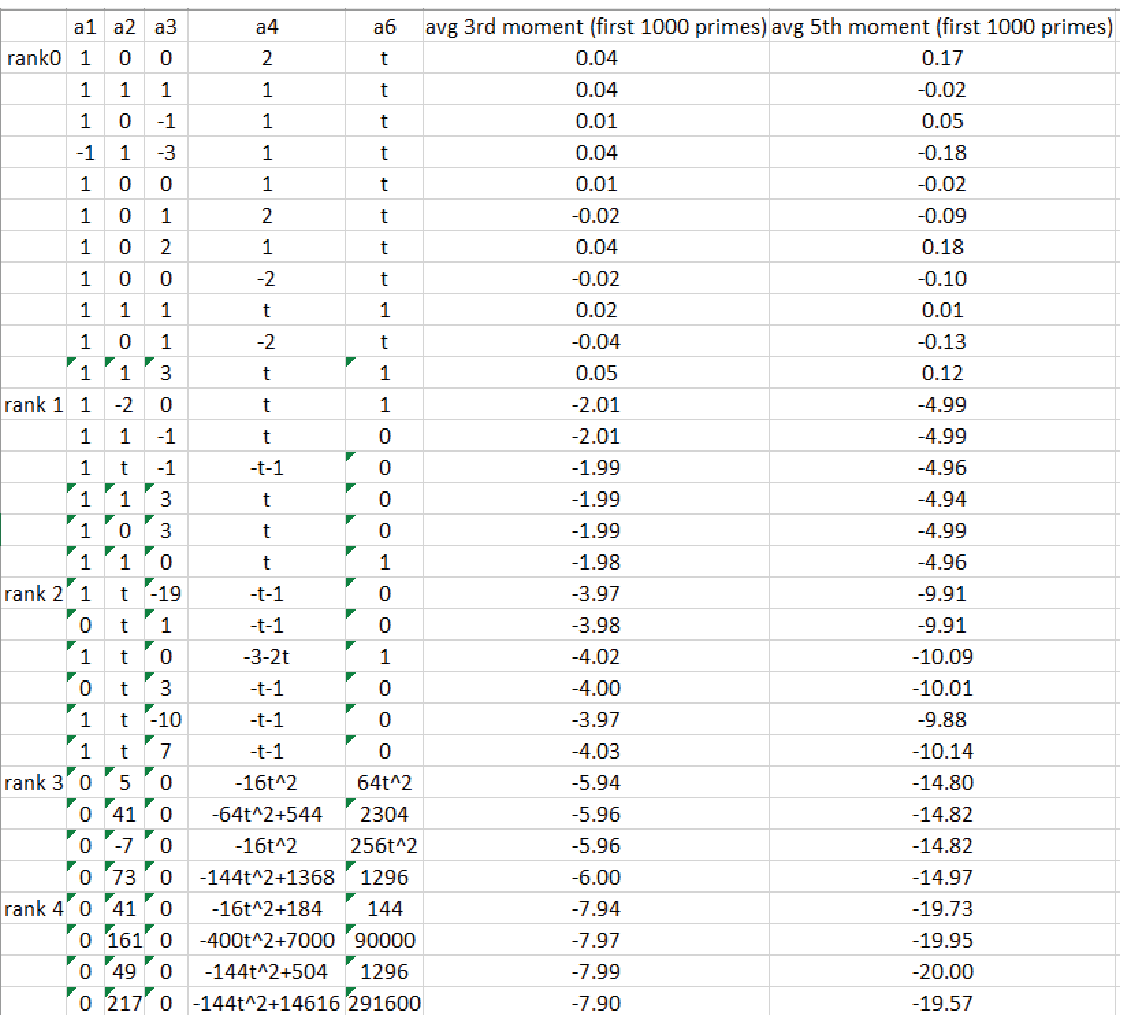}}
\caption{\label{3_5moments} Numerical data for the average constant for the main term of 3rd and 5th moments sums.}
\end{center}
\end{figure}

Our data suggests an interesting relationship between the average constant value for the main term and the rank of elliptic families for these odd moments.

\begin{conjecture}
The average value of the main term of the 3rd moment is $-2 {\rm rank}(E(\Q)) p^2$. 
\end{conjecture}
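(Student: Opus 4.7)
The plan is to decompose $a_t(p)^3$ via a Newton identity into a $\mathrm{Sym}^3$-trace piece plus a multiple of $a_t(p)$, use Rosen-Silverman on the latter to extract the rank, and reduce what remains to a cancellation estimate for the $\mathrm{Sym}^3$ sum.

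\textbf{Step 1 (Newton identity and summation).} At each $t$ of good reduction let $\alpha_t,\beta_t$ be the Frobenius eigenvalues of $E_t$, so $\alpha_t+\beta_t=a_t(p)$ and $\alpha_t\beta_t=p$. A direct expansion gives
$$\mathrm{tr}(\mathrm{Sym}^3\mathrm{Frob}_t)\ :=\ \alpha_t^3+\alpha_t^2\beta_t+\alpha_t\beta_t^2+\beta_t^3\ =\ a_t(p)^3-2p\,a_t(p),$$
so $a_t(p)^3=\mathrm{tr}(\mathrm{Sym}^3\mathrm{Frob}_t)+2p\,a_t(p)$. Summing over $t\bmod p$ and using $\sum_{t(p)} a_t(p)=p\,A_{1,E}(p)$,
$$p\,A_{3,E}(p)\ =\ \sum_{t(p)} a_t(p)^3\ =\ S_{3,E}(p)+2p^2\,A_{1,E}(p),\qquad S_{3,E}(p):=\sum_{t(p)}\mathrm{tr}(\mathrm{Sym}^3\mathrm{Frob}_t).$$
Dividing by $p^2$, the $p^2$-coefficient is $c_{3,E}(p)=2A_{1,E}(p)+S_{3,E}(p)/p^2$.

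\textbf{Step 2 (Rosen-Silverman contribution).} The prime-average of $A_{1,E}(p)$ equals $-\,\mathrm{rank}(E(\Q))$ (the ``rank signal'' of Rosen-Silverman/Nagao, up to the sign convention of the theorem as stated in the introduction). Hence $2A_{1,E}(p)$ contributes exactly $-2\,\mathrm{rank}(E(\Q))$ to the prime-average of $c_{3,E}(p)$, which is the conjectured main term.

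\textbf{Step 3 (the hard part: negligibility of $S_{3,E}$).} What remains is to establish $\langle S_{3,E}(p)/p^2\rangle=0$ in the same averaging sense. Deligne yields only $|\mathrm{tr}(\mathrm{Sym}^3\mathrm{Frob}_t)|\le 4p^{3/2}$, hence the trivial bound $|S_{3,E}(p)/p^2|=O(\sqrt p)$; what is actually needed is the sharper $S_{3,E}(p)=O(p^2)$ together with the $O(1)$ coefficient having average zero over primes. The natural input is Deligne's Weil II applied to the lisse sheaf $\mathrm{Sym}^3\mathcal{F}$ on the $t$-line (where $\mathcal{F}$ is the sheaf attached to the family), combined with absence of a pole at $s=1$ of the associated $\mathrm{Sym}^3$ $L$-function of the family --- the precise analog at the $\mathrm{Sym}^3$ level of the Tate-conjecture input that powers Rosen-Silverman at the $\mathrm{Sym}^1$ level. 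A direct character-sum attack is possible in principle but runs into cubic Legendre sums, for which (as the paper emphasizes) no closed form is known; this is the main obstacle. Granting this step, the same Newton-identity template decomposes $a_t(p)^{2k+1}$ in the basis of symmetric-power traces; extracting the coefficient of $\mathrm{Sym}^1$ (which is $C_{k+1}$) and invoking Rosen-Silverman yields the predicted Catalan main term $-C_{k+1}\,\mathrm{rank}(E(\Q))\,p^{k+1}$ for the $(2k+1)$st moment.
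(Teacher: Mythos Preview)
The statement is a \emph{conjecture}; the paper offers no proof, only numerical evidence together with the brief remark (end of the odd-moments section) that such an expansion ``would have terms related to the first moment (and hence by the Rosen-Silverman theorem the rank)'' but that ``there are other terms that arise now, due to the odd degree, that \dots\ we cannot control as easily.'' Your proposal is exactly that sketch made precise: the symmetric-power decomposition isolates the $2p\,a_t(p)$ piece (yielding $-2\,{\rm rank}$ via Rosen--Silverman) plus a ${\rm Sym}^3$ trace that must be shown negligible on average. So your approach and the paper's informal hint coincide.

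One refinement to your Step~3. You write that what is needed is ``the sharper $S_{3,E}(p)=O(p^2)$'' beyond the trivial Deligne bound. In fact this bound is already available and is used in the paper (Appendix~B.1): Michel's result $\sum_{t(p)}{\rm sym}_k(\theta_t(p))=O(\sqrt{p})$ for non-constant $j(T)$, combined with ${\rm tr}({\rm Sym}^3{\rm Frob}_t)=p^{3/2}{\rm sym}_3(\theta_t(p))$, immediately gives $S_{3,E}(p)=O(p^2)$. Thus $c_{3,E}(p)=2A_{1,E}(p)+O(1)$ is a theorem, and the \emph{entire} open content of the conjecture reduces to the second half of your Step~3: that the bounded quantity $p^{-1/2}\sum_{t(p)}{\rm sym}_3(\theta_t(p))$ has prime-average zero. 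You are right that this is the ${\rm Sym}^3$ analogue of the Tate-conjecture input powering Rosen--Silverman at the ${\rm Sym}^1$ level, and it remains open; but you should recognize that Michel already supplies the ``hard'' analytic bound you thought was missing, leaving only the arithmetic averaging statement. Your Catalan-number generalization via the ${\rm Sym}^1$ coefficient in the Chebyshev expansion of $(2\cos\theta)^{2k+1}$ is correct and matches the paper's Conjecture~4.4.
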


\begin{conjecture}
The average value of the main term of the 5th moment is $-5 {\rm rank}(E(\Q)) p^3$. 
\end{conjecture}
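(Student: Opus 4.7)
The plan is to reduce the fifth moment to the first moment by means of a polynomial identity in the Frobenius eigenvalues, and then apply Rosen--Silverman. Write the Frobenius polynomial of $E_t$ at a prime $p$ of good reduction as $(X - \alpha_t)(X - \bar\alpha_t)$, so that $\alpha_t\bar\alpha_t = p$ and $a_t(p) = \alpha_t + \bar\alpha_t$, and define $a_{\mathrm{Sym}^n E_t}(p) := \sum_{j=0}^n \alpha_t^{\,j}\bar\alpha_t^{\,n-j}$, the trace of Frobenius on the $n$-th symmetric power. The identity $\alpha_t\bar\alpha_t = p$ yields the Chebyshev-type recursion $a_t\cdot a_{\mathrm{Sym}^n E_t} = a_{\mathrm{Sym}^{n+1} E_t} + p\cdot a_{\mathrm{Sym}^{n-1} E_t}$. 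Iterating this (or equivalently writing $a_t = 2\sqrt{p}\cos\theta_t$ and using the Chebyshev expansion $\cos^5\theta_t = \tfrac{1}{16}(10\cos\theta_t + 5\cos 3\theta_t + \cos 5\theta_t)$) gives the key identity
\[
a_t(p)^5 \ =\ a_{\mathrm{Sym}^5 E_t}(p) \ +\ 4p\, a_{\mathrm{Sym}^3 E_t}(p) \ +\ 5p^2\, a_t(p).
\]
The coefficient of the $p^2 a_t$ term on the right is exactly the Catalan number $C_3 = 5$, which is the structural reason the conjecture predicts the factor $-5$.

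Summing this identity over $t \bmod p$ gives
\[
pA_{5,E}(p) \ =\ S_5(p) \ +\ 4p\, S_3(p) \ +\ 5p^2 \sum_{t(p)} a_t(p),
\]
where $S_n(p) := \sum_{t(p)} a_{\mathrm{Sym}^n E_t}(p)$. By Rosen--Silverman (in Nagao's form), $\sum_{t(p)} a_t(p)$ has a bias of $-\mathrm{rank}(E(\Q))\,p$ on prime-average, so the last summand contributes $-5\,\mathrm{rank}(E(\Q))\,p^3$ on average over primes, which is exactly the predicted main term. It therefore remains to show that $4p\, S_3(p) + S_5(p) = o(p^3)$ on prime-average.

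This last step is where the main difficulty lies. Heuristically it follows from horizontal Sato--Tate for the family: the Sato--Tate measure $\tfrac{2}{\pi}\sin^2\theta\, d\theta$ integrates the characters of $\mathrm{Sym}^n$ for $n \ge 1$ to zero, so $S_n(p)$ should carry no first-order bias and exhibit only square-root fluctuations. Rigorously, one needs analogues of Rosen--Silverman for the symmetric-power motives $\mathrm{Sym}^3 E$ and $\mathrm{Sym}^5 E$ over $\Q(T)$, asserting that the corresponding family ``rank'' vanishes for generic $E$ with non-constant $j$-invariant; these depend on Tate-type conjectures that are currently out of reach. A more tractable route for any specific $E$ is to expand each $a_{\mathrm{Sym}^n E_t}(p)$ as a polynomial in character sums of $x^3 + A(t)x + B(t) \bmod p$, switch the order of summation to sum over $t$ first, and apply Weil/Deligne bounds to the resulting $t$-sums. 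This is the direct generalization of the Legendre-sum technique used in the closed-form second-moment calculations earlier in the paper, now complicated by the appearance of cubic and higher Legendre sums in $t$ for which no closed form is known.
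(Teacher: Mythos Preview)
This statement is a \emph{conjecture} in the paper, not a theorem: the paper offers no proof, only numerical data (Figure~\ref{3_5moments}) together with the remark that expanding odd moments ``the same way as we did the fourth and sixth'' produces terms involving the first moment---hence the rank via Rosen--Silverman---plus ``other terms that arise now, due to the odd degree, that \dots\ we cannot control as easily.'' Your proposal makes that remark precise: your identity
\[
a_t(p)^5 \;=\; a_{\mathrm{Sym}^5 E_t}(p) \;+\; 4p\, a_{\mathrm{Sym}^3 E_t}(p) \;+\; 5p^2\, a_t(p)
\]
is exactly the Chebyshev/symmetric-power decomposition the paper alludes to, and the coefficient $5=C_3$ drops out cleanly, explaining the conjectured constant. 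So your approach is the natural one and is fully aligned with (indeed, more explicit than) what the paper sketches.

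You are also right that the whole difficulty is in the remainder $S_5(p)+4p\,S_3(p)$. Note that Michel's bound already in the paper, $\sum_{t(p)}{\rm sym}_k(\theta_t(p))=O(\sqrt{p})$, gives $S_n(p)=O(p^{(n+1)/2})$ pointwise, so $S_5(p)+4p\,S_3(p)=O(p^3)$: the same size as the conjectured main term. Thus nothing short of a \emph{prime-averaged} cancellation (a Rosen--Silverman/Nagao analogue for $\mathrm{Sym}^3$ and $\mathrm{Sym}^5$, as you say) will suffice, and you correctly flag this as conditional. Your write-up is an honest heuristic derivation rather than a proof, which is appropriate given that the paper itself leaves this as future work; just be explicit that the status remains conjectural.
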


\begin{figure}[ht]
\begin{center}
\scalebox{1.2}{\includegraphics{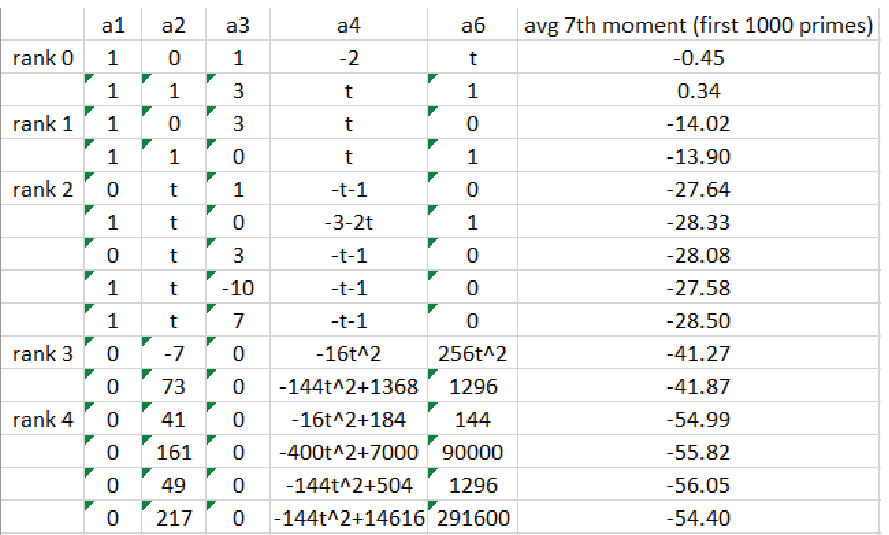}}
\caption{\label{7moments} Numerical data for the average constant for the main term of 7th moments sums.}
\end{center}
\end{figure}

\begin{conjecture}
The average value of the main term of the 7th moment is $-14 {\rm rank}(E(\Q)) p^4$.
\end{conjecture}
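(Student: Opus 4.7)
The plan is to mimic the structure of the (expected) proofs of the $3$rd-moment ($-2\,{\rm rank}(E(\Q))\,p^2$) and $5$th-moment ($-5\,{\rm rank}(E(\Q))\,p^3$) conjectures, and to show how the Catalan number $C_4 = 14$ emerges combinatorially for the $7$th moment. First I would derive the power-sum identity
\be
a_t(p)^7 \ =\ a_t(p^7) + 7p\,a_t(p^5) + 21 p^2\,a_t(p^3) + 35 p^3\,a_t(p),
\ee
which follows from $a_t(p^n) = \alpha_t^n + \beta_t^n$ with $\alpha_t + \beta_t = a_t(p)$ and $\alpha_t\beta_t = p$ (equivalently, by induction on $a_t(p^{n+1}) = a_t(p)\,a_t(p^n) - p\,a_t(p^{n-1})$); the coefficients $\binom{7}{j}$ are specializations of the general $a_t(p)^{2k+1} = \sum_{j=0}^{k}\binom{2k+1}{j}\,p^{\,j}\,a_t(p^{2k+1-2j})$. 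Setting $\sigma_n(p) := \sum_{t\bmod p} a_t(p^n)$ and summing over $t$ gives
\be
p\,A_{7,E}(p) \ =\ \sigma_7(p) + 7p\,\sigma_5(p) + 21 p^2\,\sigma_3(p) + 35 p^3\,\sigma_1(p).
\ee

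Next I would estimate each $\sigma_n$ via symmetric powers. Writing $a_t(p) = 2\sqrt{p}\cos\theta_t$, the Chebyshev identity $2\cos(n\theta) = U_n(\cos\theta) - U_{n-2}(\cos\theta)$ yields, for $n \geq 1$ (with $B_{-1}\equiv 0$),
\be
\sigma_n(p) \ =\ B_n(p) - p\,B_{n-2}(p), \qquad B_n(p) \ :=\ \sum_{t\bmod p} p^{n/2}\,U_n(\cos\theta_t),
\ee
where $B_n(p)$ is precisely the Nagao-type sum attached to the one-parameter family of symmetric powers ${\rm Sym}^n E_t$. The conjecture of Nagao -- proven by Rosen--Silverman for $n=1$ under the Tate conjecture, and in its natural extension to $n\ge 2$ -- predicts that $B_n(p)/p$, averaged over primes with the Rosen--Silverman weighting, tends to $-\,{\rm rank}({\rm Sym}^n E/\Q(T))$. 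For a generic one-parameter family this rank is $0$ for every $n\ge 2$, while $n=1$ contributes the geometric rank $r$. Granting these inputs, the leading rank-part of each $\sigma_n$ is
\be
\sigma_1(p) \sim -r\,p, \qquad \sigma_3(p) = B_3(p) - p\,B_1(p) \sim +r\,p^2,
\ee
while $\sigma_5(p)$ and $\sigma_7(p)$ carry no rank term of the maximal sizes $p^3$ and $p^4$.

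Substituting back, only $\sigma_1$ and $\sigma_3$ feed the main $p^4$ term of $pA_{7,E}(p)$:
\be
pA_{7,E}(p) \ \sim\ 21 p^2\cdot r p^2 \ +\ 35 p^3\cdot(-rp) \ =\ \Bigl(\binom{7}{2}-\binom{7}{3}\Bigr)\,r\,p^4 \ =\ -14\,r\,p^4,
\ee
which is the conjectured average. The Catalan number $C_4=14$ enters through the classical identity $\binom{2k+1}{k}-\binom{2k+1}{k-1}=C_{k+1}$, specialized to $k=3$; the same identity produces $C_2=2$ and $C_3=5$ for the $3$rd and $5$th moments, thereby explaining the uniform pattern $-C_{k+1}\,{\rm rank}(E(\Q))\,p^{k+1}$ conjectured for all $(2k+1)$-st moments.

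The principal obstacle is the generalized Nagao input in the middle step. Rosen--Silverman covers only $n=1$ and is itself conditional on the Tate conjecture; the estimates $B_n(p) = o(p^{(n+1)/2})$ (on average over primes) for $n = 3,5,7$ require (i) a Nagao-type theorem for each symmetric-power $L$-function $L(s,{\rm Sym}^n E)$, and (ii) the generic vanishing ${\rm rank}({\rm Sym}^n E/\Q(T))=0$ for $2\le n\le 7$. Both are widely expected but currently open in this generality. An unconditional approach would have to analyze $\sigma_n(p)$ directly as a multiple character sum over $t,x_1,\ldots,x_n\bmod p$, which produces cubic (and higher) Legendre sums with no closed form; as the paper emphasizes, this obstruction already blocks a direct approach for general families even at the second moment.
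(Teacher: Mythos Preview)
The paper does not prove this statement: it is presented purely as a \emph{conjecture}, motivated by the numerical data in Figure~\ref{7moments} and the observed Catalan pattern $2,5,14$ across the third, fifth, and seventh moments. The only analytical remark the paper makes is that one ``can try to analyze the third, fifth and seventh moments the same way as we did the fourth and sixth,'' obtaining expansions containing first-moment (rank) terms together with ``other terms \ldots\ due to the odd degree \ldots\ which we cannot control as easily''; the paper explicitly leaves this as a future project.

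Your proposal therefore goes substantially \emph{beyond} the paper: you carry out precisely the expansion the paper alludes to, via the Newton/power-sum identity $a_t(p)^7 = a_t(p^7)+7p\,a_t(p^5)+21p^2\,a_t(p^3)+35p^3\,a_t(p)$, and you identify the ``other terms we cannot control'' as the symmetric-power Nagao sums $B_n(p)$ for $n\ge 2$. Your combinatorics is correct, and the identification $\binom{7}{3}-\binom{7}{2}=C_4=14$ cleanly explains the Catalan pattern the paper observed only empirically. The crucial point, which you state honestly, is that the argument is \emph{conditional}: it requires (i) a Nagao--Rosen--Silverman type theorem for ${\rm Sym}^n E$ over $\Q(T)$ for $n\le 7$, and (ii) the generic vanishing of ${\rm rank}({\rm Sym}^n E/\Q(T))$ for $2\le n\le 7$. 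Neither is established in the paper (nor in the literature in this generality), so what you have is a well-structured heuristic, not a proof---which is consistent with the statement's status as a conjecture. In short: the paper offers no proof to compare against; your write-up is a plausible conditional derivation that makes explicit exactly the obstruction the paper leaves open.
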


\begin{conjecture}
Let $C_n$ be the n-th term of the Catalan numbers. For $k \in \mathbf{Z}^+$, the average value of the main term of the $2k+1$ th moment is $-C_{k+1} {\rm rank}(E(\Q)) p^{k+1}$.
\end{conjecture}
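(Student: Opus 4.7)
The plan is to expand $a_t(p)^{2k+1}$ as a linear combination of Dirichlet coefficients of symmetric-power $L$-functions attached to $E_t$, and then apply the Rosen--Silverman theorem to the piece coming from the first symmetric power (i.e., $a_t(p)$ itself), while arguing that the higher-symmetric-power pieces have vanishing average over primes. By Hasse's theorem, at primes of good reduction we may write $a_t(p) = 2\sqrt{p}\cos\theta_t$ for some $\theta_t\in[0,\pi]$; set $a_{t,m}(p) := p^{m/2}U_m(\cos\theta_t)$, where $U_m$ is the Chebyshev polynomial of the second kind. This $a_{t,m}(p)$ is the $p$-th Dirichlet coefficient of $L(s,\mathrm{Sym}^m E_t)$.

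First I would unpack $(2\cos\theta)^{2k+1}$ by the binomial theorem, pair $j$ with $2k+1-j$ to get a sum over cosines $\cos((2k+1-2j)\theta)$, and convert each cosine into Chebyshev $U_m$ via $2\cos(m\theta) = U_m(\cos\theta) - U_{m-2}(\cos\theta)$. The result is
\[(2\cos\theta)^{2k+1}\ =\ \sum_{\substack{m\text{ odd}\\ 1\le m\le 2k+1}}\gamma_m^{(k)}\,U_m(\cos\theta), \qquad \gamma_m^{(k)} \ =\ \binom{2k+1}{\tfrac{2k+1-m}{2}}-\binom{2k+1}{\tfrac{2k-1-m}{2}}.\]
In particular $\gamma_1^{(k)} = \binom{2k+1}{k}-\binom{2k+1}{k-1}$, and the identity $\binom{2k+1}{k}-\binom{2k+1}{k-1} = \frac{2}{k+2}\binom{2k+1}{k} = C_{k+1}$ identifies this coefficient with the Catalan number. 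This is the source of the Catalan factor in the conjecture.

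Multiplying through by $p^{(2k+1)/2}$ converts the identity into $a_t(p)^{2k+1} = \sum_m \gamma_m^{(k)}\,p^{(2k+1-m)/2}\,a_{t,m}(p)$. Summing over $t \bmod p$ and isolating the $m=1$ contribution yields
\[\frac{p\,A_{2k+1,E}(p)}{p^{k+1}}\ =\ C_{k+1}\,A_{1,E}(p)\ +\ \sum_{\substack{m\text{ odd}\\ 3\le m\le 2k+1}}\gamma_m^{(k)}\,p^{(1-m)/2}\,A_{1,E,m}(p),\]
where $A_{1,E,m}(p) := \tfrac1p \sum_{t\bmod p} a_{t,m}(p)$. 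The Rosen--Silverman theorem applied to the $m=1$ term gives a weighted average over primes equal to $-C_{k+1}\,{\rm rank}(E(\Q))$, which is exactly the conjectured coefficient of $p^{k+1}$.

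The hard part will be Step 4: showing that the $m\ge 3$ tail averages to zero. Deligne's bound $|a_{t,m}(p)|\le (m+1)p^{m/2}$ gives $A_{1,E,m}(p) = O(p^{m/2})$ trivially, and assuming square-root cancellation in the $t$-sum (as the Sato--Tate philosophy predicts for a generic one-parameter family) improves this to $O(p^{(m-1)/2})$, so each $m\ge 3$ summand is bounded (of size $O(1)$). To conclude that these bounded summands \emph{average} to zero, one needs a Rosen--Silverman/Nagao analog for the symmetric-power elliptic surface $\mathrm{Sym}^m E$ over $\Q(T)$: the weighted prime average of $A_{1,E,m}(p)$ should equal the rank of the Sym$^m$ surface, and that rank should vanish for $m\ge 2$ on generic families. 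Proving this unconditionally is a Tate-type conjecture for higher symmetric powers and is beyond present technology; however, conditional on such a statement the conjecture follows cleanly from Steps~1--3, and in particular gives a conceptual explanation for the appearance of $C_{k+1}$ as the unique contribution surviving the Rosen--Silverman average.
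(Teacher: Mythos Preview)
The paper does not prove this statement: it is posed purely as a conjecture, supported only by the numerical tables in Figures~\ref{3_5moments} and~\ref{7moments} together with a one-sentence heuristic (``We can try to analyze the third, fifth and seventh moments the same way as we did the fourth and sixth. In doing so, we would obtain expansions that do have terms related to the first moment\ldots; unfortunately there are other terms that arise now\ldots which we cannot control as easily. We thus leave a further study of these odd moments as a future project.''). So there is no proof in the paper to compare against.

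Your proposal goes considerably further than the paper does, and the core of it is correct. The Chebyshev expansion $(2\cos\theta)^{2k+1}=\sum_{m\text{ odd}}\gamma_m^{(k)}U_m(\cos\theta)$ with $\gamma_1^{(k)}=\binom{2k+1}{k}-\binom{2k+1}{k-1}=C_{k+1}$ is exactly right, and it gives a clean conceptual explanation for the Catalan constants $2,5,14,\dots$ that the paper only observed empirically. Your reduction of the $m=1$ piece to Rosen--Silverman is also correct and matches the paper's informal remark. Note too that the boundedness of each $m\ge 3$ summand does not require a separate square-root-cancellation assumption: Michel's estimate $\sum_{t(p)}\mathrm{sym}_m(\theta_t(p))=O(\sqrt{p})$, already quoted and used in the paper's Appendix~B, gives $p^{(1-m)/2}A_{1,E,m}(p)=O(1)$ directly.

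You are equally right that Step~4 is the genuine obstruction. Showing that the bounded $m\ge 3$ contributions \emph{average to zero over primes} is a Nagao/Tate-type statement for $\mathrm{Sym}^m E$ over $\Q(T)$, and nothing in the paper (or in the literature it cites) supplies this. Your proposal is therefore an honest conditional argument, not a proof, and you flag this accurately. In short: your write-up is a substantial upgrade over the paper's treatment of this conjecture, turning a purely numerical observation into a precise statement of what would need to be proved; just be explicit in the final version that the conclusion remains conditional on the vanishing of the higher symmetric-power averages.
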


We can try to analyze the third, fifth and seventh moments the same way as we did the fourth and sixth. In doing so, we would obtain expansions that do have terms related to the first moment (and hence by the Rosen-Silverman theorem the rank of the group of rational solutions); unfortunately there are other terms that arise now, due to the odd degree, that are not present in the even moments and which we cannot control as easily. We thus leave a further study of these odd moments as a future project.

%%%%%%%%%%%%%%%%%%%%%%%%%%%%%%%%%%%%%%%%%%%%%%%%%%%%%%%%%%%%%%%%%%%%%%%%%%%%%%%%%%%%%%%%%%%%%%%%%%%%%%%%%%%%%%%%%%%%%%%%%%%%%%%%%%%%%%%%%%%%%%%%%%%%%%%%%%%%%%%%%%%%%%%%%%%%%%
%%%%%%%%%%%%%%%%%%%%%%%%%%%%%%%%%%%%%%%%%%%%%%%%%%%%%%%%%%%%%%%%%%%%%%%%%%%%%%%%%%%%%%%%%%%%%%%%%%%%%%%%%%%%%%%%%%%%%%%%%%%%%%%%%%%%%%%%%%%%%%%%%%%%%%%%%%%%%%%%%%%%%%%%%%%%%%
\section{Future work}

Natural future questions are to continue investigating the second moment bias conjecture in more and more families, theoretically if possible, numerically otherwise. Since the bias in the second moments doesn't imply biases in higher moments, we can also explore whether there is a corresponding negative bias conjecture for the higher even moments. As these will involve quartic or higher in $t$ Legendre sums, it is unlikely that we will be able to  compute these in closed form, and thus will have to resort to analyzing data, or a new approach through algebraic geometry and cohomology theory (Michel proved that the lower order terms are related to cohomological quantities associated to the elliptic curve). 

Any numerical exploration will unfortunately be quite difficult in general, as there is often a term of size $p^{3/2}$ which we believe averages to zero for some families, but as it is $\sqrt{p}$ larger than the next lower order term, it completely drowns out that term and makes it hard to see the bias.

For the odd moments, our numerical explorations suggest that the bias in the first moment, which is responsible for the rank of the elliptic curve over $\Q(T)$, persists. A natural future project is to try to extend Michel's work to prove our conjectured main term formulas for the odd moments.

%%%%%%%%%%%%%%%%%%%%%%%%%%%%%%%%%%%%%%%%%%%%%%%%%%%%%%%%%%%%%%%%%%%%%%%%%%%%%%%%%%%%%%%%%%%%%%%%%%%%%%%%%%%%%%%%%%%%%%%%%%%%%%%%%%%%%%%%%%%%%%%%%%%%%%%%%%%%%%%%%%%%%%%%%%%%%%
%%%%%%%%%%%%%%%%%%%%%%%%%%%%%%%%%%%%%%%%%%%%%%%%%%%%%%%%%%%%%%%%%%%%%%%%%%%%%%%%%%%%%%%%%%%%%%%%%%%%%%%%%%%%%%%%%%%%%%%%%%%%%%%%%%%%%%%%%%%%%%%%%%%%%%%%%%%%%%%%%%%%%%%%%%%%%%
\section{Acknowledgement}
I would like to thank my mentor, Professor Steven J. Miller, for guiding me throughout the research process. Without his guidance, I would not have been able to learn the material and complete the research in this short period of time. 

I am also grateful to my parents and friends for their unwavering support. 

%%%%%%%%%%%%%%%%%%%%%%%%%%%%%%%%%%%%%%%%%%%%%%%%%%%%%%%%%%%%%%%%%%%%%%%%%%%%%%%%%%%%%%%%%%%%%%%%%%%%%%%%%%%%%%%%%%%%%%%%%%%%%%%%%%%%%%%%%%%%%%%%%%%%%%%%%%%%%%%%%%%%%%%%%%%%%%
%%%%%%%%%%%%%%%%%%%%%%%%%%%%%%%%%%%%%%%%%%%%%%%%%%%%%%%%%%%%%%%%%%%%%%%%%%%%%%%%%%%%%%%%%%%%%%%%%%%%%%%%%%%%%%%%%%%%%%%%%%%%%%%%%%%%%%%%%%%%%%%%%%%%%%%%%%%%%%%%%%%%%%%%%%%%%%
\section{Declaration of academic honesty}
I hereby confirm that the paper is the result of my own independent scholarly work under the guidance of the instructor, and that in all cases material from the work of others (in books, articles, essays, dissertations, and on the internet) is acknowledged, and quotations and paraphrases are clearly indicated. No material other than that listed has been used.

%%%%%%%%%%%%%%%%%%%%%%%%%%%%%%%%%%%%%%%%%%%%%%%%%%%%%%%%%%%%%%%%%%%%%%%%%%%%%%%%%%%%%%%%%%%%%%%%%%%%%%%%%%%%%%%%%%%%%%%%%%%%%%%%%%%%%%%%%%%%%%%%%%%%%%%%%%%%%%%%%%%%%%%%%%%%%%
%%%%%%%%%%%%%%%%%%%%%%%%%%%%%%%%%%%%%%%%%%%%%%%%%%%%%%%%%%%%%%%%%%%%%%%%%%%%%%%%%%%%%%%%%%%%%%%%%%%%%%%%%%%%%%%%%%%%%%%%%%%%%%%%%%%%%%%%%%%%%%%%%%%%%%%%%%%%%%%%%%%%%%%%%%%%%%
\appendix
\section{Linear and Quadratic Legendre Sums}\label{sec:linquadsums}
\setcounter{equation}{0}

The Dirichlet coefficients of elliptic curve $L$-functions can be written as cubic Legendre sums; while we do not have closed form expressions for these in general, we do for linear and quadratic sums. The proofs below are standard computations; see for example \cite{BEW, Mi1}. We include them for completeness.

%%%%%%%%%%%%%%%%%%%%%%%%%%%%%%%%%%%%%%%%%%%%%%%%%%%%%%%%%%%%%%%%%%%%%%%%%%%%%
%%%%%%%%%%%%%%%%%%%%%%%%%%%%%%%%%%%%%%%%%%%%%%%%%%%%%%%%%%%%%%%%%%%%%%%%%%%%%
\subsection{Linear Legendre Sums}

\begin{lemma}\label{lablegsumlinear} We have
\begin{equation}
S(n)\ :=\ \zsum{x} \js{ax+b} \ = \ \Bigg\{ {\ p\cdot \js{b} \ \
\mbox{{\rm if}} \ p \ | \ a \atop 0 \ \mbox{{\rm otherwise.}}}
\end{equation}
\end{lemma}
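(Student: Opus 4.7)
The plan is to split into the two cases of the statement and reduce each to a well-known count of quadratic residues modulo $p$.

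First I would dispatch the case $p \mid a$. Here $ax \equiv 0 \pmod p$ for every $x$, so $ax+b \equiv b \pmod p$, and each of the $p$ summands equals $\js{b}$. Summing gives $p \cdot \js{b}$ as claimed.

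Next, for the case $p \nmid a$, the key observation is that the map $x \mapsto ax+b$ is a bijection on $\mathbb{Z}/p\mathbb{Z}$ (since $a$ is invertible mod $p$ and translation by $b$ is invertible). Reindexing by $y = ax+b \bmod p$ therefore converts the sum into $\sum_{y=0}^{p-1} \js{y}$. It then remains to prove that this last sum vanishes. For this I would use the fact that the nonzero residues $\{1,2,\dots,p-1\}$ split evenly into $(p-1)/2$ quadratic residues (each contributing $+1$) and $(p-1)/2$ non-residues (each contributing $-1$), while $y=0$ contributes $\js{0}=0$; the positive and negative contributions cancel exactly.

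The only nontrivial ingredient is the equidistribution of quadratic residues and non-residues among $(\mathbb{Z}/p\mathbb{Z})^\ast$, which follows because squaring is a two-to-one map from $(\mathbb{Z}/p\mathbb{Z})^\ast$ onto the residues, so exactly half of the nonzero classes are squares. With that in hand, both cases of the lemma are immediate, and there is no real obstacle, just bookkeeping: the hardest step is making sure the re-indexing in the second case is phrased cleanly so that the range $0 \le x \le p-1$ maps bijectively to $0 \le y \le p-1$.
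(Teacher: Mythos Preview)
Your proof is correct and essentially identical to the paper's: both handle $p\mid a$ by noting every summand is $\js{b}$, and for $p\nmid a$ both reduce via a bijective change of variables to $\sum_{y=0}^{p-1}\js{y}=0$. The only cosmetic difference is that the paper substitutes $x\mapsto a^{-1}x$ and then shifts, while you reindex by $y=ax+b$ in one step; the content is the same.
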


\begin{proof} When $p\mid a$, we have
\begin{eqnarray}
S(n) &\ =\ & \zsum{x} \js{b} \ = \  p\cdot \js{b}. 
\end{eqnarray}
When $p\nmid a$, $\gcd(p,a)=1$ and we can send $x$ to $a^{-1}x$, which yields
\begin{eqnarray}
S(n) &\ = \ & \zsum{x} \js{x+b}  \ = \  \zsum{x} \js{x} \ = \ 0.
\end{eqnarray}
\end{proof}

%%%%%%%%%%%%%%%%%%%%%%%%%%%%%%%%%%%%%%%%%%%%%%%%%%%%%%%%%%%%%%%%%%%%%%%%%%%%%
%%%%%%%%%%%%%%%%%%%%%%%%%%%%%%%%%%%%%%%%%%%%%%%%%%%%%%%%%%%%%%%%%%%%%%%%%%%%%
\subsection{Factorizable Quadratics in Sums of Legendre Symbols}

Before analyzing the most general quadratic Legendre sum, we first do an important special case. The following proof is directly copied from \cite{Mi1}, and is included for the convenience of the reader.

\begin{lemma}\label{lablegsumsimplequadratic} For $p > 2$
\begin{equation}
S(n) = \zsum{x} \js{n_1 + x} \js{n_2 + x} = \Bigg\{ {\ p-1 \ \
\mbox{{\rm if}} \ p \ | \ n_1 - n_2 \atop -1 \ \mbox{{\rm otherwise.}}}
\end{equation}
\end{lemma}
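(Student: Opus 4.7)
The plan is to split into the two cases of the statement and reduce each to something manageable, using the Linear Legendre Sum lemma already established.

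First I would handle the easy case $p \mid n_1 - n_2$. In that case $n_2 \equiv n_1 \pmod p$, so $\js{n_1+x}\js{n_2+x} = \js{n_1+x}^2$. This equals $1$ when $p \nmid n_1+x$ and $0$ when $p \mid n_1+x$. As $x$ runs over residues mod $p$, exactly one value kills the symbol (namely $x \equiv -n_1$), so the sum is $p-1$.

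For the generic case $p \nmid n_1-n_2$, my plan is to translate and then invert. Shifting $x \mapsto x - n_1$ rewrites the sum as
\begin{equation}
S \ = \ \zsum{x} \js{x}\js{x+m}, \qquad m \ := \ n_2 - n_1 \not\equiv 0 \pmod p.
\end{equation}
The $x=0$ term contributes $0$, so I may restrict to $x \not\equiv 0$. For such $x$, I factor $x+m = x(1 + m x^{-1})$, which by multiplicativity of the Legendre symbol gives $\js{x}\js{x+m} = \js{x}^2\js{1+mx^{-1}} = \js{1+mx^{-1}}$. Substituting $y = x^{-1}$, which is a bijection on the nonzero residues mod $p$, yields
\begin{equation}
S \ = \ \sum_{y=1}^{p-1} \js{1+my} \ = \ \left(\zsum{y} \js{my+1}\right) - \js{1}.
\end{equation}
Since $p \nmid m$, the Linear Legendre Sum lemma (Lemma \ref{lablegsumlinear}) makes the parenthesized sum vanish, leaving $S = -1$.

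The two cases together give the stated piecewise formula. There is no real obstacle here: the whole proof hinges on the multiplicativity identity $\js{x}\js{x+m} = \js{1+mx^{-1}}$ on the invertible residues, and the rest is bookkeeping plus a single application of the linear sum lemma. The only step requiring a small amount of care is checking that isolating $x=0$ and using the substitution $y = x^{-1}$ legitimately converts the restricted sum into a full sum minus one term.
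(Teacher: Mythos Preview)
Your proof is correct, but it takes a different route from the paper's argument. Both proofs begin by shifting to reduce to the case $n_2=0$ and dispatch the degenerate case $p\mid n_1-n_2$ trivially. For the main case, however, the paper uses an \emph{averaging} trick: it shows $S(n)=S(an)$ for every unit $a$ via the substitution $x\mapsto a^{-1}x$, then averages over all nonzero $a$ and interchanges the order of summation, so that the inner $a$-sum becomes a linear Legendre sum and vanishes; the leftover correction from adjoining $a=0$ supplies the $-1$. Your argument is more direct: you factor $x+m = x(1+mx^{-1})$ on the invertible residues, cancel $\js{x}^2=1$, and then substitute $y=x^{-1}$ to turn the whole thing into a single linear Legendre sum minus the $y=0$ term. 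Your approach is shorter and invokes Lemma~\ref{lablegsumlinear} explicitly, while the paper's averaging argument is a standard technique that generalizes well to other character-sum identities; both ultimately rest on the vanishing of $\sum_y \js{my+1}$ when $p\nmid m$.
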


\begin{proof} Shifting $x$ by $-n_2$, we need only prove the lemma when
$n_2 = 0$. Assume $(n,p) = 1$ as otherwise the result is trivial.
For $(a,p) = 1$ we have
\begin{eqnarray}
S(n) &\ =\ & \sum_{x=0}^{p-1} \js{n + x} \js{x} \nonumber\\
     & = & \zsum{x} \js{n + a^{-1} x} \js{a^{-1} x} \nonumber\\
     & = & \zsum{x} \js{an + x} \js{x} = S(an).\end{eqnarray}

Hence
\begin{eqnarray}
S(n) &\ =\ & \frac{1}{p-1} \osum{a} \zsum{x} \js{an+x} \js{x} \nonumber\\
     & = & \frac{1}{p-1} \zsum{a} \zsum{x} \js{an+x} \js{x} -
     \frac{1}{p-1} \zsum{x} \js{x}^2 \nonumber\\
     & = & \frac{1}{p-1} \zsum{x} \js{x} \zsum{a} \js{an+x} - 1     \nonumber\\
     & = & 0 - 1 = -1.
\end{eqnarray}
\end{proof}

Where do we use $p > 2$? We used $\sum_{a=0}^{p-1} \js{an+x} = 0$
for $(n,p) = 1$. This is true for all odd primes (as there are
$\frac{p-1}{2}$ quadratic residues, $\frac{p-1}{2}$ non-residues,
and $0$); for $p=2$, there is one quadratic residue, no
non-residues, and $0$. As we never need to use this lemma for $p =
2$, this complication will not
affect any of our proofs.

%%%%%%%%%%%%%%%%%%%%%%%%%%%%%%%%%%%%%%%%%%%%%%%%%%%%%%%%%%%%%%%%%%%%%%%%%%%%%
%%%%%%%%%%%%%%%%%%%%%%%%%%%%%%%%%%%%%%%%%%%%%%%%%%%%%%%%%%%%%%%%%%%%%%%%%%%%%
\subsection{General Quadratics in Sums of Legendre Symbols}

The following proof is directly copied from \cite{Mi1}, and is included for the convenience of the reader.

\begin{lemma}\label{labquadlegsum} Assume
$a$ and $b$ are not both zero mod $p$ and $p > 2$. Then
\begin{equation}
\zsum{t} \js{at^2 + bt + c} = \Bigg\{ {(p-1)\js{a} \ {\rm if} \ p \ | \
b^2 - 4ac \atop -\js{a} \ {\rm otherwise.}}
\end{equation}
\end{lemma}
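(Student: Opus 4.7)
The plan is to reduce the sum to $\sum_u \js{u^2 - D}$ with $D = b^2 - 4ac$ and then evaluate that in two sub-cases. First I would dispose of $p \mid a$: the hypothesis that $a, b$ are not both zero mod $p$ forces $p \nmid b$, so Lemma \ref{lablegsumlinear} applied to $\sum_t \js{bt+c}$ gives $0$; since $\js{a} = 0$ as well, both sides of the claimed identity vanish and the case is trivial. So assume $p \nmid a$. Using $p > 2$ and the algebraic identity $4a(at^2 + bt + c) = (2at+b)^2 - D$, together with $\js{4a} = \js{a}$ and the bijection $u = 2at+b$ on $\mathbb{Z}/p\mathbb{Z}$, I would reduce the problem to proving
\[
\sum_{u=0}^{p-1} \js{u^2 - D} \ = \ \begin{cases} p-1 & \text{if } p \mid D, \\ -1 & \text{otherwise.} \end{cases}
\]

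The case $p \mid D$ is direct: $\js{u^2}$ equals $1$ for $u \not\equiv 0 \pmod p$ and $0$ otherwise, giving $p-1$. For the case $p \nmid D$, the quadratic $u^2 - D$ may not factor over $\mathbb{F}_p$ (specifically when $D$ is a non-residue), so Lemma \ref{lablegsumsimplequadratic} does not apply directly. My plan here is a double-counting argument. Let $N := \#\{(u,y) \in \mathbb{F}_p^2 : y^2 = u^2 - D\}$. Using $\#\{y : y^2 = z\} = 1 + \js{z}$ with the convention $\js{0} = 0$, summing over $u$ yields $N = p + \sum_u \js{u^2 - D}$. On the other hand, $y^2 - u^2 = -D$ rewrites as $(y-u)(y+u) = -D$; the change of variables $s = y-u$, $t = y+u$ is a bijection on $\mathbb{F}_p^2$ since $p \neq 2$, so $N$ equals the number of pairs $(s,t) \in \mathbb{F}_p^2$ with $st = -D$, which is $p - 1$ (each nonzero $s$ determines a unique $t = -D/s$, and $s = 0$ contributes nothing since $-D \neq 0$). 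Equating the two expressions gives $\sum_u \js{u^2 - D} = -1$, and combined with the earlier reduction this yields $-\js{a}$ as claimed.

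The main obstacle is precisely the $p \nmid D$ case: one cannot factor $u^2 - D$ linearly over $\mathbb{F}_p$ when $D$ is a non-residue, so the naive strategy of applying Lemma \ref{lablegsumsimplequadratic} fails uniformly. The key insight is that passing to the affine hyperbola $st = -D$ makes the point count manifestly independent of whether $-D$ is a square, which is exactly what is needed to cover both residue classes in one stroke.
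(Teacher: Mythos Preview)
Your argument is correct. The reduction to $\js{a}\sum_u \js{u^2-D}$ via the identity $4a(at^2+bt+c)=(2at+b)^2-D$ and the bijection $u=2at+b$ is essentially the same completing-the-square step the paper does (the paper pulls out $\js{a^{-1}}$ and works with $\delta=4^{-1}(b^2-4ac)$ instead of $D$, but this is cosmetic). The genuine divergence is in how the case $p\nmid D$ is handled. The paper splits further: when $\delta$ is a nonzero square it factors $t^2-\delta=(t-\eta)(t+\eta)$ and invokes Lemma~\ref{lablegsumsimplequadratic} directly; when $\delta$ is a non-square it first argues by a generator substitution that $\sum_t\js{t^2-\delta}$ takes a common value $S$ on all non-squares, and then recovers $S=-1$ from the double sum $\sum_\delta\sum_t\js{t^2-\delta}=0$. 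Your route sidesteps this dichotomy entirely: by counting points on the affine conic $y^2=u^2-D$ and diagonalizing to the hyperbola $st=-D$, you get $N=p-1$ regardless of the quadratic character of $D$, so a single computation covers both residue classes. This is a cleaner and more geometric argument than the paper's averaging trick, and it avoids any appeal to Lemma~\ref{lablegsumsimplequadratic}; the paper's approach, on the other hand, stays entirely within manipulations of Legendre symbols and makes the residue/non-residue symmetry explicit rather than implicit.
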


\begin{proof} Assume $a \not\equiv 0 (p)$ as otherwise the proof is
trivial. Let $\delta = 4^{-1}(b^2 - 4ac)$. Then
\begin{eqnarray}
\zsum{t} \js{at^2 + bt + c} & \ =\ & \zsum{t} \js{a^{-1}} \js{a^2 t^2
+ bat + ac} \nonumber\\ & = & \zsum{t} \js{a} \js{t^2 + bt + ac}
\nonumber\\ & = & \zsum{t} \js{a} \js{t^2 + bt + 4^{-1}b^2 + ac -
4^{-1} b^2} \nonumber\\ & = & \zsum{t} \js{a} \js{ (t + 2^{-1}b)^2
- 4^{-1}(b^2-4ac)} \nonumber\\ & = & \zsum{t} \js{a} \js{t^2 -
\delta} \nonumber\\ & = & \js{a} \zsum{t} \js{t^2 -\delta}.
\end{eqnarray}

If $\delta \equiv 0 (p)$ we get $p-1$. If $\delta = \eta^2, \eta
\neq 0$, then by Lemma \ref{lablegsumsimplequadratic}
\begin{equation}
\zsum{t} \js{t^2 - \delta} = \zsum{t} \js{t-\eta} \js{t+\eta} =
-1.
\end{equation}

We note that $\zsum{t} \js{t^2 - \delta}$ is the same for all
non-square $\delta$'s (let $g$ be a generator of the
multiplicative group, $\delta = g^{2k+1}$, change variables by $t
\to g^k t$). Denote this sum by $S$, the set of non-zero
squares by $\mathcal{R}$, and the non-squares by $\mathcal{N}$.
Since $\zsum{\delta} \js{t^2 - \delta} = 0$ we have

\begin{eqnarray}
\zsum{\delta} \zsum{t} \js{t^2 - \delta} & = & \zsum{t} \js{t^2} +
\sum_{\delta \in \mathcal{R}} \zsum{t} \js{t^2 - \delta} +
\sum_{\delta \in \mathcal{N}} \zsum{t} \js{t^2 - \delta}
\nonumber\\ & = & (p-1) + \frac{p-1}{2}(-1) + \frac{p-1}{2}S = 0
\end{eqnarray}

Hence $S = -1$, proving the lemma. \end{proof}

%%%%%%%%%%%%%%%%%%%%%%%%%%%%%%%%%%%%%%%%%%%%%%%%%%%%%%%%%%%%%%%%%%%%%%%%%%%%%%%%%%%%%%%%%%%%%%%%%%%%%%%%%%%%%%%%%%%%%%%%%%%%%%%%%%%%%%%%%%%%%%%%%%%%%%%%%%%%%%%%%%%%%%%%%%%%%%
%%%%%%%%%%%%%%%%%%%%%%%%%%%%%%%%%%%%%%%%%%%%%%%%%%%%%%%%%%%%%%%%%%%%%%%%%%%%%%%%%%%%%%%%%%%%%%%%%%%%%%%%%%%%%%%%%%%%%%%%%%%%%%%%%%%%%%%%%%%%%%%%%%%%%%%%%%%%%%%%%%%%%%%%%%%%%%
%%%%%%%%%%%%%%%%%%%%%%%%%%%%%%%%%%%%%%%%%%%%%%%%%%%%%%%%%%%%%%%%%%%%%%%%%%%%%%%%%%%%%%%%%%%%%%%%%%%%%%%%%%%%%%%%%%%%%%%%%%%%%%%%%%%%%%%%%%%%%%%%%%%%%%%%%%%%%%%%%%%%%%%%%%%%%%
%%%%%%%%%%%%%%%%%%%%%%%%%%%%%%%%%%%%%%%%%%%%%%%%%%%%%%%%%%%%%%%%%%%%%%%%%%%%%%%%%%%%%%%%%%%%%%%%%%%%%%%%%%%%%%%%%%%%%%%%%%%%%%%%%%%%%%%%%%%%%%%%%%%%%%%%%%%%%%%%%%%%%%%%%%%%%%

\section{Forms of 4th and 6th moments sums}

%%%%%%%%%%%%%%%%%%%%%%%%%%%%%%%%%%%%%%%%%%%%%%%%%%%%%%%%%%%%%%%%%%%%%%%%%%%%%
%%%%%%%%%%%%%%%%%%%%%%%%%%%%%%%%%%%%%%%%%%%%%%%%%%%%%%%%%%%%%%%%%%%%%%%%%%%%%
\subsection{Tools for higher moments calculations}

The Dirichlet Coefficients of the elliptic curve $L$-function can be written as\be a_t(p)\ = \ \sqrt{p}(e^{i\theta_t(p)}+e^{-i\theta_t(p)})\ = \ 2\sqrt{p}\cos (\theta_t(p)), \ee with $\theta_t(p)$ real; this expansion exists by Hasse's theorem, which states $|a_t(p)| \le 2 \sqrt{p}$. Define
\be {\rm sym}_k(\theta)\ := \ \frac{\sin((k+1)\theta)}{\sin \theta}.\ee  
By the angle addition formula for sine,
\be {\rm sym}_k(\theta)\ = \ {\rm sym}_{k-1}(\theta)\cos \theta+ \cos(k\theta) \ee
When $k=1$, we have
\be {\rm sym}_1(\theta)\ = \ 2\cos \theta \ee
Michel \cite{Mic} proved that
\be \sum_{t(p)} {\rm sym}_k(\theta_t(p))\ = \ O(\sqrt{p}), \ee where the big-Oh constant depends only on the elliptic curve and $k$; thus while we should have a $k$ subscript in the implied constant, as $k$ is fixed in our investigations we omit it for notational simplicity.

%%%%%%%%%%%%%%%%%%%%%%%%%%%%%%%%%%%%%%%%%%%%%%%%%%%%%%%%%%%%%%%%%%%%%%%%%%%%%
%%%%%%%%%%%%%%%%%%%%%%%%%%%%%%%%%%%%%%%%%%%%%%%%%%%%%%%%%%%%%%%%%%%%%%%%%%%%%
\subsection{Form of 4th moments sums}\label{4th_moment_form}

A lot is known about the moments of the $a_t(p)$ for a fixed elliptic curve $E_t$. However, as we are only concerned with averages over one-parameter families, we do not need to appeal to any results towards the Sato-Tate distribution, and instead we can directly prove convergence of the moments on average to the moments of the semicircle. In particular, the average of the $2m$-th moments has main term $\frac1{m+1}\ncr{2m}{m} p^{m-1}$. The coefficients $\frac1{m+1}\ncr{2m}{m}$ are the Catalan numbers, and the first few main terms of the even moments are $p, 2p^2, 5p^3$ and $14p^4$.

\begin{lemma} The average fourth moment of an elliptic surface with $j(T)$ non-constant has main term $2p^2$: \be \sum_{t(p)}{a_t}^4(p)\ = \  2p^3+O(p^\frac{5}{2}).\ee
\end{lemma}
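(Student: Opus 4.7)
The plan is to reduce the fourth moment to a linear combination of the Michel-type sums $\sum_{t(p)} {\rm sym}_k(\theta_t(p))$, each of which is known to be $O(\sqrt{p})$, plus an explicit main term coming from a constant contribution. The only genuinely new content is an algebraic identity in $\cos\theta$.

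First, by Hasse's theorem I write $a_t(p) = 2\sqrt{p}\cos\theta_t(p)$, so that
\begin{equation}
a_t(p)^4 \ = \ 16 p^2 \cos^4 \theta_t(p).
\end{equation}
Next, I would express $16\cos^4\theta$ in the basis $\{{\rm sym}_0,{\rm sym}_2,{\rm sym}_4\}$. Setting $z = e^{i\theta}$ gives the elementary identity ${\rm sym}_k(\theta) = z^k + z^{k-2} + \cdots + z^{-k}$ from $z^{k+1}-z^{-k-1} = (z-z^{-1})(z^k+z^{k-2}+\cdots+z^{-k})$, so in particular ${\rm sym}_0 = 1$, ${\rm sym}_2 = 1 + 2\cos 2\theta$, and ${\rm sym}_4 = 1 + 2\cos 2\theta + 2\cos 4\theta$. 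Expanding $(z+z^{-1})^4$ yields $16\cos^4\theta = 2\cos 4\theta + 8\cos 2\theta + 6$, and substituting $2\cos 4\theta = {\rm sym}_4 - {\rm sym}_2$ and $2\cos 2\theta = {\rm sym}_2 - 1$ gives
\begin{equation}
16\cos^4\theta \ = \ {\rm sym}_4(\theta) + 3\,{\rm sym}_2(\theta) + 2.
\end{equation}
(Equivalently, one can derive this from the paper's recursion ${\rm sym}_k(\theta) = {\rm sym}_{k-1}(\theta)\cos\theta + \cos(k\theta)$ applied iteratively.)

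Summing over $t \bmod p$ and multiplying by $p^2$,
\begin{equation}
\sum_{t(p)} a_t(p)^4 \ = \ p^2 \sum_{t(p)} {\rm sym}_4(\theta_t(p)) \ + \ 3p^2 \sum_{t(p)} {\rm sym}_2(\theta_t(p)) \ + \ 2p^2 \sum_{t(p)} 1.
\end{equation}
The last sum is exactly $2p^3$. For the first two sums I invoke Michel's bound $\sum_{t(p)} {\rm sym}_k(\theta_t(p)) = O(\sqrt{p})$ (valid since $j(T)$ is non-constant), which gives contributions of size $p^2 \cdot O(\sqrt{p}) = O(p^{5/2})$ each. Combining,
\begin{equation}
\sum_{t(p)} a_t(p)^4 \ = \ 2p^3 + O(p^{5/2}),
\end{equation}
as claimed.

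The decomposition into ${\rm sym}_k$ is routine trigonometry, and the error estimate is immediate from Michel; there is no real obstacle, only bookkeeping. If I wanted to track the implied constants I would need Michel's dependence on the degrees of $A(T), B(T)$ and on $k$, but since $k \le 4$ is fixed this is absorbed into the $O(\cdot)$. The same template will handle the sixth (and higher even) moments in Section 3: express $(2\cos\theta)^{2m}$ in the Chebyshev $U$-basis, isolate the constant term $\binom{2m}{m}-\binom{2m}{m-1}$ (the $m$-th Catalan number) which gives the main term $C_m p^{m+1}$, and bound the remaining ${\rm sym}_k$ sums by Michel.
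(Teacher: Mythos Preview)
Your proof is correct and rests on the same two inputs as the paper's: the Hasse parametrization $a_t(p)=2\sqrt{p}\cos\theta_t(p)$ and Michel's estimate $\sum_{t(p)}{\rm sym}_k(\theta_t(p))=O(\sqrt{p})$. The organization, however, is genuinely cleaner than the paper's.

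The paper expands $\cos^4\theta$ in the $\cos(k\theta)$ basis, then invokes the recursion $\cos(k\theta)={\rm sym}_k(\theta)-{\rm sym}_{k-1}(\theta)\cos\theta$ only for $k=4$. This introduces the cross-term ${\rm sym}_3(\theta)\cos\theta$, which the paper re-expands in powers of $\cos\theta$; the resulting $\cos^4\theta$ on the right is moved to the left and solved for, and the surviving $\cos^2\theta$ is handled by feeding in Michel's \emph{second}-moment theorem $\sum_t a_t(p)^2=p^2+O(p^{3/2})$ as a separate input. You instead go straight to the Chebyshev-$U$ basis, writing $16\cos^4\theta={\rm sym}_4+3\,{\rm sym}_2+2$ in one line and applying Michel's ${\rm sym}_k$ bound uniformly for $k=2,4$. (Your ${\rm sym}_2$ bound and the paper's second-moment input are of course the same statement, since ${\rm sym}_2=4\cos^2\theta-1=a_t(p)^2/p-1$.) Your route avoids the self-referential step, makes the Catalan constant $2=\binom{4}{2}-\binom{4}{1}$ appear transparently, and, as you note, extends verbatim to all even moments without the inductive bookkeeping the paper needs for the sixth moment.
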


\begin{proof} We have to compute
\be {a_t}^4(p)\ = \ 16p^2\cos^4 \theta_t(p). \ee We first collect some useful trigonometry identities:
\bea \cos(2\theta)&\ = \ &2\cos^2(\theta)-1\nonumber\\
\cos^2(\theta)&\ = \ &\frac{1}{2}\cos(2\theta)+\frac{1}{2}.\eea

We use these to re-write $\cos^4\theta$ in terms of quantities we can compute:
\bea \cos^4(\theta)&\ = \ &\frac{1}{4}\cos^2(2\theta)+\frac{1}{2}\cos(2\theta)+\frac{1}{4} \nonumber\\
&\ = \ &\frac{1}{8}\cos(4\theta)+\frac{1}{2}\cos(2\theta)+\frac{3}{8}\nonumber\\
&\ = \ &\frac{1}{8}[{\rm sym}_4(\theta)-{\rm sym}_3(\theta)\cos \theta]+\frac{1}{2}\cos(2\theta)+\frac{3}{8}.
\eea

The following expression will arise in our expansion, so we analyze it first:
\bea -\frac{1}{8}{\rm sym}_3(\theta)\cos \theta&\ = \ &-\frac{1}{8}\frac{\sin(4\theta)}{\sin \theta}\cos \theta \nonumber\\
 &\ = \ &-\frac{1}{8}\frac{2\sin(2\theta)\cos(2\theta)}{\sin \theta}\cos \theta \nonumber\\
 &\ = \ &-\frac{1}{8}\frac{2\cdot 2\sin \theta \cos \theta \cos(2\theta)}{\sin \theta}\cos \theta \nonumber\\
 &\ = \ &-\frac{1}{2}\cos^2 \theta \cos(2\theta)\nonumber\\
 &\ = \ &-\frac{1}{2}\cos^2 \theta (2\cos^2 \theta - 1)\nonumber\\
 &\ = \ &-\cos^4 \theta+\frac{1}{2}\cos^2 \theta \nonumber\\
16p^2\cdot \left(-\frac{1}{8}{\rm sym}_3(\theta)\cos \theta\right)&\ = \ &-16p^2\cos^4 \theta+8p^2\cos^2 \theta \nonumber\\
&\ = \ & -16p^2\cos^4 \theta+2p\cdot {a_t}^2(p).
\eea

Thus
\bea
16p^2\cos^4 \theta&\ = \ & 2p^2{\rm sym}_4\theta -16p^2\cos^4 \theta +2p\cdot {a_t}^2(p)+4p\cdot {a_t}^2(p)-2p^2\nonumber\\
2\cdot(16p^2\cos^4 \theta)&\ = \ & 2p^2{\rm sym}_4\theta+ 6p\cdot {a_t}^2(p)-2p^2\nonumber\\
\sum_{t(p)}(16p^2\cos^4 \theta)&\ = \ & p^2\sum_{t_(p)}{\rm sym}_4\theta+ 3p\sum_{t_(p)} {a_t}^2(p)-p^3\nonumber\\
\sum_{t(p)}{a_t}^4(p)&\ = \ &p^2\cdot O(\sqrt{p})+3p(p^2+O(p^\frac{3}{2}))-p^3 \nonumber\\
&\ = \ &2p^3+O(p^\frac{5}{2}),
\eea
as claimed. \end{proof}

%%%%%%%%%%%%%%%%%%%%%%%%%%%%%%%%%%%%%%%%%%%%%%%%%%%%%%%%%%%%%%%%%%%%%%%%%%%%%
%%%%%%%%%%%%%%%%%%%%%%%%%%%%%%%%%%%%%%%%%%%%%%%%%%%%%%%%%%%%%%%%%%%%%%%%%%%%%
\subsection{Form of 6th moments sums}\label{6th_moment_form}

\begin{lemma} The average sixth moment of an elliptic surface with $j(T)$ non-constant has main term $5p^3$: \be \sum_{t(p)}{a_t}^6(p)\ = \  5p^4+O(p^{\frac{7}{2}}).\ee
\end{lemma}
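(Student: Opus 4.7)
The plan is to mirror the proof of the fourth moment lemma. Writing $a_t(p) = 2\sqrt{p}\cos\theta_t(p)$ gives $a_t^6(p) = 64p^3\cos^6\theta_t(p)$, so it suffices to linearize $\cos^6\theta$ and then re-express the result in terms of the symmetric-power characters ${\rm sym}_k(\theta)$, each of which enjoys Michel's bound $\sum_{t(p)} {\rm sym}_k(\theta_t(p)) = O(\sqrt{p})$.

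First, three applications of $\cos^2\theta = (1+\cos 2\theta)/2$ yield the Fourier linearization
\begin{equation}
64\cos^6\theta \ = \ 20 + 30\cos(2\theta) + 12\cos(4\theta) + 2\cos(6\theta).
\end{equation}
Next I would convert each $\cos(2k\theta)$ into the ${\rm sym}_k$ basis using the identity $2\cos(2k\theta) = {\rm sym}_{2k}(\theta) - {\rm sym}_{2k-2}(\theta)$, which follows directly from the expansion ${\rm sym}_{2k}(\theta) = 1 + 2\sum_{j=1}^{k}\cos(2j\theta)$. Collecting coefficients produces
\begin{equation}
64\cos^6\theta \ = \ 5 + 9\,{\rm sym}_2(\theta) + 5\,{\rm sym}_4(\theta) + {\rm sym}_6(\theta),
\end{equation}
whose constant term $5$ is exactly the Catalan number $C_3 = \frac{1}{4}\binom{6}{3}$, consistent with the Sato--Tate philosophy that the $2m$-th moment has leading Catalan coefficient. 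Multiplying by $p^3$, summing over $t\bmod p$, and applying Michel's bound to each of the three symmetric-power terms then gives
\begin{equation}
\sum_{t(p)} a_t^6(p) \ = \ 5p^4 + (9+5+1)\,p^3 \cdot O(\sqrt{p}) \ = \ 5p^4 + O(p^{7/2}),
\end{equation}
as claimed.

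An alternative route, closer in spirit to the fourth-moment proof already in the paper, is to write $\cos(6\theta) = {\rm sym}_6(\theta) - {\rm sym}_5(\theta)\cos\theta$ and expand ${\rm sym}_5(\theta)\cos\theta$ as a polynomial in $\cos\theta$ of degrees two, four, and six; this produces a self-referencing identity that can be solved for $a_t^6(p)$ in terms of ${\rm sym}_6(\theta_t)$, $a_t^4(p)$, and $a_t^2(p)$. Substituting the already-proven main terms for the fourth and second moments recovers the same answer. The main obstacle in either derivation is arithmetic bookkeeping rather than new analytic input: the cascade from $\cos^6\theta$ through lower cosines has more terms than in the fourth-moment case, so one must carefully track coefficients to see that the Catalan constant $5$ emerges and that all oscillatory pieces reduce to ${\rm sym}_k$ summands covered by Michel's estimate.
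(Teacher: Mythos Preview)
Your proof is correct. Your primary route is actually cleaner than the paper's: you use the telescoping identity $2\cos(2k\theta)={\rm sym}_{2k}(\theta)-{\rm sym}_{2k-2}(\theta)$ to pass directly from the Fourier linearization of $64\cos^6\theta$ to a finite linear combination $5+9\,{\rm sym}_2+5\,{\rm sym}_4+{\rm sym}_6$, so Michel's bound can be applied termwise in a single stroke. The paper instead follows your ``alternative route'': it writes $\cos(6\theta)={\rm sym}_6(\theta)-{\rm sym}_5(\theta)\cos\theta$, expands ${\rm sym}_5(\theta)\cos\theta$ as a polynomial in $\cos\theta$, obtains a self-referencing identity for $64p^3\cos^6\theta$, and then feeds in the already-proved second and fourth moment formulas. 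Your direct approach avoids the recursion through lower moments entirely and makes the emergence of the Catalan constant $5$ transparent; the paper's approach has the minor virtue of displaying explicitly how the sixth moment decomposes in terms of the lower even moments, which is what one would want if tracking lower-order terms rather than just the main term.
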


\begin{proof} We have
\bea {a_t}^6(p) &\ = \ & 64p^3\cos^6 \theta_t(p) \nonumber\\
 \cos(3\theta)&\ = \ &4\cos^3 \theta-3\cos \theta \nonumber\\
\cos^3\theta&\ = \ &\frac{\cos(3\theta)+3\cos \theta}{4}.\eea

We first expand $\cos^6\theta$:
\bea
\cos^6\theta&\ = \ &\frac{\cos^2(3\theta)+9\cos^2\theta+6\cos\theta\cos(3\theta)}{16} \nonumber\\
&\ = \ &\frac{\frac{1}{2}\cos(6\theta)+\frac{1}{2}+9[\frac{1}{2}\cos(2\theta)+\frac{1}{2}]+6\cos\theta[4\cos^3\theta-3\cos\theta]}{16} \nonumber\\
&\ = \ &\frac{10+\cos(6\theta)+9\cos(2\theta)+48\cos^4\theta-36\cos^2\theta}{32} \nonumber\\
&\ = \ &\frac{10+\cos(6\theta)+9\cos(2\theta)+48[\frac{1}{8}\cos(4\theta)+\frac{1}{2}\cos(2\theta)+\frac{3}{8}]-36\cos^2\theta}{32} \nonumber\\
&\ = \ &\frac{10+\cos(6\theta)+9\cos(2\theta)+48[\frac{1}{8}\cos(4\theta)+\frac{1}{2}\cos(2\theta)+\frac{3}{8}]-18\cos(2\theta)-18}{32} \nonumber\\
&\ = \ &\frac{10+\cos(6\theta)+6\cos(4\theta)+15\cos(2\theta)}{32} \nonumber\\
&\ = \ &\frac{\cos(6\theta)}{32}+\frac{10+6\cos(4\theta)+15\cos(2\theta)}{32} \nonumber\\
&\ = \ &\frac{{\rm sym}_6(\theta)-{\rm sym}_5(\theta)\cos\theta}{32}+\frac{10+6\cos(4\theta)+15\cos(2\theta)}{32}.
\eea

Next we find a formula for the symmetric function that will appear:
\bea
-\frac{1}{32}{\rm sym}_5(\theta)\cos \theta &\ = \ & -\frac{1}{32} (\frac{\sin(6\theta)}{\sin \theta})\cos \theta\nonumber\\
&\ = \ & -\frac{1}{32}\cos \theta (\frac{3\sin(2\theta)-4\sin^3(2\theta)}{\sin \theta}) \nonumber\\
&\ = \ & -\frac{1}{32}\cos \theta (\frac{6\sin\theta\cos\theta-32\sin^3\theta\cos^3\theta}{\sin \theta})\nonumber\\
&\ = \ & -\frac{1}{32}\cos \theta [6\cos\theta-32\sin^2\theta\cos^3\theta] \nonumber\\
&\ = \ & -\frac{3}{16}\cos^2\theta +(1-\cos^2\theta)\cos^4\theta \nonumber\\
&\ = \ & -\frac{3}{16}\cos^2\theta +\cos^4\theta-\cos^6\theta \nonumber\\
64p^3(-\frac{1}{32}{\rm sym}_5(\theta)\cos \theta) &\ = \ &-12p^3\cos^2\theta+64p^3\cos^4\theta-64p^3\cos^6\theta \nonumber\\
&\ = \ &-64p^3\cos^6\theta+4p{a_t}^4(p)-3p^2{a_t}^2(p).
\eea
Thus
\bea
64p^3\cos^6\theta&\ = \ &2p^3{\rm sym}_6(\theta)-64p^3\cos^6\theta+4p{a_t}^4(p)-3p^2{a_t}^2(p)+12p^3\cos(4\theta)+30p^3\cos(2\theta)+20p^3\nonumber\\
64p^3\cos^6\theta&\ = \ &p^3{\rm sym}_6(\theta)+2p{a_t}^4(p)-\frac{3}{2}p^2{a_t}^2(p)+6p^3\cos(4\theta)+15p^3\cos(2\theta)+10p^3.
\eea

We can re-express some of the terms above in a more convenient form:
\bea
15p^3\cos(2\theta)&\ = \ &15p^3(2\cos^2\theta-1) \nonumber\\
&\ = \ &30p^3\cos^2\theta-15p^3 \nonumber\\
&\ = \ &\frac{15}{2}p^2{a_t}^2(p)-15p^3
\eea
and
\bea
6p^3\cos(4\theta)&\ = \ &6p^3[{\rm sym}_4(\theta)-{\rm sym}_3(\theta)\cos\theta] \nonumber\\
&\ = \ &6p^3[{\rm sym}_4(\theta)-8\cos^4\theta+4\cos^2\theta] \nonumber\\
&\ = \ &6p^3{\rm sym}_4(\theta)-3p{a_t}^4(p)+6p^2{a_t}^2(p).
\eea

Thus
\bea
64p^3\cos^6\theta & \ = \ & p^3{\rm sym}_6\left(\theta\right)+2p{a_t}^4\left(p\right)-\frac{3}{2}p^2{a_t}^2\left(p\right)+6p^3{\rm sym}_4\left(\theta\right) -3p{a_t}^4\left(p\right)+6p^2{a_t}^2\left(p\right)\nonumber\\ & & \ \ \ \ \ +\ \frac{15}{2}p^2{a_t}^2\left(p\right)-15p^3+10p^3 \nonumber\\
\sum_{t\left(p\right)}64p^3\cos^6\theta &\ = \  & \sum_{t\left(p\right)}[p^3{\rm sym}_6\left(\theta\right)+2p{a_t}^4\left(p\right)-\frac{3}{2}p^2{a_t}^2\left(p\right)+6p^3{\rm sym}_4\left(\theta\right)-3p{a_t}^4\left(p\right)+6p^2{a_t}^2\left(p\right)\nonumber\\ & & \ \ \ \ \ +\ \frac{15}{2}p^2{a_t}^2\left(p\right)-15p^3+10p^3].
\eea

Therefore
\bea
\sum_{t(p)}{a_t}^6(p)&\ = \ &p^3\sum_{t(p)}{\rm sym}_6(\theta)+6p^3\sum_{t(p)}{\rm sym}_4(\theta)-p\sum_{t(p)}{a_t}^4(p)+12p^2\sum_{t(p)}{a_t}^2(p)-5p^4 \nonumber\\
&\ = \ &p^3O(\sqrt{p})+6p^3O(\sqrt{p})-p(2p^3+O(p^{\frac{5}{2}}))+12p^2(p^2+O(p^{\frac{3}{2}}))-5p^4 \nonumber\\
&\ = \ &5p^4+O(p^{\frac{7}{2}}),
\eea completing the proof. \end{proof}

%%%%%%%%%%%%%%%%%%%%%%%%%%%%%%%%%%%%%%%%%%%%%%%%%%%%%%%%%%%%%%%%%%%%%%%%%%%%%%%%%%%%%%%%%%%%%%%%%%%%%%%%%%%%%%%%%%%%%%%%%%%%%%%%%%%%%%%%%%%%%%%%%%%%%%%%%%%%%%%%%%%%%%%%%%%%%%
%%%%%%%%%%%%%%%%%%%%%%%%%%%%%%%%%%%%%%%%%%%%%%%%%%%%%%%%%%%%%%%%%%%%%%%%%%%%%%%%%%%%%%%%%%%%%%%%%%%%%%%%%%%%%%%%%%%%%%%%%%%%%%%%%%%%%%%%%%%%%%%%%%%%%%%%%%%%%%%%%%%%%%%%%%%%%%
%%%%%%%%%%%%%%%%%%%%%%%%%%%%%%%%%%%%%%%%%%%%%%%%%%%%%%%%%%%%%%%%%%%%%%%%%%%%%%%%%%%%%%%%%%%%%%%%%%%%%%%%%%%%%%%%%%%%%%%%%%%%%%%%%%%%%%%%%%%%%%%%%%%%%%%%%%%%%%%%%%%%%%%%%%%%%%
%%%%%%%%%%%%%%%%%%%%%%%%%%%%%%%%%%%%%%%%%%%%%%%%%%%%%%%%%%%%%%%%%%%%%%%%%%%%%%%%%%%%%%%%%%%%%%%%%%%%%%%%%%%%%%%%%%%%%%%%%%%%%%%%%%%%%%%%%%%%%%%%%%%%%%%%%%%%%%%%%%%%%%%%%%%%%%

\section{Mathematical code}

%%%%%%%%%%%%%%%%%%%%%%%%%%%%%%%%%%%%%%%%%%%%%%%%%%%%%%%%%%%%%%%%%%%%%%%%%%%%%
%%%%%%%%%%%%%%%%%%%%%%%%%%%%%%%%%%%%%%%%%%%%%%%%%%%%%%%%%%%%%%%%%%%%%%%%%%%%%
\subsection{Finding formulas for second moment sums}
Below is an example of mathematica code to find the formulas for the elliptic curve family $y^2+xy+y=x^3+x^2+x+t$. It would take four to five days to run 1000 primes for a family on an old laptop which was devoted to this problem. To use one chooses the values of the input polynomials $a_1, a_2, a_3, a_4, a_6$ and the file name (which for convenience we take to be related to the coefficients).

\begin{lstlisting}
a1[t_]:= 1;
a2[t_]:= 1;
a3[t_]:= 1;
a4[t_]:= 1;
a6[t_]:= t;

sums[a1_, a2_, a3_, a4_, a6_, pstart_, pend_, powerof2_, powerof3_,
   familyname_] := Module[{},
   (* defines the polynomials we will use *)
   (* num moments is the number of moments we do *)
   (* pstart and pend is index of first and last prime studied *)
   (* we will find the formula for the second moment sum according to
the congruence classes of 2^powerof2*3^powerof3 *)
   b2[t_]:= a1[t]^2 + 4*a2[t];
   b4[t_]:= 2*a4[t] + a1[t]*a3[t];
   b6[t_]:= a3[t]^2 + 4*a6[t];
   c4[t_]:= b2[t]^2 - 24*b4[t];
   c6[t_]:= -b2[t]^3 + 36*b2[t]*b4[t] - 216*b6[t];

   (*primeton[n]=m, the mth prime is n*)
   numdo = 1000;
   For[n = 1, n <= Prime[numdo], n++, primeton[n] = 0];
   For[n = 1, n <= numdo, n++, primeton[Prime[n]] = n];

   (*creates lists to put different primes into different prime
congruence class*)

   For[m = 1, m <= 2^powerof2*3^powerof3, m++,
    {
     primeModgroup[m] = {};
     }];

   For[m = 3, m <= pend, m++,
    {
     p = Prime[m];
     j = Mod[p, 2^powerof2*3^powerof3];
     primeModgroup[j] = AppendTo[primeModgroup[j], p];
     }];

   (* initializes moment lists to empty *)
   moment = {};
   (* loops over the primes we study *)
   For[k = pstart, k <= pend, k++,
    {
     p = Prime[k];
     momenttemp = 0;
     For[t = 0, t <= p-1, t++,
      {
       aept = Sum[JacobiSymbol[x^3 - 27*c4[t]*x - 54*c6[t], p],
       {x, 0, p-1}];
       currentvalue = 1;
       currentvalue = currentvalue * aept*aept;
       momenttemp = momenttemp + currentvalue;
       }]; (* end of t loop to compute a_E(t)*)
     moment = AppendTo[moment, {p, momenttemp}];
     Print["Working with prime p = ", p, "."];
      Print["The second moment sum is ", moment[[k - pstart + 1, 2]]];
     Print[" "];
     }];
   savename = ToString[familyname];
   Print["We are saving the file to ", familyname];
   SetDirectory[" "];
   Put[savelist, savename];
   Put[{ moment[2]}, savename];
   (*Saving the data of second moment sums on the computer*)

   For[m = 1, m <= 2^powerof2*3^powerof3, m++,
    {
     If[Length[primeModgroup[m]] != 0, {
        If[Length[primeModgroup[m]] < 4,
         Print["prime mod" , 2^powerof2*3^powerof3, ":", m,
           "There are not enough primes"] && Continue[]];

        primeList = primeModgroup[m];
        For[i = 2, i <= Length[primeList], i++,
         {
          prime[i] = primeList[[i]];(*find the i-th prime*)
          nthPrime[i] = primeton[prime[i]];

          sum[i] = moment[[nthPrime[i] - pstart + 1, 2]]- (prime[i])^2;
          (*second moment sum=p^2+ap+b,sum[i]=ap+b*)
          }];
        a = (sum[3] - sum[2])/(prime[3] - prime[2]);
        b = sum[2] - a*prime[2];

        polywork = 1;
        For[i = 4, i <= Length[primeList], i++,
         {
          If[sum[i] != a*prime[i] + b,
            {
             polywork = 0;
             i = Length[primelist] + 100;
             }];
          }];(*end of if statement*)(*If polywork does not work for
          the i-th prime,set polywork to 0*)

        If[polywork == 1,
         Print["prime mod", 2^powerof2*3^powerof3, ":", m,
          "second moment sum=p^2 + (",a,") p + ",b],
         Print["prime mod" , 2^powerof2*3^powerof3, ":", m,
          "We can't find the formula."]];
        }];
     }];
   ]; (*end of module*)

sums[a1, a2, a3, a4, a6, 1, 440, 3, 3 , "1111t.dat"]
\end{lstlisting}
%%%%%%%%%%%%%%%%%%%%%%%%%%%%%%%%%%%%%%%%%%%%%%%%%%%%%%%%%%%%%%%%%%%%%%%%%%%
%%%%%%%%%%%%%%%%%%%%%%%%%%%%%%%%%%%%%%%%%%%%%%%%%%%%%%%%%%%%%%%%%%%%%%%%%%%

\end{document}